\date{}
\newtheorem{theorem}{Theorem}[section]
\newtheorem{lemma}[theorem]{Lemma}
\newtheorem{corollary}[theorem]{Corollary}
\newtheorem{proposition}[theorem]{Proposition}
\newtheorem{example}[theorem]{Example}
\newtheorem{remark}[theorem]{Remark}
\newcommand{\Exp}{{\rm I\!E}}
\begin{document}

\title{Hessian metrics, $CD(K,N)$-spaces, and optimal transportation of log-concave measures}

\begin{abstract}
We study the optimal transportation mapping
$\nabla \Phi : \mathbb{R}^d \mapsto \mathbb{R}^d$ pushing forward a  probability measure $\mu = e^{-V} \ dx$  onto another probability measure $\nu = e^{-W} \ dx$.
Following a  classical 
approach of E.~Calabi we introduce the Riemannian metric $g = D^2 \Phi$  on $\mathbb{R}^d$ and study  spectral properties of the metric-measure space
$M=(\mathbb{R}^d, g, \mu)$. 
  We prove, in particular, that $M$ admits a non-negative Bakry--{\'E}mery tensor  provided both $V$ and $W$ are convex.
If the target measure $\nu$ is the Lebesgue measure on a convex set $\Omega$ and $\mu$ is log-concave we prove that $M$ is a  $CD(K,N)$ space.
Applications  of these results include some global  dimension-free a priori estimates  of $\| D^2 \Phi\|$. With the help of comparison techniques on Riemannian manifolds and probabilistic concentration arguments
we proof some diameter estimates for $M$.
\end{abstract}

\keywords{Optimal transportation,  Monge--Amp{\`e}re  equation, Hessian manifolds, metric-measure space, Bakry--{\'E}mery tensor, Sobolev spaces, log-concave measures, convex geometry}

\author{Alexander V. Kolesnikov}

\maketitle

\section{Introduction}

This paper is motivated by the following problem. Given two probability measures  $\mu = e^{-V} \ dx$ and $\nu = e^{-W} \ dx$ on $\mathbb{R}^d$
let us consider the optimal transportation mapping $T=\nabla \Phi$ of $\mu$ onto $\nu$
and the associated Monge--Amp{\`e}re equation
\begin{equation}
\label{ma}
e^{-V} = e^{-W(\nabla \Phi)} \det D^2 \Phi.
\end{equation}
We are interested in efficient estimates of the Lipschitz constant
$\sup_{\mathbb{R}^d} \| D^2 \Phi\|$ or the  integral Lipschitz constant 
$ \bigl(\int_{\mathbb{R}^d} \| D^2 \Phi\|^{p}\ d \mu\bigr)^{\frac{1}{p}}$ (here $\|\cdot\|$ is the operator norm) for some $p \ge 1$.

This problem has different aspects. From  the regularity theory viewpoint the ''best'' estimate provides the  highest regularity level for $\Phi$ 
for any given regularity assumptions on $V$ and $W$. Classical regularity results  provide  $\Phi \in C^{2,\alpha}$ under  assumption of the H{\"o}lder continuity of  $V$ and $W$. 
The results of this type
are usually available only in finite dimensions and involve constants which are very hard to control. For an account in the regularity theory of the optimal transportation and the
 Monge--Amp{\`e}re equation
(flat case) see  \cite{Guit}, \cite{TrudW}   (see also \cite{Bak}, \cite{Pog}, \cite{GT},  \cite{CafCab}, \cite{Kryl2},  \cite{Caf-90}, \cite{CNS}).

Our motivation partially comes from  the optimal transportation theory on the infinite-dimensional spaces, in particular,
on the Wiener space (\cite{BoKo2005}, \cite{BoKo2011}, \cite{FU1}, \cite{Kol04}). Note that the finite-dimensional regularity  techniques can not be applied
 here and the general regularity problem for optimal transportation on the Wiener space is open.  Some partial results see in \cite{BoKo2011}.

Another type of problems  is studied in convex geometry. Given the target measure $\nu$ (a typical example: $\nu$ is the normalized Lebesgue measure on a convex set)
find a "nice" source measure  $\mu$ with known spectral properties (say, known Poincar{\'e} constant)  and small $L^p(\mu)$-norm of $\|D^2 \Phi\|$. A classical example is given by a  Caffarelli's contraction theorem.
 According to this result  every optimal transportation mapping $\nabla \Phi$ pushing forward the standard Gaussian measure onto  a log-concave measure $\nu$ with the uniformly convex potential 
$W$ (i.e. $D^2 W \ge K \cdot \mbox{Id}$  with $K>0$)
 is a $\frac{1}{\sqrt{K}}$ - contraction (i.e. $\|D^2 \Phi\|_{Lip} \le \frac{1}{\sqrt{K}}$).

This result implies immediately very nice analytical consequences (for instance, the isoperimetric comparison Bakry--Ledoux theorem, a probabilistic version of the L{\'e}vy-Gromov
comparison theorem). More about it see in  \cite{Koles2011}. 
Note that for many applications a dimension-free bound of the integral norm $ \bigl(\int_{\mathbb{R}^d} \| D^2 \Phi\|^{p}\ d \mu\bigr)^{\frac{1}{p}}, p \ge 1$ 
would be sufficient.
This follows  from  a recent result of Emanuel Milman (see \cite{Milman2008}) 
on equivalence of norms in the log-concave case.

Let us recall  a related  open problem.
A convex set $\Omega$ is called isotropic if $$\Exp (x_i)=0,  \ \ \Exp (x_i x_j) =\delta_{ij}$$ (here $\Exp$ means the expectation with respect to the normalized Lebesgue volume on $\Omega$).
The Poincar{\'e} constant is the minimal constant $c_p$ such that
$$
\Exp f^2 - (\Exp f)^2 \le c_p \cdot \Exp |\nabla f|^2 
$$
for any smooth $f$.
 According to the famous Kannan, Lov{\'a}sz, and Simonovits conjecture (KLS conjecture)
the Poincar{\'e} constant of any isotropic convex set is  bounded by some universal number.
This is one of the most difficult open problems naturally arising in convex geometry (KLS conjecture, slicing problem, thin-shell conjecture). 

By a  classical result of Payne and Weinberger (see \cite{PW})  
$$
c_p \le \frac{ \mbox{\rm{diam}}^2 (\Omega)}{\pi^2}
$$
for any convex $\Omega$.
Thus in view of the Caffarelli's theorem it is natural to expect that the Lipschitz constant of the optimal transportation mapping
pushing forward, say, the standard Gaussian measure $\gamma$ onto a convex set $\Omega$ is controlled at least by the diameter of $\Omega$.
However, even this turns out to be difficult to prove. According to \cite{Kol} this Lipschitz constant is controlled by 
$\sqrt{d} \ \mbox{diam}(\Omega)$. It is still not known whether  $\int \| D^2 \Phi \| \ d \gamma \le C \ \mbox{diam}(\Omega)$ for some universal $C$.

We prove in this paper that
\begin{equation}
\label{main-appl}
\int \Lambda \ d \mu - \Bigl( \int \sqrt{\Lambda} \ d \mu \Bigr)^2   \le c \ \mbox{diam}(\Omega), 
\end{equation}
where $\Lambda(x) = \| D^2 \Phi(x)\|$, $\mu=\gamma$ is the standard Gaussian measure, $c$ is  universal, and  $\nu$ is the normalized Lebesgue measure on $\Omega$.

We apply here a geometric approach developed by E.~Calabi  in \cite{Calabi}
in his study of the regularity problem for the Monge--Amp{\`e}re equation. The sharpest regularity results  for the Monge--Amp{\`e}re equation have been obtained later 
by other methods, including the Krylov-Safonov-Evans regularity theory.
Let us cite Nikolai Krylov (see \cite{Kryl2}). 
{\it ''To prove the existence of solutions of equations like (\ref{ma}) by the methods known before 1981 was no easy task. It involved finding a priori estimates for solutions and their derivatives up 
to {\bf third} order. Big part of the work is based on differentiation (\ref{ma}) three times and on certain extremely cleverly organized manipulations invented by Calabi. After 1981 the approach to 
fully nonlinear equations changed dramatically.''}
Unfortunately, the deep techniques  developed by Krylov, Safonov,  Evans, Caffarelli and others 
don't work in applications where some {\bf dimension-free} bounds needed.
These applications include convex geometry and  analysis on Wiener space. This is the main reason why we come back to the old Calabi's trick.

Let us explain the main idea of this approach. Differentiating (\ref{ma}) along a vector $e$ one obtains the following quasilinear diffusion equation
$$
\partial_e V = - L_{\Phi} \Bigl( {\partial_e} \Phi \Bigr),
$$
where
$L_{\Phi}  f  =    \mbox{div}_{\nu} \Bigl( \nabla  f ( \nabla \Psi) \Bigr) \circ (\nabla \Phi)$ and $\Psi =\Phi^*$ is the Legendre transform of $\Phi$.
This diffusion operator is a generator  of the following symmetric Dirichlet form:
$$
\mathcal{E}_{\Phi} (f,h) = 
\int \bigl< (D^2 \Phi)^{-1} \nabla f, \nabla h \bigr> \ d\mu.
$$ 
It is natural (and it was the Calabi's observation) to introduce the following Riemannian metric 
on $\mathbb{R}^d$: $g_{ij} = \Phi_{x_i x_j}$ (in the fixed initial coordinate system). The corresponding manifold will be denoted by $M$. The 
manifolds of this type are called "Hessian manifolds" and they are real analogs of the complex K{\"a}hler manifolds.
More on Hessian manifolds see in \cite{Ch-Yau}, \cite{Shima}.

The Dirichlet form  $\mathcal{E}_{\Phi}$ can be rewritten as follows:
$$
\mathcal{E}_{\Phi} (f,h) = \int \bigl<  \nabla_M f, \nabla_M h \bigr>_M \ d\mu,
$$
where $\nabla_M$ is the gradient on $M$. Note that $\mu$ is {\bf not} the Riemannian volume $\mbox{vol}_M$ measure of $M$. In fact
$$
\mathcal{E}_{\Phi} (f,h) = \int \bigl<  \nabla_M f, \nabla_M h \bigr>_M \ e^{-P} \ d  {\rm vol}_M, 
$$
where $P = \frac{1}{2} \bigl( V + W(\nabla \Phi) \bigr)$. We introduce the {\it metric-measure space }
$(\mathbb{R}^d, g,  \mu)$  denoted by the same letter $M$.

The diffusion generator  can be rewritten in geometric terms $$L_{\Phi} = \Delta_M - \nabla_M P \cdot \nabla_M.$$

We note that the inverse optimal mapping $\nabla \Psi = (\nabla \Phi)^{-1}$ defines a dual metric-measure space $M'=(\mathbb{R}^d, D^2 \Psi, \nu)$  and the mapping $x \to \nabla \Phi(x)$
is a measure preserving isometry between $M$  and $M'$.

In this paper we compute the  second "carr{\'e} du champ" operator $\Gamma_2$    which is responsible for spectral  properties of the Dirichlet form
$\mathcal{E}_{\Phi}$. Equivalently, we compute the Bakry--{\'E}mery tensor $\mbox{Ric} + D^2_M P$ of the metric-measure space $(\mathbb{R}^d, g, \mu)$. In particular, we get
\begin{theorem}
\label{BE-est}
Assume that $V$ and $W$ are convex. Then the Bakry--{\'E}mery tensor of  $M = (\mathbb{R}^d, g, \mu)$ is non-negative.
\end{theorem}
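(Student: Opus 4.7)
The plan is to read off the Bakry--\'Emery tensor of $L_\Phi$ pointwise from Bochner's identity
\begin{equation*}
\Gamma_2(f,f) = \|\nabla^2_M f\|_M^2 + (\mathrm{Ric}_M + D^2_M P)(\nabla_M f, \nabla_M f),
\end{equation*}
by choosing a test function whose gradient realises an arbitrary tangent vector. The Hessian structure $g_{ij} = \Phi_{ij}$ makes the choice $f = \Phi_e := \partial_e \Phi$ very natural, because $g^{ij}\partial_j\Phi_e = g^{ij}\sum_k e^k g_{jk} = e^i$, so $\nabla_M \Phi_e$ is just the constant coordinate vector field $e$. Every tangent vector at a point arises this way, so it will suffice to show $(\mathrm{Ric}_M + D^2_M P)(e,e) \ge 0$ for every $e \in \mathbb{R}^d$.

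Using the Hessian Christoffel symbols $\Gamma^k_{ij} = \tfrac{1}{2}g^{kl}\Phi_{ijl}$ one finds by direct computation $(\nabla^2_M \Phi_e)_{ij} = \tfrac{1}{2}\Phi_{ije}$, so with the symmetric matrix $A_{ij} := \Phi_{ije} = \partial_e g_{ij}$ one has $\|\nabla^2_M \Phi_e\|_M^2 = \tfrac{1}{4}\mathrm{tr}((g^{-1}A)^2)$ and $\|\nabla_M \Phi_e\|_M^2 = g_{ee}$. The Calabi identity $L_\Phi \Phi_e = -V_e$, obtained by differentiating (\ref{ma}) once along $e$, then gives $\Gamma(\Phi_e, L_\Phi \Phi_e) = -V_{ee}$ and hence $\Gamma_2(\Phi_e, \Phi_e) = \tfrac{1}{2}L_\Phi \Phi_{ee} + V_{ee}$.

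The key step is the computation of $L_\Phi \Phi_{ee}$. Expanding $L_\Phi$ in coordinates, simplifying its drift by the first-order identity $\partial_i \Phi^{ij} - \Phi^{ij}V_i = -W_j(\nabla\Phi)$ (again from (\ref{ma})), and then substituting the twice-differentiated Monge--Amp\`ere relation
\begin{equation*}
\Phi^{ij}\Phi_{ijee} = D^2 W(\nabla\Phi)(D^2\Phi\cdot e, D^2\Phi\cdot e) + \langle \nabla W(\nabla\Phi), \partial_e^2 \nabla\Phi\rangle + \mathrm{tr}((g^{-1}A)^2) - V_{ee},
\end{equation*}
one observes that the two $\nabla W(\nabla\Phi)$ contributions cancel, leaving the clean formula $L_\Phi \Phi_{ee} = D^2 W(\nabla\Phi)(D^2\Phi\cdot e, D^2\Phi\cdot e) + \mathrm{tr}((g^{-1}A)^2) - V_{ee}$. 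Feeding this back into Bochner produces the main identity
\begin{equation*}
(\mathrm{Ric}_M + D^2_M P)(e, e) = \tfrac{1}{2} D^2 V(e,e) + \tfrac{1}{2} D^2 W(\nabla\Phi)(D^2\Phi\cdot e, D^2\Phi\cdot e) + \tfrac{1}{4}\mathrm{tr}((g^{-1}A)^2),
\end{equation*}
in which the first two terms are non-negative precisely under the convexity hypotheses on $V$ and $W$, while the third is always non-negative since $g^{-1}A$ is similar to the symmetric matrix $g^{-1/2}A g^{-1/2}$. The principal obstacle is the bookkeeping in the twice-differentiated Monge--Amp\`ere equation and tracking the cancellation that leaves the cubic-derivative term with the favourable coefficient $\tfrac{1}{4}=\tfrac{1}{2}-\tfrac{1}{4}$; the one-dimensional case, where this residual reduces to $\tfrac{1}{4}(\Phi'''/\Phi'')^2$, is a useful sanity check.
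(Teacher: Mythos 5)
Your proof is correct and yields the same key identity as the paper, namely
\begin{equation*}
(\mathrm{Ric}_M + D^2_M P)(e,e) \;=\; \tfrac{1}{2} D^2 V(e,e) + \tfrac{1}{2} D^2 W(\nabla\Phi)\bigl(D^2\Phi\, e,\, D^2\Phi\, e\bigr) + \tfrac{1}{4}\,\mathrm{tr}\bigl((g^{-1}A)^2\bigr),
\end{equation*}
which is exactly the paper's Corollary \ref{Bak-Em} contracted with $e^i e^k$. But the route is genuinely different. The paper proceeds \emph{geometrically}: it computes the Christoffel symbols $\Gamma^k_{ij}=\tfrac12 g^{kl}\Phi_{ijl}$, plugs them into the formula for the full Riemann tensor $R_{ijkl}$, contracts to get $\mathrm{Ric}_{ik}$, simplifies one contraction via the once-differentiated Monge--Amp\`ere equation, and then separately computes $D^2_M P$ in coordinates and adds. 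Bochner's identity is then invoked only to tie this back to the $\Gamma_2$ operator. You instead work entirely on the \emph{diffusion} side: rather than touching the curvature tensor, you compute $\Gamma_2(\Phi_e)$ directly from its definition, exploiting that $\Phi_e$ is the natural ``affine'' test function with $\nabla_M\Phi_e \equiv e$ and $(D^2_M\Phi_e)_{ij}=\tfrac12\Phi_{ije}$, feed in the twice-differentiated Monge--Amp\`ere equation (equivalently, the paper's equation (\ref{diff'2}) applied to $L_\Phi g_{ee}$), observe the cancellation of the $\nabla W(\nabla\Phi)$-drift terms, and only at the very end use Bochner in reverse to read off $\mathrm{Ric}+D^2_M P$ on the arbitrary constant vector $e$. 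Your approach avoids the explicit $R_{ijkl}$ calculation entirely and stays within the operator calculus; the paper's approach is more self-contained geometrically and also delivers $\mathrm{Ric}$ separately, which the paper needs later (e.g.\ for the $CD(K,N)$ results). Both are correct; the final non-negativity under convexity of $V,W$ is read off from the same three-term decomposition.

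One small cosmetic remark: your ``drift simplification'' $\partial_i\Phi^{ij}-\Phi^{ij}V_i=-W_j(\nabla\Phi)$ needs a raised index on the right-hand side (it should read $\partial_i g^{ij} - g^{ij}V_i = -g^{jb}\,\partial_{x_b}W(\nabla\Phi)$), and, as noted above, it is not strictly needed: the cancellation of the two $\nabla W(\nabla\Phi)$ contributions already falls out of differentiating $L_\Phi\Phi_e=-V_e$ once more along $e$ and comparing with $L_\Phi\Phi_{ee}$.
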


Further  we investigate the concentration properties of $M$.
To this end we establish the following elementary but useful inequality
\begin{equation}
\label{d2m<ddp}
d^2_M(x,y) \le d(x,y)  \cdot d(\nabla \Phi(x), \nabla \Phi(y)),
\end{equation}
where $d$ is the standard Euclidean distance. 

We are especially interested in a particular case when the source measure $\mu$ is log-concave and the target measure $\nu$
is the Lebesgue measure on a convex set $A$. It turns out that in this case one can establish a stronger result than Theorem \ref{BE-est}. We prove namely that $M$
belongs to the family of the so-called $CD(K,N)$ spaces.
\begin{theorem}
Assume that $\mu = e^{-V}  dx$ is a log-concave measure and $\nu$
is the Lebesgue measure on a convex set $A$. Then $M$ is a $CD(0,2d)$-space.

If, in addition, $$D^2 V \ge C \cdot \mbox{{\rm Id}}$$ with $C>0$, then $(\mathbb{R}^d,g,\mu)$ is  a $CD\bigl(\frac{C}{m},2d\bigr)$-space,
where $m = \sup_{x \in \mathbb{R}^d} \|D^2 \Phi\|$.
\end{theorem}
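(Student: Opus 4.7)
By the Bakry--\'Emery criterion, on a weighted Riemannian manifold of dimension $d$ the $CD(K,2d)$ condition is equivalent to the pointwise tensor inequality
$$\mbox{Ric}_M + D^2_M P - \frac{1}{d}\,dP\otimes dP \;\ge\; K g,$$
where $P$ is determined by $d\mu=e^{-P}d\mbox{vol}_M$. Here $P = \tfrac12 (V+W\circ\nabla\Phi)$; when $\nu$ is Lebesgue on $A=\nabla\Phi(\mathbb{R}^d)$, $W$ is constant on $A$, whence $P=\tfrac12 V$ up to a constant and (\ref{ma}) reduces to the scalar identity $V = -\ln\det(D^2\Phi) + \mbox{const}$. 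The entire proof is an evaluation of the LHS using only this identity and the Hessian-manifold structure.

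\textbf{Key identities.} Differentiating twice and using $\partial_j g^{ab} = -g^{ac}g^{bd}\Phi_{jcd}$ yields
$$\partial_j\partial_k V + g^{ab}\Phi_{abjk} \;=\; S_{jk}, \qquad S_{jk}\;:=\;g^{ac}g^{bd}\Phi_{jab}\Phi_{kcd},$$
and $S_{jk}=\mbox{tr}(g^{-1}T_j g^{-1}T_k)$ with $(T_j)_{ab}:=\Phi_{jab}$ is manifestly positive semidefinite. On the Hessian manifold the Christoffel symbols are $\Gamma^k_{ij}=\tfrac12 g^{kl}\Phi_{ijl}$; from the above one also checks $\Gamma^l_{lk}=-\tfrac12\partial_k V$ (a consequence of MA) and the purely algebraic coincidence $\Gamma^l_{jm}\Gamma^m_{lk}=\tfrac14 S_{jk}$. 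Plugging everything into the classical formula for $\mbox{Ric}_{jk}$ and combining with $\tfrac12(D^2_M V)_{jk}=\tfrac12\partial_j\partial_k V - \tfrac12\Gamma^m_{jk}\partial_m V$, the first-derivative terms in $V$ cancel and the MA identity collapses the fourth-derivative contribution, giving the clean formula
$$\mbox{Ric}_M + \tfrac12 D^2_M V \;=\; \tfrac14\, S + \tfrac12\, D^2 V,$$
where $D^2 V$ denotes the \emph{Euclidean} Hessian.

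\textbf{Matrix Cauchy--Schwarz and conclusion.} Set $U_j:=g^{-1/2}T_j g^{-1/2}$, a symmetric $d\times d$ matrix with $\mbox{tr}\,U_j = g^{ab}\Phi_{jab}=-\partial_j V$, and $S(\xi,\xi)=\|U_\xi\|_F^2$ where $U_\xi:=\sum_j\xi^j U_j$. The elementary bound $(\mbox{tr}\,U_\xi)^2 \le d\,\|U_\xi\|_F^2$ gives the tensorial inequality $S \ge \tfrac{1}{d}\,dV\otimes dV$. Inserting this and $P=\tfrac12 V$ into the formula above,
$$\mbox{Ric}_M + D^2_M P - \frac{1}{d}\,dP\otimes dP \;=\; \frac{1}{4}\Bigl(S-\frac{1}{d}\,dV\otimes dV\Bigr) + \frac{1}{2}D^2 V \;\ge\; \frac{1}{2}D^2 V.$$
Log-concavity of $\mu$ ($D^2 V\ge 0$) immediately yields $CD(0,2d)$. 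Under $D^2 V\ge C\cdot\mbox{Id}$ and the bound $g = D^2\Phi\le m\cdot\mbox{Id}$, one gets $\tfrac12 D^2V \ge \tfrac{C}{2m}\,g$, producing the dimensional $CD(K,2d)$ bound with $K$ of the advertised order $C/m$.

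\textbf{Main obstacle.} The argument rests on the curvature identity $\mbox{Ric}_M + \tfrac12 D^2_M V = \tfrac14 S + \tfrac12 D^2 V$. Deriving it requires a careful and somewhat delicate accounting of derivatives of Christoffel symbols on the Hessian manifold, hinging on two independent algebraic miracles: the identity $\Gamma^l_{jm}\Gamma^m_{lk}=\tfrac14 S_{jk}$, which is a general feature of Hessian manifolds, and the collapse of the fourth-derivative term $g^{ab}\Phi_{abjk}$ via the MA identity in the special form $V=-\ln\det(D^2\Phi)$, which is precisely what the Lebesgue-target hypothesis buys. Once this identity is in hand, the remaining step is the matrix Cauchy--Schwarz $(\mbox{tr}\,U)^2\le d\,\mbox{tr}(U^2)$, which accounts both for the precise value $N=2d$ and for the fact that the construction cannot yield a better effective dimension.
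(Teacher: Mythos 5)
Your proof is correct and follows essentially the same route as the paper: your curvature identity $\mathrm{Ric}_M + \tfrac12 D^2_M V = \tfrac14 S + \tfrac12 D^2 V$ is exactly the paper's Corollary~\ref{Bak-Em} specialized to constant $W$, and your matrix Cauchy--Schwarz step is the same estimate used in the Lemma preceding the theorem in Section~9. (Both your argument and the paper's own proof actually yield $K=C/(2m)$ rather than the $C/m$ stated, a harmless constant discrepancy in the paper.)
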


The metric-measure spaces satisfying $CD(K,N)$-condition are widely studied in analysis. The most significant applications of this concept  are deeply related to the optimal transportation theory (see \cite{Villani2}).
Roughly speaking, these spaces  have analytical and geometrical properties comparable to those of  the corresponding model spaces (spheres and hyperbolic spaces). The list
 of properties which can be extracted from the $CD(K,N)$ property is rather impressive. These are Sobolev and isoperimetric inequalities, diameter bounds, volume growth of balls, Laplacian 
comparison estimates etc. In particular, the general theory implies  several interesting results in our special situation.
Applying inequality (\ref{d2m<ddp}) to the case when 
$\nu$ is the normalized Lebesque measure on a convex set $\Omega$ and $\mu = \gamma$ is the standard Gaussian measure,
 we get that $M$ has a superquadratic (fourth order) concentration function. It is known that the Gaussian-type concentration together with
an appropriate $CD(K,N)$-condition implies that $M$ has a bounded diameter. More precisely, we get
\begin{theorem}
\label{diameterbound}
Let $\mu = \gamma$ be the standard Gaussian measure and $\nu$ be the Lebesgue measure  on a convex set 
with diameter $D$. Assume, in addition, that $M$ is geodesically convex.
There exists a universal constant $C>0$ such that
$$
\mbox{\rm{diam}}(M) \le C \sqrt[4]{d}  \sqrt{D}.
$$
\end{theorem}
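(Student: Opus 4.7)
My plan is to derive, from inequality (\ref{d2m<ddp}) and the $CD(0, 2d)$ property of $M$ (established in the preceding theorem), incompatible upper and lower bounds on $\mu(\{r \ge L/4\})$, where $r(x) := d_M(0, x)$ and $L := \mathrm{diam}(M)$. First, since $\nabla \Phi$ maps $\mathbb{R}^d$ into a convex set of Euclidean diameter at most $D$, (\ref{d2m<ddp}) gives $d_M^2(x, y) \le D |x - y|$, so $r(x)^2 \le D |x|$. Combining with the Gaussian tail $\gamma(|x| \ge \sqrt{d} + t) \le e^{-t^2/2}$, for every $s \ge s_0 := 2 D^{1/2} d^{1/4}$,
$$ \mu(r \ge s) \le \gamma(|x| \ge s^2/D) \le \exp\bigl(-c\, s^4/D^2\bigr), $$
a super-Gaussian (quartic) tail bound.

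For the matching lower bound, I would invoke the $CD(0, 2d)$ Bishop--Gromov volume comparison for $(M, d_M, \mu)$. For every $\epsilon > 0$ there exists (by the triangle inequality applied to a near-diameter pair) $y_0 \in M$ with $r(y_0) \ge L/2 - \epsilon$. For $y \in B_M(y_0, L/4)$, $r(y) \ge r(y_0) - L/4 \ge L/4 - \epsilon$, so $B_M(y_0, L/4) \subseteq \{r \ge L/4 - \epsilon\}$. The $CD(0, 2d)$ Bishop--Gromov inequality asserts that $R \mapsto \mu(B_M(y_0, R))/R^{2d}$ is non-increasing; applied at $R = L/4$ and $R = L$, together with $\mu(B_M(y_0, L)) = \mu(M) = 1$, it yields $\mu(B_M(y_0, L/4)) \ge 4^{-2d}$. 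Hence $\mu(\{r \ge L/4 - \epsilon\}) \ge 4^{-2d}$, and by continuity of measure from above, letting $\epsilon \to 0$ gives $\mu(\{r \ge L/4\}) \ge 4^{-2d}$.

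To conclude, I would combine the two bounds. If $L \le 8 D^{1/2} d^{1/4}$ the theorem holds trivially; otherwise $L/4 \ge s_0$, and the concentration estimate at $s = L/4$ gives $\mu(\{r \ge L/4\}) \le \exp(-c L^4 / (256 D^2))$. Combined with the lower bound, $4^{-2d} \le \exp(-c L^4 /(256 D^2))$, whence $c L^4 /(256 D^2) \le 4 d \log 2$, yielding $L \le C\sqrt[4]{d}\,\sqrt{D}$.

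The main obstacle is the Bishop--Gromov volume comparison on the $CD(0, 2d)$-space $(M, d_M, \mu)$: this requires the full Sturm--Lott--Villani form of the $CD$-condition (not merely the non-negativity of the Bakry--\'Emery tensor), and the geodesic convexity hypothesis on $M$ is exactly what guarantees that the underlying optimal-transport machinery applies cleanly to the Hessian manifold. A secondary technical point is the continuity of $r$ in the Euclidean topology, which is a consequence of $d_M^2 \le D|x - y|$ and is needed to pass to the limit as $\epsilon \to 0$.
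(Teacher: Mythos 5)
Your proof is correct and uses the same two ingredients as the paper: the quartic concentration coming from $d_M^2(x,y)\le D|x-y|$ and the Bishop--Gromov volume comparison for the $CD(0,2d)$ space $(M,g,\gamma)$ (Corollary 8.2 in the paper). The implementation differs slightly: the paper follows Ledoux's template (Thm.\ 7.4 of \cite{Ledoux}), fixing a ball $B_r(0)$ with $r=\mathrm{diam}(M)/8$, splitting into the two cases $\mu(B_r(0))\ge\frac12$ and $\mu(B_r(0))\le\frac12$, and in each case using the concentration function for sets of measure $\ge\frac12$ together with Bishop--Gromov to force a contradiction. You instead go directly: pick a near-antipodal $y_0$ with $d_M(0,y_0)\approx L/2$, lower-bound $\mu(B_M(y_0,L/4))$ by $4^{-2d}$ via Bishop--Gromov and $\mu(B_M(y_0,L))=1$, and upper-bound $\mu\{d_M(0,\cdot)\ge L/4\}$ using the pointwise estimate $d_M^2(0,x)\le D|x|$ and the Gaussian tail. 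This avoids the case split and is a bit more transparent. One small point you should make explicit: the step $\mu(B_M(y_0,L))=1$ presupposes $L=\mathrm{diam}(M)<\infty$. This is harmless but worth fixing, e.g.\ by choosing, for any finite $L\le\mathrm{diam}(M)$, a point $y_0$ on a geodesic with $r(y_0)=L/2$ and comparing $\mu(B_M(y_0,L/4))$ with $\mu(B_M(y_0,L/2+s_1))\ge\frac12$, where $s_1\asymp D^{1/2}d^{1/4}$ satisfies $\mu\{r\le s_1\}\ge\frac12$; this gives a bound on every such $L$, hence on $\mathrm{diam}(M)$ including the a priori infinite case. (The paper's argument, as written with $r=\mathrm{diam}(M)/8$, has the same implicit finiteness assumption.)
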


We recall that a smooth (non-complete) Riemannian manifold $M$ is called geodesically convex if  with every two points $x,y \in M$ it 
contains a  smooth  geodesic path $\gamma : [0,T] \mapsto M$  joining $x$ and $y$: $\gamma(0)=x, \gamma(T)=y$ in such a way that $\gamma$ is the shortest way 
from $x$ to $y$. By the Hopf-Rinow theorem every complete manifold is geodesically convex.
It seems that the assumption of geodesical convexity of $M$   in Theorem \ref{diameterbound} can be omitted, but we were not able to prove this.

According to a recent result of E. Milman \cite{Milman} 
 the concentration and isoperimetric inequalities are equivalent in the case of the positive Bakry--{\'E}mery tensor.  In particular, applying our concentration estimates, we get that
 the Poincar{\'e} constant of $M$  depends on the diameter of $\Omega$ only.
Then applying techniques developed in \cite{Koles2010} we prove that if $\mu$ is standard Gaussian and $\nu$  normalized Lebesgue on a convex set, then
$$ 
\int \frac{\langle \nabla_M \Lambda, \nabla_M \Lambda \rangle_{M}}{\Lambda}  \ d \mu \le 1 .
$$
Then (\ref{main-appl}) follows from the Poincar{\'e} inequality for $(\mathbb{R}^d, g, \mu)$. We also establish certain reverse H{\"o}lder inequalities for $\Lambda$.

Applications of the Hessian structures in convex geometry  can be found in a recent paper of Bo'az Klartag and Rohen Eldan \cite{EK}.
 In particular,  it was shown in \cite{EK}  that the positive solution to the thin shell conjecture implies
the positive solution to the slicing problem.   Applications of the K{\"a}hler metrics to the Poincar{\'e}-type inequalities
and thin-shell estimates can be found in  \cite{Klartag}. It was  pointed out to the author by Klartag 
that some of the results from \cite{Klartag} can be generalized by the methods obtained in this paper.  More
on K{\"a}hler manifolds and convex sets see in  \cite{Gromov}.

The author is grateful to  Emanuel Milman, Bo'az Klartag, and Ronen Eldan for stimulating discussions during his visit to the Technion university of Haifa and the Tel-Aviv University.

This research  was carried out within ``The National Research University Higher School of Economics''
Academic Fund Program in 2012-2013, research grant No. 11-01-0175 and supported by the RFBR projects 10-01-00518, 11-01-90421-Ukr-f-a,
 and the program SFB 701 at the University of Bielefeld.

\section{Diffusion viewpoint}

We consider the optimal transportation mapping $\nabla \Phi$
pushing forward  a probability measure $\mu = e^{-V} \ dx$ onto another probability measure $\nu = e^{-W} \ dx$.  
In order to avoid unessential technicalities we assume in this section that $V$, $W$, $\Phi$, and 
$$\Psi(y) = \Phi^* (y)  = \sup _y (\langle x, y \rangle - \Phi(x))$$ are smooth functions  on the whole $\mathbb{R}^d$.

In particular, $\nabla \Phi, \nabla \Psi$ are reciprocal mappings: $\nabla \Psi \circ \nabla \Phi(x)=x$ and
the Hessians of $\Phi, \Psi$ satisfy the following identity
$$
D^2 \Phi (x) \cdot D^2 \Psi(\nabla \Phi(x)) = \mbox{Id}
$$
for every $x$. This means, in particular, that $D^2 \Phi$, $D^2 \Psi$ are always non-degenerate (positive) matrices.

By the change of variables formula
$$
V  = W(\nabla \Phi) - \log \det D^2 \Phi. 
$$
Let us differentiate this formula along the vector $e \in \mathbb{R}^d$.
Everywhere below the partial derivative of $f$ along $e$ 
$$
\frac{\partial f}{\partial e}(x) = \lim_{t \to 0} \frac{f(x + te) - f(x)}{t}
$$
will be denoted either
by $\frac{\partial f}{\partial e}$ or (for brevity)  by $f_e$. 
Thus, we use breve notations
$$
\nabla f_e, D^2 f_e
$$
for 
$$
\nabla \Bigl( \frac{\partial f}{\partial e} \Bigr), \ D^2 \Bigl( \frac{\partial f}{\partial e} \Bigr).
$$

After these agreements we can write the result of differentiating of the change of variables formula in the following form:
\begin{equation}
\label{diffusion}
V_{e} = \langle \nabla  \Phi_e, \nabla W( \nabla \Phi) \rangle - \mbox{Tr} \Bigl[ D^2 \Phi_{e} \cdot (D^2 \Phi)^{-1}\Bigr].
\end{equation}
Let us consider   the following diffusion operator:
$$
L_{\Phi}  f = \mbox{Tr} \bigl[D^2 f \cdot (D^2 \Phi)^{-1}  \bigr] -  \langle \nabla  f, \nabla W( \nabla \Phi) \rangle.
$$

\begin{lemma}
The following identity holds for any $f, h \in C^{\infty}_0(\mathbb{R}^d)$:
$$
\int \bigl< (D^2 \Phi)^{-1} \nabla f, \nabla h \bigr> \ d\mu =  -\int f \cdot L_{\Phi}  h \ d \mu = - \int h \cdot L_{\Phi}  f \ d \mu.
$$
\end{lemma}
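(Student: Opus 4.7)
The plan is to establish the first equality $\int \langle (D^2\Phi)^{-1}\nabla f,\nabla h\rangle\,d\mu = -\int f\cdot L_\Phi h\,d\mu$ by a direct integration by parts. The second equality $-\int f L_\Phi h\,d\mu = -\int h L_\Phi f\,d\mu$ then comes for free, since the Dirichlet form on the left is manifestly symmetric in $f$ and $h$ (the matrix $(D^2\Phi)^{-1}$ is symmetric).

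Writing $A^{ij}$ for the entries of $(D^2\Phi)^{-1}$ and integrating by parts in $x_j$ (boundary terms vanish because $f,h\in C^\infty_0$), I would get
\[
\int A^{ij}f_i h_j\,e^{-V}\,dx = -\int f\,\bigl[A^{ij}h_{ij} + (\partial_i A^{ij})h_j - A^{ij}V_i h_j\bigr]e^{-V}\,dx.
\]
The second-order piece $-\int f\,A^{ij}h_{ij}\,e^{-V}\,dx$ is already the Hessian part of $-\int f L_\Phi h\,d\mu$, so everything reduces to the pointwise identity
\[
A^{ij}V_i - \partial_i A^{ij} = (\partial_j W)(\nabla\Phi), \qquad j = 1,\dots,d,
\]
which, multiplied against $h_j$, reproduces exactly the first-order transport term $-\langle\nabla h,\nabla W(\nabla\Phi)\rangle$ of $L_\Phi h$.

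To verify this identity I would invoke two differential consequences of the Monge--Amp\`ere equation. First, equation (\ref{diffusion}) with $e=e_i$ reads $V_i = \Phi_{ik}(\partial_k W)(\nabla\Phi) - A^{kl}\Phi_{ikl}$; contracting with $A^{ij}$ and using $A^{ij}\Phi_{ik} = \delta^{j}_{k}$ gives
\[
A^{ij}V_i = (\partial_j W)(\nabla\Phi) - A^{ij}A^{kl}\Phi_{ikl}.
\]
Second, differentiating the inverse--matrix relation $A^{ij}\Phi_{jk} = \delta^i_k$ in $x_p$ and solving yields the standard formula $\partial_p A^{ij} = -A^{im}A^{jn}\Phi_{mnp}$; contracting $p=i$ and exploiting the symmetry of $\Phi_{mni}$ in its indices produces $\partial_i A^{ij} = -A^{ij}A^{kl}\Phi_{ikl}$. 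Subtracting these two expressions is exactly the claimed identity.

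The whole argument is essentially bookkeeping of third derivatives of $\Phi$; the one conceptual point is the cancellation between the derivative of the cofactor-type expression $\partial_i A^{ij}$ and the Jacobian term $A^{kl}\Phi_{ikl}$ hidden inside $V_i$, which is precisely the content of the differentiated Monge--Amp\`ere equation. I do not expect any real obstacle: the standing smoothness hypothesis on $V$, $W$, and $\Phi$ declared at the beginning of the section justifies all the differentiations and the integration by parts, and the compactness of support of $f,h$ disposes of the boundary terms.
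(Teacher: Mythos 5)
Your proof is correct, but it takes a genuinely different route from the paper's. The paper transports the whole computation to the $\nu$-side via the substitution $\nabla\Psi = (\nabla\Phi)^{-1}$: it rewrites $L_\Phi f(\nabla\Psi) = \mathrm{div}_\nu\bigl(\nabla f(\nabla\Psi)\bigr)$, pushes $\mu$ forward to $\nu$, does the standard weighted integration by parts against the density $e^{-W}$, and pushes back. This makes the symmetry structurally obvious and emphasizes the isometry between $M$ and the dual space $M'$. You instead stay in the $\mu$-coordinates and do the integration by parts bare-handed, reducing everything to the pointwise identity $A^{ij}V_i - \partial_i A^{ij} = (\partial_j W)(\nabla\Phi)$, which you correctly derive from the differentiated Monge--Amp\`ere relation $V_i = \Phi_{ik}(\partial_k W)(\nabla\Phi) - A^{kl}\Phi_{ikl}$ together with $\partial_i A^{ij} = -A^{ik}A^{lj}\Phi_{ikl}$; the two $A^{ij}A^{kl}\Phi_{ikl}$ terms cancel by the total symmetry of $\Phi_{ikl}$. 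Your computation is more explicit and highlights that the key input is exactly the differentiated Monge--Amp\`ere equation, at the cost of some index bookkeeping; the paper's change-of-variables argument is shorter and more conceptual. One minor slip: you say ``integrating by parts in $x_j$,'' but the derivative you move off is $\partial_i$ acting on $f$, so it is really $x_i$; the resulting displayed formula is nonetheless correct.
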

\begin{proof}
 Let us apply the relations $\nabla \Psi = (\nabla \Phi)^{-1}$ and $D^2 \Psi = (D^2 \Phi)^{-1} \circ \nabla \Psi$. One has
$$
L_{\Phi}  f \circ \nabla \Psi =   \mbox{Tr} \bigl[ D^2 f ( \nabla \Psi) \cdot  D^2 \Psi \bigr] - \langle \nabla  f ( \nabla \Psi), \nabla W \rangle.
$$
Note that
\begin{equation}
\label{v-div}
L_{\Phi}  f (\nabla \Psi) =    \mbox{div}_{\nu} \Bigl( \nabla  f ( \nabla \Psi) \Bigr),
\end{equation}
where $\mbox{div}_{\nu} v = \mbox{div} (v) - \langle v, \nabla W \rangle$ is the divergence of the vector field $v$ with respect to  $\nu$.
 Hence
\begin{align*}
\int f \cdot L_{\Phi}  h \ d \mu &
=
\int f(\nabla \Psi) \mbox{div}_{\nu} \bigl( \nabla  h ( \nabla \Psi) \bigr)  \ d \nu
=
- \int \langle D^2 \Psi \cdot \nabla f(\nabla \Psi),  \nabla  h ( \nabla \Psi) \rangle  \ d \nu
\\&   = -
\int \bigl< (D^2 \Phi)^{-1} \nabla f, \nabla h \bigr> \ d\mu.
\end{align*}
\end{proof}
{\bf Conclusion:}  $\Phi_e$ satisfies the following quasilinear diffusion equation:
\begin{equation}
\label{MA-diffusion}
V_e = - L_{\Phi} \Phi_{e},
\end{equation}
where $L_{\Phi}$ is the generator of the Dirichlet form
$$
\mathcal{E}_{\Phi} (f,h) = 
\int \bigl< (D^2 \Phi)^{-1} \nabla f, \nabla h \bigr> \ d\mu.
$$

It is well-known that the second  "carr{\'e} du champ" operator $\Gamma_2$ introduced by D.~Bakry (see, for instance, \cite{Bakry85})
is responsible for spectral properties of the corresponding diffusion
$$
\Gamma_2(f) = \frac{1}{2} \Bigl( L_{\Phi} \Gamma_{\Phi} (f) - 2 \Gamma_{\Phi}(L_{\Phi} f, f) \Bigr),
$$
where  $\Gamma_{\Phi} f  = \bigl< (D^2 \Phi)^{-1} \nabla f, \nabla f \bigr>$.
We will compute $\Gamma_2$ in Sections 3-4.

\begin{example}
{\bf Computation of  $\Gamma_2$ in the one-dimensional case:}

Consider a  Dirichlet form
$$
\mathcal{E}(f) = \int_{\mathbb{R}} a (f')^2 \ d \mu.
$$ 
One has
$$
\Gamma_2(f) = a^2 (f')^2 + aa' f' f^{''} + \frac{1}{2} (f')^2 \bigl( (a')^2 - a a^{''} + a a' V' + 2 a^2 V^{''} \bigr).
$$
Let $a = \frac{1}{\varphi^{''}}$, where $\varphi$  solves
$
e^{-V} = e^{-W(\varphi')} \varphi^{''}.
$ 
We get from this equation that
$$
\frac{\varphi^{'''}}{\varphi^{''}} = W'(\varphi^{'}) \varphi^{''}  - V',
$$
$$
\frac{\varphi^{(4)}}{\varphi^{''}}  - \Bigl(  \frac{\varphi^{'''}}{\varphi^{''}} \Bigr)^2 = W''(\varphi^{'}) (\varphi^{''})^2   + W'(\varphi^{'}) \varphi^{'''} - V^{''}.
$$
Substituting this into the formula for  $\Gamma_2$, one can easily obtain
$$
\Gamma_2(f) = \frac{(f'')^2}{(\varphi'')^2}
- \frac{\varphi^{'''}}{(\varphi'')^3} f' f^{''}
+ \frac{1}{2} (f')^2 \Big[ \frac{(\varphi^{'''})^2}{(\varphi^{''})^4}  + \frac{V''}{(\varphi^{''})^2} + W''(\varphi') \Bigr].
$$
\end{example}

\section{Differential-geometric viewpoint. }

Recall that the Hessian of a smooth function $f$ on a Riemannian manifold $M$ is the tensor defined on a couple of vector fields $X, Y$
by the following  formula
$$
 D^2_M f (X, Y) = \langle \nabla_X \nabla_M f, Y \rangle_M.
$$ 
In coordinates
\begin{equation}
\label{hess-coor}
(D^2 f)_{ik}= \frac{\partial^2 f}{\partial x_i \partial x_k}  - \Gamma^j_{ik} \frac{\partial f}{\partial x_j},
\end{equation}
where $ \Gamma^j_{ik}$ are the corresponding Christoffel symbols.
Following  E.~Calabi  \cite{Calabi} we consider a Riemannian metric $g$ on $\mathbb{R}^d$ given by the Hessian of the function $\Phi$ (with respect to the standard Euclidean connection).  In a fixed standard orthogonal coordinate system one has
$$
g_{ij} = \Phi_{x_i x_j}.
$$ 
In the computations below we follow the standard geometric agreements: 
(Einstein summation) the expressions 
$A^i B_i, A^{ij} B_{ij}$ etc. mean that one takes a sum over repeating indexes:
$$
A^i B_i := \sum_{i=1}^d A^i B_i, \ A^{ijk} B_{ijl} := \sum_{1 \le i,j \le d} A^{ijk} B_{ijl}
$$
The inverse metric $g^{-1}$ is denoted by $g^{ij}$.

 The Riemannian  gradient can be computed as follows:
$$
(\nabla_M f)_j = g^{ij}\frac{\partial f}{\partial x_i}.
$$
Our manifold $M$ belongs to the class of the so-called Hessian manifolds (see \cite{Ch-Yau}, \cite{Shima}), which are real analogs of the K{\"a}hler manifolds
intensively studied in differential geometry.
Some of the computations below can be found in  \cite{Calabi} or \cite{Shima} but we give them for completeness of the picture.

All the objects related to $M$ (considered as a Riemannian manifold) will be written with the subscript $M$: $\nabla_M$ is the gradient, $D^2_M$ is the Hessian, and $\Delta_M$ is the Laplace-Beltrami operator on $M$.

Let us rewrite the  Dirichlet form 
$$
\mathcal{E}_{\Phi} (f,h) = 
\int \bigl< (D^2 \Phi)^{-1} \nabla f, \nabla h \bigr> \ d\mu
$$
 in  geometric terms:
$$
\mathcal{E}_{\Phi} (f,h) = \int_{M} \langle\nabla_M f, \nabla_M h \rangle_{M}  \ d \mu.
$$ 
Computation of  the Riemannian volume 
$$
\mbox{\rm vol}_M = \sqrt{\det g} \ dx = e^{\frac{1}{2} W(\nabla \Phi) - \frac{1}{2} V } \ dx
$$
 gives  another useful expression for  $\mathcal{E}_{\Phi}$:
\begin{equation}
\label{DirFormDiff}
\mathcal{E}_{\Phi} (f,h)  =  \int_{M} \langle\nabla_M f, \nabla_M h \rangle_{M}    e^{-P}  \ d   \mbox{\rm vol}_M ,
\end{equation}
where
$$
P = \frac{1}{2} \bigl( W(\nabla \Phi) +  V \bigr).
$$
In what follows we compute the Ricci tensor of $g$. The Christoffel symbol $$\Gamma^{k}_{ij} = \frac{1}{2} g^{kl}\Bigl( \frac{\partial g_{lj}}{\partial x^i}  + \frac{\partial g_{il}}{\partial x^j}  - \frac{\partial g_{ij}}{\partial x^l} \Bigr)$$
takes a simplified form
$$
\Gamma^{k}_{ij} =  \frac{1}{2} g^{kl} \Phi_{ijl},
$$
where 
$$
\Phi_{ijl} = \frac{\partial^3 \Phi}{\partial x_i \partial x_j \partial x_l}.
$$
For computing the Ricci curvature tensor we apply the following well-known formula for the Riemannian tensor with lowered indexes:
$$
R_{ijkl} = \frac{1}{2} \Bigl( \frac{\partial^2 g_{il}}{\partial {x^j} \partial {x^k}}  + \frac{\partial^2 g_{jk}}{\partial {x^i} \partial {x^l}}  - \frac{\partial^2 g_{ik}}{\partial {x^j} \partial {x^l}}  - \frac{\partial^2 g_{jl}}{\partial {x^i} \partial {x^k}} \Bigr)
+ g_{ms}\Bigl( \Gamma^{m}_{jk} \Gamma^{s}_{il}-  \Gamma^{m}_{ik} \Gamma^{s}_{jl} \Bigr). 
$$
The first part of this expression  vanishes and we get
$$
R_{ijkl} = \frac{1}{4}  g^{ms}\Bigl( \Phi_{mil}  \Phi_{sjk} -  \Phi_{mik}  \Phi_{sjl}  \Bigr) .
$$
Hence
$$
{\rm Ric}_{ik} =  \frac{1}{4}  g^{jl} g^{ms}\Bigl( \Phi_{mil}  \Phi_{sjk} -  \Phi_{mik}  \Phi_{sjl}  \Bigr) .
$$
Now we take into account the Mong{e}-Amp{\`e}re equation. Differentiating
$$
\log \det g = W(\nabla \Phi) - V
$$
we get another version of (\ref{MA-diffusion}):
$$
2\Gamma^{i}_{ik} = \frac{\partial \log \det g}{\partial x_k} =  g^{il} \Phi_{ikl} = g_{ik} \frac{\partial W}{\partial {x_i}} ( \nabla \Phi) - \frac{\partial V}{\partial x_k}.
$$
Finally we get the following expression for the Ricci tensor:
\begin{equation}
\label{ricci-V-W}
{\rm Ric}_{ik} =  \frac{1}{4}  g^{jl} g^{ms} \Phi_{mil}  \Phi_{sjk} -   \frac{1}{4}   g^{ms} \Phi_{mik}  \Bigl( g_{js} \frac{\partial W}{\partial {x_j}} ( \nabla \Phi) - \frac{\partial V}{\partial x_s} \Bigr).
\end{equation}
Note that the first part defines a  non-negative quadratic form:
$$
 \frac{1}{4}  g^{jl} g^{ms} \Phi_{mil}  \Phi_{sjk} \xi^i \xi^k  =  \frac{1}{4}  g^{jl}  g^{ms} (\Phi_{mil} \xi^i ) ( \Phi_{sjk} \xi^k)  \ge 0.
$$

\section{$\Gamma_2$-operator and geometric properties of $M$}

In this section we calculate the second carr{\'e} du champ operator $\Gamma_2$
$$
\Gamma_2(f) = \frac{1}{2} \Bigl( L_{\Phi} \Gamma_{\Phi} (f) - 2 \Gamma_{\Phi}(L_{\Phi} f, f) \Bigr).
$$
Applying  formula (\ref{DirFormDiff}) which represents the Dirichlet form $\mathcal{E}_{\Phi}$ via the energy  integral over $M$ equipped with the measure $\mu = e^{-P} \ d vol$
we rewrite the diffusion operator $L_{\Phi}$ as follows:
$$
L_{\Phi} = \Delta_M - \nabla_M \cdot \nabla_M P. 
$$
We apply here the Bochner's identity
$$
\|D^2_M f\|^2_{HS} +  \mbox{Ric}(\nabla_M f, \nabla_M f) = \frac{1}{2} \Delta_M |\nabla_M f|^2 - \langle \nabla_M f, \nabla_M \Delta_M f \rangle_M.
$$
A generalization of this formula to the metric-measure spaces (see, for instance, \cite{Villani2}) gives the following  expression for $\Gamma_2$: 
$$
\Gamma_2 (f) = \| D^2_M f \|^2_{{HS}} + \bigl( \mbox{Ric} + D^2_M P \bigr) (\nabla_M f, \nabla_M f).
$$
Recall that the quantity
$$
\mbox{\rm{R}}_{\infty,\mu} := \mbox{Ric} + D^2_M P
$$ is called the Bakry--{\'E}mery tensor. 
The notation $
\mbox{\rm{R}}_{\infty,\mu}$ will be explained in the very last section of the paper.

The Bakry--{\'E}mery tensor has been introduced
in \cite{BE}. According to a classical result of   Bakry and {\'E}mery  the
positivity of this tensor implies the log-Sobolev inequality for manifolds with measures.

Let us compute $\mbox{Ric} + D^2_M P$. Applying the coordinate expression for the Hessian  (\ref{hess-coor}) we get
$$
(D^2_M f)_{ik} 
= \frac{\partial^2 f}{\partial x_i \partial x_k}  -   \frac{1}{2} g^{jl} \Phi_{ikl} \frac{\partial f}{\partial x_j}.
$$
Consequently
\begin{align*}
(D^2_M f(\nabla \Phi))_{ik}  &
= g_{il} g_{kj} \frac{\partial^2 f}{\partial x_l \partial x_j} \circ\nabla \Phi  + \Bigl( \frac{\partial g_{ij}}{\partial x_k}  - \Gamma^s_{ik} g_{sj}\Bigr)\frac{\partial f}{\partial x_j} \circ \nabla \Phi
\\&
= g_{il} g_{kj} \frac{\partial^2 f}{\partial x_l \partial x_j} \circ\nabla \Phi  + \frac{1}{2} \Phi_{ijk}\frac{\partial f}{\partial x_j} \circ \nabla \Phi
\end{align*}

Differentiating the relation $D^2 \Phi \cdot D^2 \Psi(\nabla \Phi)=\mbox{Id}$ one can easily obtain
than for every vector $e$ the following identity holds
$$
D^2 (\partial_e \Phi) \cdot D^2 \Psi(\nabla \Phi) +  D^2 \Phi \cdot D^2 (\partial_e \Psi)(\nabla \Phi) \cdot D^2 \Phi=0.
$$
Hence
$$
 D^2 (\partial_e \Psi)(\nabla \Phi) = - (D^2 \Phi)^{-1} D^2 (\partial_e \Phi) (D^2 \Phi)^{-2}.
$$
With the help of all these computations one can easily verify that the Hessian of a smooth function $f$
has the following symmetric expression:
\begin{corollary}
$$
D^2_M f = \frac{1}{2} \Bigl[ D^2 f + D^2 \Phi  \cdot   D^2 \bigl[ f \circ (\nabla \Psi)\bigr] \circ \nabla \Phi \cdot  D^2 \Phi\Bigr].
$$
\end{corollary}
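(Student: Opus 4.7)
The plan is to verify the corollary by a direct coordinate computation, matching the claimed average against the coordinate expression $(D^2_M f)_{ik} = f_{ik} - \tfrac{1}{2} g^{jl} \Phi_{ikl} f_j$ derived just above. Setting $h = f \circ \nabla\Psi$, the chain rule gives
\[
h_{ab}(y) = f_{mn}(\nabla\Psi(y))\, \Psi_{ma}(y)\, \Psi_{nb}(y) + f_m(\nabla\Psi(y))\, \Psi_{mab}(y),
\]
and, evaluating at $y = \nabla\Phi(x)$, the relation $\Psi_{cb}(\nabla\Phi) = g^{cb}$ (the $(c,b)$-entry of the identity $D^2\Phi \cdot D^2\Psi(\nabla\Phi) = \mathrm{Id}$) rewrites the first summand as $f_{mn}(x)\,g^{ma}(x)\,g^{nb}(x)$.

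For the third-derivative term $f_m \Psi_{mab}(\nabla\Phi)$ one uses the reciprocal Hessian identity
\[
D^2(\partial_e \Psi)(\nabla\Phi) = -(D^2\Phi)^{-1} D^2(\partial_e \Phi) (D^2\Phi)^{-2}
\]
that was just recorded in the same section; equivalently, differentiating $\Psi_{cb}(\nabla\Phi)\,\Phi_{bm} = \delta_{cm}$ in the $x^s$-direction and raising indices by $g^{\cdot\,\cdot}$ yields
\[
\Psi_{mab}(\nabla\Phi(x)) = -g^{mp}(x)\, g^{aq}(x)\, g^{bs}(x)\, \Phi_{pqs}(x).
\]

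To finish, sandwich $(D^2 h)(\nabla\Phi)$ between two copies of $D^2\Phi = g$. The contractions $g_{ia} g^{ma} = \delta_i^m$ and $g_{kb} g^{nb} = \delta_k^n$ collapse the second-derivative contribution to $f_{ik}(x)$, while the same contractions applied to the triple product in the third-derivative contribution leave a single inverse metric surviving and produce $-g^{mp}\Phi_{ikp}\, f_m$. Consequently
\[
\bigl(D^2\Phi \cdot D^2[f\circ\nabla\Psi](\nabla\Phi) \cdot D^2\Phi\bigr)_{ik} = f_{ik} - g^{jl} \Phi_{ikl} f_j,
\]
and averaging this with the Euclidean Hessian $(D^2 f)_{ik} = f_{ik}$ reproduces exactly $(D^2_M f)_{ik}$. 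The only real obstacle is purely clerical: keeping the triple-metric contraction straight when substituting $\Psi_{mab}(\nabla\Phi)$; there is no conceptual difficulty beyond the chain rule and differentiating the reciprocity relation $D^2\Phi \cdot D^2\Psi(\nabla\Phi) = \mathrm{Id}$.
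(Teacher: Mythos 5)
Your proof is correct and matches the route the paper intends: chain rule on $f\circ\nabla\Psi$, the substitutions $\Psi_{ab}(\nabla\Phi)=g^{ab}$ and $\Psi_{mab}(\nabla\Phi)=-g^{mp}g^{aq}g^{bs}\Phi_{pqs}$, the $g$-sandwich, and averaging with $D^2 f$ to recover $(D^2_M f)_{ik}=f_{ik}-\tfrac12 g^{jl}\Phi_{ikl}f_j$. Since you re-derive the third-derivative relation directly from differentiating $\Psi_{cb}(\nabla\Phi)\Phi_{bm}=\delta_{cm}$ rather than quoting the matrix display that precedes the corollary in the paper (which, as printed, has an unbalanced power of $(D^2\Phi)^{-1}$ and omits the inverse metric on the $e$-index), your argument is actually more self-contained than the paper's ``one can easily verify'' sketch.
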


Writing down the expressions for the Hessians of $V$ and $W(\nabla \Phi)$ and applying  (\ref{ricci-V-W}) one can  easily get
\begin{corollary}
\label{Bak-Em}
The Bakry--{\'E}mery tensor 
$
\mbox{\rm{R}}_{\infty,\mu}$ has the following coordinate expression 
$$
\mbox{\rm{Ric}}_{ik} +  (D^2_M P)_{ik}
= \frac{1}{4}  g^{jl} g^{ms} \Bigl( \frac{\partial^3 \Phi}{\partial x_m \partial x_i \partial x_l} \Bigr) 
\Bigl(\frac{\partial^3 \Phi}{\partial x_s \partial x_j \partial x_k}\Bigr) + \frac{1}{2}  \frac{\partial^2 V}{\partial x_i \partial x_k} + \frac{1}{2} g_{il} g_{kj} \frac{\partial^2 W}{\partial x_l \partial x_j} \circ\nabla \Phi.
$$
\end{corollary}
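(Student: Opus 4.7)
The plan is to combine the coordinate expression (\ref{ricci-V-W}) for $\mbox{Ric}_{ik}$ derived in Section~3 with a direct computation of $D^2_M P$. Recalling (\ref{hess-coor}) together with the simplified Christoffel symbol $\Gamma^k_{ij} = \frac{1}{2} g^{kl} \Phi_{ijl}$ from Section~3, any smooth function $f$ satisfies
$$
(D^2_M f)_{ik} = \frac{\partial^2 f}{\partial x_i \partial x_k} - \frac{1}{2} g^{jl} \Phi_{ikl} \frac{\partial f}{\partial x_j}.
$$
So it suffices to compute ordinary Euclidean first and second partial derivatives of $P = \frac{1}{2}\bigl(V + W \circ \nabla \Phi\bigr)$ and then assemble the pieces.

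First I would apply the chain rule: $\partial_k(W \circ \nabla \Phi) = W_j(\nabla \Phi)\,\Phi_{jk} = g_{jk}\, W_j(\nabla \Phi)$ and, differentiating once more,
$$
\partial_i \partial_k(W \circ \nabla \Phi) = W_j(\nabla \Phi)\,\Phi_{ijk} + g_{il}\, g_{jk}\, W_{lj}(\nabla \Phi).
$$
Substituting into the Hessian formula applied to $P$ yields four contributions: the two clean terms $\frac{1}{2} V_{ik}$ and $\frac{1}{2} g_{il} g_{kj} W_{lj}(\nabla \Phi)$ from the ordinary second derivatives, a chain-rule remainder $\frac{1}{4} \Phi_{ijk} W_j(\nabla \Phi)$, and a Christoffel correction $-\frac{1}{4} g^{jl} \Phi_{ikl}\bigl(V_j + g_{jm}\, W_m(\nabla \Phi)\bigr)$ coming from the $-\frac{1}{2} g^{jl} \Phi_{ikl} \partial_j P$ piece.

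The final step is to add this expression for $(D^2_M P)_{ik}$ to the Ricci formula (\ref{ricci-V-W}). Contracting $g^{jl} g_{jm} = \delta^l_m$ in the Christoffel correction and invoking the total symmetry of $\Phi_{ijk}$ in its three indices, one checks that the two leftover terms involving $W_j(\nabla \Phi)$ and $V_s$ exactly cancel the second summand $-\frac{1}{4} g^{ms} \Phi_{mik}\bigl(g_{js} W_j(\nabla \Phi) - V_s\bigr)$ in (\ref{ricci-V-W}). What survives is precisely the right-hand side claimed in Corollary~\ref{Bak-Em}. The argument is pure bookkeeping with Einstein summation; the only point requiring genuine care is tracking the symmetries of $\Phi_{ijk}$ and the raising/lowering of indices via $g$ and $g^{-1}$, so no analytic obstacle arises.
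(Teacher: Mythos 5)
Your approach is sound and is essentially the one the paper takes: compute $(D^2_M P)_{ik}$ from the coordinate Hessian formula with the simplified Christoffel symbols, then add the Ricci expression (\ref{ricci-V-W}) and observe the cancellation. There is, however, a coefficient slip in one of your four contributions. From
$$
\partial_i \partial_k(W \circ \nabla \Phi) = W_j(\nabla \Phi)\,\Phi_{ijk} + g_{il}\, g_{jk}\, W_{lj}(\nabla \Phi),
$$
multiplying by the overall factor $\tfrac{1}{2}$ in $P = \tfrac{1}{2}(V + W\circ\nabla\Phi)$ gives the chain-rule remainder $\tfrac{1}{2} W_j(\nabla\Phi)\,\Phi_{ijk}$, not $\tfrac{1}{4} W_j(\nabla\Phi)\,\Phi_{ijk}$ as you wrote. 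This matters: with $\tfrac{1}{4}$ the $W$-piece of your Christoffel correction, $-\tfrac{1}{4}\Phi_{ikm}W_m(\nabla\Phi)$, would annihilate the chain-rule remainder outright, leaving nothing to cancel the $-\tfrac{1}{4}g^{ms}\Phi_{mik}\,g_{js}W_j(\nabla\Phi)$ term in (\ref{ricci-V-W}); the claimed cancellation would then fail. With the correct $\tfrac{1}{2}$, the surviving $W$-term from $D^2_M P$ is $+\tfrac{1}{4}\Phi_{ikm}W_m(\nabla\Phi)$, which cancels the Ricci term exactly, and likewise the $V_j$ contributions from the Christoffel correction cancel $+\tfrac{1}{4}g^{ms}\Phi_{mik}V_s$ from (\ref{ricci-V-W}). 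Once the $\tfrac{1}{4}\mapsto\tfrac{1}{2}$ is fixed, the argument is a correct and complete proof of the corollary, agreeing with the paper's route (the paper uses the symmetric formula $D^2_M f = \tfrac{1}{2}\bigl[D^2 f + D^2\Phi\cdot D^2[f\circ\nabla\Psi]\circ\nabla\Phi\cdot D^2\Phi\bigr]$ applied to $V$ and $W\circ\nabla\Phi$, but that reduces to the same bookkeeping).
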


Finally we obtain Theorem \ref{BE-est} and even more
\begin{theorem}
\label{g2}
The Bakry--{\'E}mery tensor $
\mbox{\rm{R}}_{\infty,\mu}$ satisfies
$$
\bigl(
\mbox{\rm{R}}_{\infty,\mu}\bigr)_{ij} \ge
 \frac{1}{2}  \frac{\partial^2 V}{\partial x_i \partial x_k} + \frac{1}{2} g_{il} g_{kj} \frac{\partial^2 W}{\partial x_l \partial x_j} \circ\nabla \Phi.
$$
One has
$$
\mbox{\rm{R}}_{\infty,\mu} \ge C \cdot g
$$
provided
$$
 (D^2 \Phi)^{-\frac{1}{2}} D^2 V (D^2 \Phi)^{-\frac{1}{2}} + (D^2 \Phi)^{\frac{1}{2}}  D^2 W(\nabla \Phi) (D^2 \Phi)^{\frac{1}{2}} \ge 2C \cdot \mbox{\rm{Id}}.
$$

In particular, if $V$ and $W$ are convex, when $
\mbox{\rm{R}}_{\infty,\mu} \ge 0$.
\end{theorem}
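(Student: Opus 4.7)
The proof is essentially an exercise in extracting inequalities from the closed-form expression for the Bakry--\'Emery tensor proved in Corollary \ref{Bak-Em}. My plan is to take that formula as the starting point and split it into its three summands:
the cubic ``third-derivative'' piece, the $V$-Hessian piece, and the pulled-back $W$-Hessian piece.

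First, I would observe that the cubic term is a non-negative quadratic form in $\xi$. Indeed, as already noted just after (\ref{ricci-V-W}), setting $A_{ml}(\xi) = \Phi_{mil}\xi^{i}$ (a symmetric matrix in $m,l$ for each $\xi$) gives
$$
\frac{1}{4} g^{jl} g^{ms} \Phi_{mil}\Phi_{sjk}\,\xi^{i}\xi^{k}
= \frac{1}{4}\,\mathrm{tr}\bigl(g^{-1} A(\xi)\, g^{-1} A(\xi)\bigr)
= \frac{1}{4}\,\bigl\| g^{-1/2} A(\xi)\, g^{-1/2}\bigr\|_{HS}^{2} \ge 0.
$$
Dropping this non-negative term in the formula of Corollary \ref{Bak-Em} immediately yields the first inequality of the theorem.

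Next, for the lower bound $\mathrm{R}_{\infty,\mu}\ge C\cdot g$, I would reduce the coordinate inequality
$$
\tfrac{1}{2}\,V_{ik}\,\xi^{i}\xi^{k}
+ \tfrac{1}{2}\, g_{il}g_{kj}\,W_{lj}(\nabla\Phi)\,\xi^{i}\xi^{k}
\;\ge\; C\, g_{ik}\xi^{i}\xi^{k}
$$
to the hypothesis by the substitution $\eta = g^{1/2}\xi$. Under this change of variable one has $\langle g\xi,\xi\rangle = |\eta|^{2}$, $\langle D^{2}V\,\xi,\xi\rangle = \langle g^{-1/2} D^{2}V\, g^{-1/2}\eta,\eta\rangle$, and $\langle g\,D^{2}W(\nabla\Phi)\,g\,\xi,\xi\rangle = \langle g^{1/2} D^{2}W(\nabla\Phi)\, g^{1/2}\eta,\eta\rangle$; so the desired inequality is exactly the stated assumption
$$
(D^{2}\Phi)^{-1/2} D^{2}V\, (D^{2}\Phi)^{-1/2} + (D^{2}\Phi)^{1/2} D^{2}W(\nabla\Phi)\, (D^{2}\Phi)^{1/2} \;\ge\; 2C\cdot\mathrm{Id}.
$$
Combining with the non-negativity of the cubic piece finishes the bound.

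Finally, the convexity statement is an immediate specialization: if $V$ and $W$ are convex then both $D^{2}V\succeq 0$ and $D^{2}W(\nabla\Phi)\succeq 0$, so the two quadratic forms $\tfrac12\langle D^{2}V\xi,\xi\rangle$ and $\tfrac12\langle g D^{2}W(\nabla\Phi) g\,\xi,\xi\rangle$ are non-negative, and together with the cubic term we get $\mathrm{R}_{\infty,\mu}\ge 0$. There is no genuine obstacle here; the only point that requires care is the sandwich-by-$g^{1/2}$ bookkeeping in the second step, so as to get the precise symmetric form stated in the hypothesis rather than a one-sided conjugation that would yield a weaker constant.
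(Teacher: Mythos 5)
Your proof is correct and follows the same route the paper intends: start from the closed-form expression of Corollary \ref{Bak-Em}, drop the manifestly non-negative cubic term (a point the paper already makes immediately after (\ref{ricci-V-W})), and reduce the lower bound $\mathrm{R}_{\infty,\mu}\ge C\cdot g$ to the stated hypothesis by the conjugation $\eta = g^{1/2}\xi$. The paper leaves the proof implicit, and your write-up supplies exactly the bookkeeping it omits.
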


\section{Concentration and isoperimetric properties of $M$}

 We denote by $d$ the standard Euclidean distance and by $d_M$ the distance on the manifold $M$.

Almost everywhere below we deal with log-concave measures. Recall that the probability measure with Lebesgue density
$\mu = e^{-V} \ dx$ is called log-concave if $V$ is a convex function. Note that this function may take  value $+\infty$ outside of 
a convex set $A$. A function $\beta_i \in L^1(\mu)$ is called logarithmic derivative of $\mu$ along $x_i$ is the following identity holds
$$
\int \beta_i \varphi \ d \mu = \int \partial_{x_i} \varphi \ d \mu
$$ 
for every $\varphi \in C^{\infty}_{0}(\mathbb{R}^d)$.
If $V$ is regular enough, the only reasonable candidate for $\beta_i$ is $V_{x_i}$. If $V_{x_i}$ is well-defined and integrable with respect to $\mu$, then this is indeed the case. However, not every log-concave measure has a logarithmic derivative.
It is easy to check that the normalized Lebesgue on a compact convex set does not have any logarithmic derivative.

In this section we study concentration properties of the manifold $M$ under assumption  that the target measure
$\nu$ is compactly supported. 
They easily follows from the concentration properties of $\mu$  with the help of the following lemma.
A similar result see in \cite{EK} (Lemma 3.2).

\begin{lemma}
The following estimate holds
$$
d^2_M(x,y) \le  \langle \nabla \Phi(y) - \nabla \Phi(x), y - x \rangle \le d(x,y) \cdot d (\nabla \Phi(x), \nabla \Phi(y))
$$
\end{lemma}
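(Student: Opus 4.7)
My plan is to produce an explicit path from $x$ to $y$ in $M$ and bound its Riemannian length; the natural choice is the Euclidean segment, which happens to make everything collapse to a first-derivative identity for $\nabla\Phi$.

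Concretely, let $\gamma(t) = x + t(y-x)$ for $t \in [0,1]$ and write $v = y-x$. Since $M = (\mathbb{R}^d, D^2\Phi)$ is defined on all of $\mathbb{R}^d$ and $\Phi$ is smooth, $\gamma$ is an admissible path, so the Riemannian distance is bounded by its length:
\[
d_M(x,y) \;\le\; \int_0^1 \sqrt{\langle D^2\Phi(\gamma(t))\,v,\, v\rangle}\; dt.
\]
Applying the Cauchy--Schwarz inequality in $L^2([0,1])$ to the right-hand side (i.e.\ squaring and using Jensen's inequality for the function $t\mapsto 1$) gives
\[
d_M^2(x,y) \;\le\; \int_0^1 \langle D^2\Phi(\gamma(t))\,v,\, v\rangle \; dt.
\]

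The key observation is that the integrand is a total derivative along the segment:
\[
\langle D^2\Phi(\gamma(t))\,v,\, v\rangle \;=\; \frac{d}{dt}\, \bigl\langle \nabla\Phi(\gamma(t)),\, v\bigr\rangle.
\]
Integrating from $0$ to $1$ and using the fundamental theorem of calculus yields
\[
\int_0^1 \langle D^2\Phi(\gamma(t))\,v,\, v\rangle\; dt \;=\; \bigl\langle \nabla\Phi(y) - \nabla\Phi(x),\, y-x\bigr\rangle,
\]
which proves the first inequality. The second inequality is then just the Euclidean Cauchy--Schwarz inequality, identifying $d(x,y) = |y-x|$ and $d(\nabla\Phi(x),\nabla\Phi(y)) = |\nabla\Phi(y)-\nabla\Phi(x)|$.

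There is no real obstacle here: the argument is essentially a one-line length estimate, and smoothness of $\Phi$ on $\mathbb{R}^d$ (assumed at the start of Section~2) ensures the segment $\gamma$ lies in the manifold and the integrands are well defined. The only point worth flagging is that we are not claiming $\gamma$ is a geodesic in $M$ — it is merely a competitor, which is all that is needed for the upper bound on $d_M$.
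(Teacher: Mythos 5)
Your argument is correct and is essentially identical to the paper's: both take the Euclidean segment as a competitor path, square the length via Cauchy--Schwarz, recognize $\langle D^2\Phi(\gamma(t))v,v\rangle$ as $\frac{d}{dt}\langle\nabla\Phi(\gamma(t)),v\rangle$, and finish with Cauchy--Schwarz in $\mathbb{R}^d$; the only difference is that the paper parametrizes over $[0,d(x,y)]$ with a unit-speed tangent instead of $[0,1]$.
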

\begin{proof}
Take two points $x, y$ and join them with the line $t \to  x + t v$, where $v=\frac{y-x}{d(x,y)}$. By the definition of the Riemannian distance one has
\begin{align*}
& d^2_M(x,y)  \le \Bigl( \int_0^{d(x,y)}  \sqrt{\langle D^2 \Phi(x+tv) v, v \rangle }\ dt \Bigr)^2
\le d(x,y) \int_0^{d(x,y)}  {\langle D^2 \Phi(x+tv) v, v \rangle }\ dt
\\& =  d(x,y) \langle \nabla \Phi(x+ tv), v \rangle |^{d(x,y)}_0
=  d(x,y) \langle \nabla \Phi(y)  - \nabla \Phi(x), v \rangle
\le  \langle \nabla \Phi(y) - \nabla \Phi(x), y - x \rangle
\\&
\le d(x,y) \cdot d(\nabla \Phi(y), \nabla \Phi(x)).
\end{align*}
\end{proof}

\begin{corollary}
Assume that $\nu$ has a bounded support $\mbox{\rm{diam}}(\rm{supp}(\nu)) = D$. Then
for every $A \subset \mathbb{R}^d$ one has
$$
\mu(x: d_M(x,A) \le h) \ge \mu\Bigl(x: d(x,A)  \le \frac{h^2}{D} \Bigr)
$$
\begin{proof}
According to the previous lemma
$$
\mu\bigl(x: d_M(x,A) \le h\bigr) \ge 
\mu\bigl(x: d(x,A) d(\nabla \Phi(x), \nabla \Phi(A)) \le h^2\bigr) \ge
\mu\bigl(x: d(x,A) D \le h^2\bigr).
$$
\end{proof}
\end{corollary}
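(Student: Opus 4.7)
The plan is to derive the stated inclusion of sublevel sets from the preceding lemma and then push it forward to measures by monotonicity. The key geometric input is the inequality
\[
d_M^2(x,y) \le d(x,y) \cdot d(\nabla \Phi(x),\nabla \Phi(y)),
\]
which has just been established. Since $\nabla\Phi$ pushes $\mu$ forward to $\nu$, we have $\nabla\Phi(x) \in \mathrm{supp}(\nu)$ for $\mu$-a.e.\ $x$, so that for $\mu$-a.e.\ pair $(x,y)$
\[
d(\nabla\Phi(x),\nabla\Phi(y)) \le \mathrm{diam}(\mathrm{supp}(\nu)) = D.
\]
Plugging this bound into the lemma yields the ``H\"older-type'' comparison $d_M(x,y) \le \sqrt{D\, d(x,y)}$ between the Riemannian and Euclidean distances.

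With this comparison in hand, I would argue by pointwise set inclusion. Fix $h>0$ and take any $x$ with $d(x,A) \le h^2/D$. Choose $y \in A$ (or a sequence in $A$ approximating the infimum) for which $d(x,y)$ is essentially $d(x,A)$; the distance comparison then gives $d_M(x,y) \le \sqrt{D \cdot d(x,A)} \le h$, hence $d_M(x,A) \le h$. This yields
\[
\Bigl\{x : d(x,A) \le \tfrac{h^2}{D}\Bigr\} \subset \bigl\{x : d_M(x,A) \le h\bigr\}
\]
up to a $\mu$-null set (coming from the a.e.\ statement on $\nabla\Phi$), and applying $\mu$ to both sides produces the claimed inequality.

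The only delicate point, and the place I would pause to be careful, is the fact that $d_M(x,y)$ is \emph{a priori} only defined via the Riemannian structure on $M$ (which lives on all of $\mathbb{R}^d$), while the distance bound $d(\nabla\Phi(x),\nabla\Phi(y)) \le D$ holds only for $\mu$-a.e.\ $x,y$. Since the corollary asserts an inequality about $\mu$-measures, discarding a $\mu$-null set is harmless. One also has to verify measurability of $\{x : d_M(x,A) \le h\}$, but since $d_M(\cdot,A)$ is continuous in the ambient topology on $\mathbb{R}^d$, this is automatic. No other step requires real work: once the previous lemma is granted, the corollary is essentially a rewriting together with the trivial estimate of $d(\nabla\Phi(x),\nabla\Phi(y))$ by $D$.
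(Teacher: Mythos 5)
Your proof is correct and takes essentially the same approach as the paper's: you bound $d(\nabla\Phi(x),\nabla\Phi(y))$ by $D=\mathrm{diam}(\mathrm{supp}(\nu))$, plug into the preceding lemma to get the distance comparison $d_M(x,y)\le\sqrt{D\,d(x,y)}$, and pass from the resulting sublevel-set inclusion to the measure inequality by monotonicity. The paper's one-line proof inserts the intermediate set $\{x : d(x,A)\,d(\nabla\Phi(x),\nabla\Phi(A))\le h^2\}$ before invoking the uniform bound $\le D$, but this is the same argument; your extra remarks about the $\mu$-a.e.\ issue and measurability are harmless (and somewhat overcautious in the smooth setting assumed here).
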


In particular, one can estimate the concentration function of the metric-measure space $(M, d_M, \mu)$.
Recall that a function $\mathcal{K}_{\mu}$ is called concentration function for $\mu$ if it satisfies the following inequality:
$$
\mu(x: d(x,A) \le h) \ge 1- e^{-\mathcal{K_{\mu}}(h)}
$$
for every set $A$ with  $\mu(A) \ge \frac{1}{2}$ and any $h\ge 0$.

In particular, if  $\mbox{\rm{diam}}(\rm{supp}(\nu)) = D$ and $\mu$ is standard Gaussian, then
\begin{equation}
\label{concentrat}
\mu(x: d_M(x,A) \le h) \ge 1- e^{-\frac{1}{2}\bigl(\frac{h^2}{D}\bigr)^2}, \ \mu(A) \ge \frac{1}{2}.
\end{equation}

In the absence of the uniform bound  some concentration estimates are  still available.
\begin{corollary}
Let $\mathcal{K}_{\mu}$ and $\mathcal{K}_{\nu}$ be concentration functions of $\mu$ and $\nu$ respectively.
Then for every $h>0, t>0$ and $A \subset \mathbb{R}^d$ satisfying $\mu(A)\ge \frac{1}{2}$
$$
\mu\bigl(x: d_M(x,A) \le h\bigr) 
\ge
1- e^{-\mathcal{K_{\mu}}\bigl(\frac{h^2}{t}\bigr)} -  e^{-\mathcal{K_{\nu}}(t)}.
$$
\end{corollary}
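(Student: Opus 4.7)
The proof will follow the template of the previous corollary, replacing the deterministic diameter bound on $\nu$ by a probabilistic one coming from the concentration function $\mathcal{K}_\nu$. The starting point is again the inequality
$$
d_M^2(x,y) \le d(x,y) \cdot d(\nabla \Phi(x), \nabla \Phi(y))
$$
established in the previous lemma. Fix $h>0$ and $t>0$. If $d(x,A) \le h^2/t$ and simultaneously $d(\nabla \Phi(x), \nabla \Phi(A)) \le t$, then picking a near-minimizer $y \in A$ of the Euclidean distance and combining the two bounds yields $d_M^2(x,A) \le d_M^2(x,y) \le (h^2/t) \cdot t = h^2$. Equivalently,
$$
\{x: d_M(x,A) > h\} \subset \bigl\{x : d(x,A) > h^2/t\bigr\} \cup \bigl\{x : d(\nabla \Phi(x), \nabla \Phi(A)) > t\bigr\}.
$$

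The first event is controlled directly by the definition of the concentration function of $\mu$: since $\mu(A) \ge 1/2$, one has $\mu(\{x : d(x,A) > h^2/t\}) \le e^{-\mathcal{K}_\mu(h^2/t)}$. For the second event I use that $\nabla \Phi$ pushes $\mu$ forward onto $\nu$, so
$$
\mu\bigl(\{x : d(\nabla \Phi(x), \nabla \Phi(A)) > t\}\bigr) = \nu\bigl(\{y : d(y, \nabla \Phi(A)) > t\}\bigr),
$$
and the set $\nabla \Phi(A)$ satisfies $\nu(\nabla \Phi(A)) = \mu(A) \ge 1/2$ (modulo a null set, which is irrelevant for the concentration estimate). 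Therefore the definition of the concentration function of $\nu$ gives $\nu(\{y: d(y, \nabla \Phi(A)) > t\}) \le e^{-\mathcal{K}_\nu(t)}$. A union bound then produces the claimed inequality.

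The only mild subtlety is to make sure that $\nabla \Phi(A)$ is (up to a $\nu$-null set) measurable with $\nu$-mass at least $1/2$, which follows from the fact that $\nabla \Phi$ is a Borel map defined $\mu$-almost everywhere and pushes $\mu$ to $\nu$; no further regularity is needed for a concentration-function argument since we may work with measurable hulls. There is no real obstacle here beyond setting up the two-event decomposition correctly, and the argument degenerates to the previous corollary in the limit $t = D$ when $\nu$ is compactly supported (in which case the second term can be chosen to vanish).
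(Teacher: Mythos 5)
Your proof follows the paper's argument almost exactly: the same product inequality $d_M^2(x,y) \le d(x,y)\, d(\nabla\Phi(x),\nabla\Phi(y))$ from the preceding lemma, the same two-event decomposition controlling $d(x,A)$ by the concentration of $\mu$ and $d(\nabla\Phi(x),\nabla\Phi(A))$ by the concentration of $\nu$ via the pushforward identity and $\nu(\nabla\Phi(A))\ge\frac12$, and the same union bound.

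One small caution on wording: when you say "picking a near-minimizer $y\in A$ of the Euclidean distance and combining the two bounds," the hypothesis $d(\nabla\Phi(x),\nabla\Phi(A))\le t$ only guarantees $d(\nabla\Phi(x),\nabla\Phi(y'))\le t$ for \emph{some} (possibly different) $y'\in A$; it does not by itself give $d(\nabla\Phi(x),\nabla\Phi(y))\le t$ for the Euclidean near-minimizer $y$. In other words, the two infima over $A$ may be approached by different points of $A$, and the lemma as stated only bounds $d_M^2(x,A)$ by $\inf_{y\in A}\bigl[d(x,y)\,d(\nabla\Phi(x),\nabla\Phi(y))\bigr]$, which is in general larger than the product $d(x,A)\cdot d(\nabla\Phi(x),\nabla\Phi(A))$. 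The paper's written proof passes through the set $\{x:d(x,A)\,d(\nabla\Phi(x),\nabla\Phi(A))\le h^2\}$, which carries exactly the same implicit identification, so your argument is not a deviation from the paper --- but if you want a watertight write-up you should either add a short argument showing that a single $y\in A$ can be chosen to satisfy both estimates, or work directly with the infimum of the product.
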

\begin{proof}
\begin{align*}
\mu\bigl(x: d_M(x,A) \le h\bigr) & \ge 
\mu\bigl(x: d(x,A) d(\nabla \Phi(x), \nabla \Phi(A)) \le h^2\bigr) 
\\& \ge
\mu\bigl(x: d(\nabla \Phi(x), \nabla \Phi(A)) \le  t;  d(x,A) t \le h^2\bigr)
\\&
\ge \mu\bigl(x: d(x,A) t \le h^2\bigr)  - \mu(x: d(\nabla \Phi(x), \nabla \Phi(A)) > t)
\\&
= \mu\bigl(x: d(x,A) t \le h^2\bigr)  - \nu(y: d(y, \nabla \Phi(A)) > t)
\\&
\ge 
1- e^{-\mathcal{K_{\mu}}\bigl(\frac{h^2}{t}\bigr)} -  e^{-\mathcal{K_{\nu}}(t)}.
\end{align*}
The very last inequality follows from the definition of the concentration function
$$
\mu\bigl(x: d(x,A) t \le h^2\bigr)  \ge 1- e^{-\mathcal{K_{\mu}}\bigl(\frac{h^2}{t}\bigr)},
$$
$$
 - \nu(y: d(y, \nabla \Phi(A)) > t) =  \nu(y: d(y, \nabla \Phi(A)) \le  t) -1 
\ge -e^{-\mathcal{K_{\nu}}(t)}.
$$
\end{proof}

This means, in particular, that the  concentration function of our metric-measure space $M$
can be estimated by
\begin{equation}
\label{KM-est}
\mathcal{K}_M(h) \ge - \log \inf_{t>0} \bigl[ e^{-\mathcal{K}_{\mu}\bigl(\frac{h^2}{t}\bigr)} + e^{-\mathcal{K}_{\nu}(t)}  \bigr].
\end{equation}
Till the end of the section we deal with the log-concave target and source measures. It is well known (the most general statement
of this type has been obtained by Emanuel Milman \cite{Milman}) that concentration inequalities imply isoperimetric inequalities under assumption of
the positivity of the the Bakry--{\'E}mery tensor. More precisely
\begin{theorem}
\label{MilmTh}
{\bf [E. Milman]} Let $M$ be a smooth complete oriented connected Riemannian manifold equipped with the measure
$\mu = e^{-P} \ d {\rm vol}$. Assume that the corresponding Bakry--{\'E}mery tensor is nonnegative and the
concentration function $\mathcal{K}_M$ satisfies $\mathcal{K}_M(r) \ge \alpha(r)$, $\forall r \ge \alpha^{-1}(\log 2)$.
 Then the isoperimetric function $\mathcal{I}_{M}(t)$  of $M$ can be estimated from below by
$$
\min\Bigl(c_1 t   \gamma\bigl(\log \frac{1}{t}\bigr), c_2\Bigr), \ \ \gamma(x) = \frac{x}{\alpha^{-1}(x)},
$$
where the constant $c_1$ is universal and $c_2$ depend solely on $\alpha$.
\end{theorem}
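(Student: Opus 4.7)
The plan is to use the equivalence technology that Milman introduced, which upgrades a pure concentration bound to an isoperimetric one by semigroup regularization, exploiting the nonnegativity of the Bakry--\'Emery tensor. Write $L_\Phi = \Delta_M - \nabla_M P \cdot \nabla_M$ and let $P_s = e^{s L_\Phi}$ be the associated diffusion semigroup on $M$. Because $\mbox{\rm{R}}_{\infty,\mu} \ge 0$, Bakry's commutation $|\nabla_M P_s f|^2 \le P_s(|\nabla_M f|^2)$ holds, and differentiating $s \mapsto P_s((P_{t-s}f)^2)$ along the semigroup gives the $L^\infty \to \mathrm{Lip}$ smoothing estimate
$$
\|P_s f\|_{\mathrm{Lip}} \le \frac{C}{\sqrt{s}}\,\|f\|_\infty,
$$
together with the Ledoux--Buser-type inequality
$$
\int_M u(1-u)\,d\mu \le C'\sqrt{s}\,\mu^+(A),\qquad u := P_s 1_A,
$$
for any Borel $A$ with finite perimeter $\mu^+(A)$. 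These two inequalities are the engines of the proof.

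Next I would fix $A$ with $\mu(A) = t \in (0,\tfrac12]$, set $u = P_s 1_A$, and note that $u$ is $L(s) := C/\sqrt{s}$-Lipschitz with $\int u\,d\mu = t$ and $0 \le u \le 1$. Let $m$ be a median of $u$. Applying the hypothesis $\mathcal{K}_M(r) \ge \alpha(r)$ to the Lipschitz function $u/L(s)$ yields
$$
\mu(|u - m| \ge r) \le 2\,e^{-\alpha(r/L(s))}
$$
for all $r \ge L(s)\,\alpha^{-1}(\log 2)$. The idea is that since $\int u \, d\mu = t$ is small, the median $m$ is small, and the concentration tail forces most of the mass of $u$ to lie in an interval of width $\sim L(s)\,\alpha^{-1}(\log(1/t))$ around $m$; a short computation then gives a lower bound
$$
\int_M u(1-u)\,d\mu \ge c\,t - (\text{tail correction}),
$$
provided $L(s)\,\alpha^{-1}(\log(1/t))$ is significantly smaller than $1$.

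Finally I would combine the two bounds on $\int u(1-u)\,d\mu$ to conclude
$$
\mu^+(A) \ge \frac{c\,t}{C'\sqrt{s}}
$$
and optimize $s$: the balancing choice is $\sqrt{s} \sim 1/\alpha^{-1}(\log(1/t))$, which yields exactly
$$
\mu^+(A) \ge c_1\,t\,\gamma(\log(1/t)),\qquad \gamma(x) = \frac{x}{\alpha^{-1}(x)},
$$
up to the level where this ceases to improve on a universal constant $c_2$ coming from the bounded regime $t \approx 1/2$. The main obstacle is Step 3 (the lower bound on $\int u(1-u)\,d\mu$): one has to convert a one-sided tail estimate around the median into a genuine spread of the smoothed function, which forces a delicate comparison between the level sets of $u$ and of $1_A$ and an explicit control of how $m$ is positioned relative to $t$. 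Once this concentration-to-variance lemma is established, the optimization is mechanical and the Riemannian completeness is used only to justify the existence of $P_s$ and the completeness of the co-area decomposition of $\mu^+$.
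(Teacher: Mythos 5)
The paper does not prove this statement — it is Theorem~1.5 of \cite{Milman}, quoted with attribution — so there is no internal proof to compare with; but your attempt contains a genuine gap that would prevent it from yielding the result as stated. The semigroup route you propose (Bakry commutation giving $\|P_s f\|_{\mathrm{Lip}} \le C/\sqrt{s}\,\|f\|_\infty$ under $\mbox{\rm{R}}_{\infty,\mu}\ge 0$, plus the Ledoux--Buser inequality $\int u(1-u)\,d\mu \le C'\sqrt{s}\,\mu^+(A)$) is sound as far as it goes, but it is a \emph{lossy} mechanism. Trace the constants: to force $\int u(1-u)\,d\mu \gtrsim t$ you need the $L(s)$-Lipschitz function $u=P_s 1_A$ to concentrate near its median $m\le 2t$ on the scale where $\{u\ge 1/2\}$ has measure $\ll t$, i.e.\ $L(s)\,\alpha^{-1}(\log(1/t))\lesssim 1$, hence $\sqrt{s}\gtrsim\alpha^{-1}(\log(1/t))$. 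Substituting into $\mu^+(A)\ge ct/\sqrt{s}$ gives at best
$$
\mathcal{I}(t)\gtrsim \frac{t}{\alpha^{-1}(\log(1/t))},
$$
which falls short of the claimed $c_1\,t\,\gamma(\log(1/t))=c_1\,\frac{t\,\log(1/t)}{\alpha^{-1}(\log(1/t))}$ by a full factor of $\log(1/t)$. Your proposed balancing $\sqrt{s}\sim 1/\alpha^{-1}(\log(1/t))$ actually lies on the \emph{wrong} side of the constraint (for small $t$ the time scale is far too short for the concentration tail of $u$ to be meaningful), and even the corrected optimization is weaker than the target. In the Gaussian model this is stark: with $\alpha^{-1}(x)=\sqrt{2x}$ your argument yields $\sim t/\sqrt{\log(1/t)}$, which is weaker even than the plain Buser bound $\mathcal{I}(t)\gtrsim t$ that uses only the spectral gap and no concentration input, whereas the theorem asserts $\mathcal{I}(t)\gtrsim t\sqrt{\log(1/t)}$.

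The logarithmic loss is intrinsic to the semigroup-smoothing method: to exploit concentration at the scale relevant for small $t$ one must run the heat flow for a long time, and the factor $1/\sqrt{s}$ in the Buser inequality then deteriorates. Milman's proof avoids this by a qualitatively different mechanism: under a non-negative Bakry--\'Emery tensor the isoperimetric profile $\mathcal{I}_M$ is concave (this goes back to Bavard--Pansu, Bayle, Morgan, via the second variation/Heintze--Karcher estimate for isoperimetric minimizers), and this concavity is used to bootstrap the expansion estimate coming from the concentration hypothesis into a pointwise lower bound on $\mathcal{I}_M(t)$ for all $t$, with no extraneous logarithmic factor. That geometric-measure-theoretic concavity input has no counterpart in your semigroup sketch, and without it the stated sharp form of the theorem is out of reach.
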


We apply this theorem and the concentration inequalities obtained above to get 
 isoperimetric inequalities on $M$. We have to overcome on this way certain technical difficulty: eventual non-completeness
of  $M$. In the following lemma we establish sufficient conditions for a Hessian manifold to be complete, but these
conditions are not always fulfilled in the applications we consider.

\begin{lemma}
\label{suff-complete}
Assume that $V$ and $W$ are  smooth functions, defined on the whole $\mathbb{R}^d$, $C=\sup_{x} \| D^2 W(x)\| < \infty$ and there exists $c>0$ such that $D^2 V (x) \ge c \cdot \mbox{\rm{Id}}$ for every $x \in \mathbb{R}^d$. Then $M$ is unbounded and complete.
\end{lemma}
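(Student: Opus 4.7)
The plan is to derive both assertions from a single uniform lower bound
\[
D^2 \Phi(x) \;\ge\; \lambda_0 \cdot \mathrm{Id} \qquad \text{for all } x \in \mathbb{R}^d, \quad \lambda_0 := \sqrt{c/C}.
\]
Via the reciprocity $D^2 \Psi(\nabla \Phi(x)) = (D^2 \Phi(x))^{-1}$ this is equivalent to the upper bound $\lambda_{\max}(D^2 \Psi(y)) \le 1/\lambda_0$ on the Legendre dual $\Psi = \Phi^*$. Once such a bound is in hand, both conclusions follow with little work.

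The main step is a maximum principle applied to the dual Monge--Amp\`ere equation $\log \det D^2 \Psi = V(\nabla \Psi) - W$. Differentiating twice along a unit direction $\xi$ and rearranging yields
\[
\tilde{L}_{\Psi} \Psi_{\xi\xi} \;=\; \Psi^{ik} \Psi^{jl} \Psi_{kl\xi} \Psi_{ij\xi} \;+\; V_{ab}(\nabla \Psi)\, \Psi_{a\xi} \Psi_{b\xi} \;-\; W_{\xi\xi},
\]
where $\tilde{L}_{\Psi} f := \Psi^{ij} f_{ij} - V_i(\nabla \Psi) f_i$ is the dual analogue of $L_{\Phi}$ and $\Psi^{ij} := (D^2 \Psi)^{-1}_{ij}$. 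Granting for a moment that $\lambda_{\max}(D^2 \Psi)$ attains its supremum at some $y_0$ with a unit top eigenvector $\xi_0$, the left side above is nonpositive at $y_0$, the first term on the right is nonnegative (it equals $\mathrm{tr}(ABAB)$ with $A = (D^2 \Psi)^{-1}$, $B = \partial_\xi D^2 \Psi$), and the remaining terms are controlled via $D^2 V \ge c\,\mathrm{Id}$ and $\|D^2 W\| \le C$:
\[
c\, \lambda_{\max}(D^2 \Psi(y_0))^2 \;=\; c\,|D^2 \Psi(y_0)\xi_0|^2 \;\le\; V_{ab}(\nabla \Psi(y_0))\, \Psi_{a\xi_0} \Psi_{b\xi_0} \;\le\; W_{\xi_0 \xi_0}(y_0) \;\le\; C,
\]
so $\lambda_{\max}(D^2 \Psi) \le \sqrt{C/c}$ globally.

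The hard part is justifying that this supremum is actually attained, since a priori it could escape to infinity. I would handle this by a Bernstein-type penalization: first extract an a priori quadratic growth estimate for $\Psi_{\xi\xi}$ from the hypotheses together with the dual Monge--Amp\`ere equation, then apply the computation above to the penalized quantity $\Psi_{\xi\xi}(y) - \epsilon|y|^2$ on a large ball $B_R$, verify that its maximum is attained in the interior and that the extra terms produced by the penalization are $\epsilon$-small (this is where $\|D^2 W\| \le C$ is crucially used to bound the barrier corrections), and finally pass to the limits $R \to \infty$ and $\epsilon \downarrow 0$ to recover the clean estimate $\lambda_{\max}(D^2 \Psi) \le \sqrt{C/c}$.

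With $D^2 \Phi \ge \lambda_0 \mathrm{Id}$ in hand, integrating $\sqrt{\langle D^2 \Phi \,\dot\gamma, \dot\gamma\rangle} \ge \sqrt{\lambda_0}\,|\dot\gamma|$ along any smooth curve from $x$ to $y$ gives $d_M(x,y) \ge \sqrt{\lambda_0}\,|x - y|$; in particular $d_M(x,0) \to \infty$ as $|x| \to \infty$, proving that $M$ is unbounded. On the other hand, any $d_M$-Cauchy sequence $(x_n)$ is then Euclidean-Cauchy and converges to some $x_\infty \in \mathbb{R}^d$, and the preceding lemma combined with smoothness (hence local Lipschitz continuity) of $\nabla \Phi$ yields $d_M(x_n, x_\infty)^2 \le |\nabla \Phi(x_n) - \nabla \Phi(x_\infty)|\cdot|x_n - x_\infty| \to 0$, establishing completeness.
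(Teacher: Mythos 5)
Your proof has the same skeleton as the paper's: both reduce everything to the uniform lower bound $D^2\Phi \ge \sqrt{c/C}\,\mathrm{Id}$, then conclude $d_M(x,y)\ge (c/C)^{1/4}\,|x-y|$ and from there read off unboundedness and completeness. (The paper actually writes $\sqrt{C/c}$ in two places; your $\lambda_0=\sqrt{c/C}$ is the correct sign.) The difference is how that key bound is obtained. The paper simply invokes the ``Caffarelli-type estimate'' from the cited literature, which under exactly these hypotheses ($D^2V\ge c\,\mathrm{Id}$, $\|D^2W\|\le C$) gives $\|D^2\Psi\|\le\sqrt{C/c}$ directly. You instead re-derive it by a maximum principle applied to $\lambda_{\max}(D^2\Psi)$ using the dual Monge--Amp\`ere equation. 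Your pointwise computation and the chain $c\,\lambda_{\max}^2\le V_{ab}\Psi_{a\xi_0}\Psi_{b\xi_0}\le W_{\xi_0\xi_0}\le C$ at an interior maximum are correct and are indeed the heart of Caffarelli's contraction theorem. However, the penalization step you flag as ``the hard part'' is genuinely incomplete as written: a merely \emph{quadratic} a priori growth bound $\Psi_{\xi\xi}\le A|y|^2+B$ would force you to take $\epsilon>A$ to push the maximum of $\Psi_{\xi\xi}-\epsilon|y|^2$ into the interior, and then $\epsilon\not\to 0$; you would need sub-quadratic growth, a logarithmic or exponential weight, or a direct compactness/approximation argument of the type actually used in the cited reference. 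Since this is precisely the step the paper outsources by citation, your attempt is a self-contained variant that is morally right but not yet rigorous at that point. Your completeness argument, by contrast, is a nice explicit version of what the paper leaves implicit: you combine $d_M\ge\sqrt{\lambda_0}\,d$ (to get Euclidean convergence of $d_M$-Cauchy sequences) with the paper's Lemma on $d_M^2(x,y)\le |x-y|\,|\nabla\Phi(x)-\nabla\Phi(y)|$ and smoothness of $\nabla\Phi$ to upgrade it to $d_M$-convergence.
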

\begin{proof}
It is sufficient to show that there exists $\varepsilon>0$ such that $d_M(x,y) \ge \varepsilon d(x,y)$.
It follows from the Caffarelli-type estimate (see, for instance, \cite{Koles2011}) that the norm of $\|D^2 \Psi\|$, where $\Psi=\Phi^*$, is uniformly bounded
by $\sqrt{\frac{C}{c}}$. Since $\nabla \Phi$ and $\nabla \Psi$ are reciprocal and $D^2 \Phi = (D^2 \Psi)^{-1} \circ \nabla \Phi$,
one has $D^2 \Phi \ge \sqrt{\frac{C}{c}} \mbox{Id}$. Hence
$$
d_M(x,y) = \inf_{\gamma, \ \gamma(0)=x, \gamma(1)=y} \int_{\gamma} \sqrt{\langle D^2 \Phi(\gamma(s)) \dot{\gamma},\dot{\gamma} \rangle } \ ds 
\ge \sqrt[4]{\frac{C}{c}} \cdot d(x,y).
$$
\end{proof}

\begin{corollary}
Assume that $V$ and $W$ satisfy the assumptions of Lemma \ref{suff-complete}. Then the isoperimetric function
of $M$ satisfies the conclusion of Theorem \ref{MilmTh} with $\mathcal{K}_M$ satisfying (\ref{KM-est}).
\end{corollary}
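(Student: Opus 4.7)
The plan is to present this corollary as a straightforward synthesis of the machinery assembled earlier in the paper, combining four ingredients: completeness from Lemma \ref{suff-complete}, nonnegativity of the Bakry--{\'E}mery tensor, the concentration bound (\ref{KM-est}), and Milman's Theorem \ref{MilmTh}.

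First I would verify that $M$ satisfies the hypotheses of Milman's theorem. Smoothness of $g = D^2\Phi$ follows from the Monge--Amp\`ere equation (\ref{ma}) with smooth $V,W$ and uniformly convex $V$; the underlying space is $\mathbb{R}^d$, hence connected and orientable; and completeness is precisely the conclusion of Lemma \ref{suff-complete}. The measure $\mu$ has the required weighted-volume form $e^{-P}\,d\mathrm{vol}_M$ with $P = \frac{1}{2}(V + W \circ \nabla\Phi)$ as computed in Section 3. For the nonnegativity of $\mathrm{R}_{\infty,\mu}$, I would invoke Theorem \ref{g2}: the lower bound $D^2 V \ge c \cdot \mathrm{Id}$ together with the uniform two-sided bound on $D^2\Phi$ established in the proof of Lemma \ref{suff-complete} allows one to control the contribution of $D^2 W(\nabla\Phi)$ in the sandwich expression of Theorem \ref{g2} and conclude $\mathrm{R}_{\infty,\mu} \ge 0$ (in the typical application $W$ will itself be convex, making this step immediate via Theorem \ref{BE-est}).

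Second, I would feed in the concentration function bound (\ref{KM-est}) derived in the preceding corollary, which gives
$$\mathcal{K}_M(h) \ge -\log \inf_{t>0}\bigl[e^{-\mathcal{K}_{\mu}(h^2/t)} + e^{-\mathcal{K}_{\nu}(t)}\bigr].$$
Taking $\alpha$ to be any function satisfying $\alpha(h) \le \mathcal{K}_M(h)$ for $h \ge \alpha^{-1}(\log 2)$ (existing because both $\mu$ and $\nu$ are log-concave and so enjoy at least exponential concentration), I would then apply Milman's Theorem \ref{MilmTh} directly to obtain the advertised isoperimetric lower bound in terms of $\gamma(x) = x/\alpha^{-1}(x)$.

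The main obstacle I anticipate is a bookkeeping one rather than a deep analytic one: confirming that the Bakry--{\'E}mery nonnegativity hypothesis of Milman's theorem is genuinely implied by the assumptions of Lemma \ref{suff-complete} (which control only $\|D^2 W\|$, not its sign), so that no additional convexity assumption on $W$ needs to be added to the corollary. This is the one place where the proof is not purely formal assembly and where the quantitative bound of Theorem \ref{g2}, rather than the qualitative Theorem \ref{BE-est}, must be used.
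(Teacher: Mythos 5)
Your overall assembly is correct — completeness from Lemma~\ref{suff-complete}, the concentration bound (\ref{KM-est}), and Theorem~\ref{MilmTh} are indeed the three ingredients — but you have misidentified where the nonnegativity of $\mbox{\rm R}_{\infty,\mu}$ comes from, and your proposed substitute does not work. Section~5 carries a standing declaration just before Theorem~\ref{MilmTh}: ``Till the end of the section we deal with the log-concave target and source measures.'' Thus in the corollary both $V$ and $W$ are assumed convex \emph{in addition to} the quantitative hypotheses of Lemma~\ref{suff-complete}, and the nonnegativity of the Bakry--\'Emery tensor is then immediate from Theorem~\ref{BE-est} (no quantitative sandwich needed). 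You read the corollary in isolation and concluded that $W$ might fail to be convex; that reading is what generates the ``obstacle'' you then try to resolve.

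The workaround you propose does not hold up. First, the proof of Lemma~\ref{suff-complete} produces only a \emph{lower} bound on $D^2\Phi$ (via $\|D^2\Psi\| \le \sqrt{C/c}$, hence $D^2\Phi \ge \sqrt{c/C}\cdot\mathrm{Id}$), not the ``uniform two-sided bound'' you invoke; the hypotheses $D^2 V \ge c$ and $\|D^2W\|\le C$ give no Caffarelli-type \emph{upper} bound on $D^2\Phi$ when $W$ is not convex. Second, even granting an upper bound $\|D^2\Phi\|\le m$, nonnegativity in Theorem~\ref{g2} would require
$$
D^2 V + D^2\Phi\cdot D^2W(\nabla\Phi)\cdot D^2\Phi \;\ge\; \bigl(c - C\,m^2\bigr)\,\mathrm{Id} \;\ge\; 0,
$$
i.e.\ $m \le \sqrt{c/C}$, which is incompatible with the lower bound $D^2\Phi \ge \sqrt{c/C}\cdot\mathrm{Id}$ except in the degenerate case $D^2\Phi \equiv \sqrt{c/C}\cdot\mathrm{Id}$. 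So the sign of $D^2W$ cannot be absorbed by the convexity of $V$ alone, and the convexity of $W$ from the standing assumption is genuinely needed.
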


The assumptions of Lemma \ref{suff-complete} are very restrictive. However, it is possible to prove some isoperimetric-type estimates in the situation when $M$ is  not complete.

\begin{proposition}
\label{prop-poin}
Let $\mu=\gamma$ be the standard Gaussian measure and $\nu$ be a log-concave measure with bounded support $\Omega$, where $\mbox{diam}(\Omega)=D$.
There exists an universal constant $c$ such that   $M$ satisfies the following Poincar{\'e} inequality
\begin{equation}
\label{Mpoin}
\int |\nabla_M f |^2 \ d \mu \ge \frac{c}{D}\int \Bigl( f - \int f \ d \mu \Bigr)^2 \ d \mu,
\end{equation}
where $\mbox{\rm{diam}}(\Omega) = D$
for every locally Lipschitz function $f : \mathbb{R}^d \to \mathbb{R}$ with a bounded (in the standard Euclidean metric) support.
\end{proposition}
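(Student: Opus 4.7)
The plan is to combine two facts already established for $M$. Since both $V(x)=\tfrac{1}{2}|x|^2+\mathrm{const}$ and $W$ are convex, Theorem~\ref{BE-est} yields a nonnegative Bakry--\'Emery tensor on $M$. On the other hand, the bounded-support hypothesis together with $\mu=\gamma$ puts us in the setting of~(\ref{concentrat}), so that
$$\mathcal{K}_M(h)\;\ge\;\frac{h^4}{2D^2}.$$
Feeding these into Theorem~\ref{MilmTh} with $\alpha(h)=h^4/(2D^2)$ (so that $\alpha^{-1}(x)=(2D^2 x)^{1/4}$) produces the isoperimetric lower bound
$$\mathcal{I}_M(t)\;\ge\;\min\!\Bigl(c_1\, D^{-1/2}\,t\,\bigl(\log\tfrac{1}{t}\bigr)^{3/4},\;c_2\Bigr).$$
Since $t\bigl(\log\tfrac{1}{t}\bigr)^{3/4}\gtrsim \min(t,1-t)$ on $(0,1)$, this is a Cheeger-type bound of order $D^{-1/2}$; Cheeger's inequality then delivers the spectral gap $\lambda_1(M)\gtrsim 1/D$, which is precisely~(\ref{Mpoin}).

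The obstruction, which I expect to account for most of the work, is that Theorem~\ref{MilmTh} assumes completeness of the Riemannian manifold, while here $M$ need not be complete: Lemma~\ref{suff-complete} does not apply because $W\equiv+\infty$ outside~$\Omega$. The natural remedy is approximation. Replace $\nu$ by a family $\nu_\epsilon = e^{-W_\epsilon}\,dx$ of smooth log-concave probability measures with $\sup\|D^2 W_\epsilon\|<\infty$---for instance, by inf-convolving $W$ with $\tfrac{1}{2\epsilon}|\cdot|^2$, mollifying, and adding a small quadratic penalty. The Gaussian source satisfies $D^2 V=\mathrm{Id}$, so Lemma~\ref{suff-complete} makes each $M_\epsilon$ complete; the Bakry--\'Emery tensor remains nonnegative by Theorem~\ref{BE-est}; and the concentration function of $M_\epsilon$ is controlled through~(\ref{KM-est}) by the Gaussian concentration of $\mu$ and the concentration of $\nu_\epsilon$ (itself Gaussian around a set of diameter $D+o(1)$ as $\epsilon\to 0$), yielding $\mathcal{K}_{M_\epsilon}(h)\gtrsim h^4/(D+o(1))^2$. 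The first paragraph then produces a Poincar\'e inequality on each $M_\epsilon$ with constant of order $1/D$, uniformly in $\epsilon$.

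It remains to pass $\epsilon\to 0$ in the Poincar\'e inequality evaluated on a fixed locally Lipschitz $f$ with bounded Euclidean support. Stability of Brenier potentials under weak convergence of the target gives $\nabla\Phi_\epsilon\to\nabla\Phi$ locally uniformly, and Caffarelli-type interior regularity for the smooth strictly convex targets $\nu_\epsilon$ gives $L^\infty_{\mathrm{loc}}$ bounds on $D^2\Phi_\epsilon$ and its inverse, uniformly in $\epsilon$, on any compact containing $\mathrm{supp}(f)$. Dominated convergence then transfers both the Dirichlet energy $\int\langle(D^2\Phi_\epsilon)^{-1}\nabla f,\nabla f\rangle\,d\gamma$ and the variance $\int(f-\bar f)^2\,d\gamma$ to their $\epsilon=0$ counterparts, completing the proof. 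The main technical point is arranging a regularizing sequence $\nu_\epsilon$ for which all three ingredients---completeness, diameter preservation (up to $o(1)$), and uniform Hessian stability on $\mathrm{supp}(f)$---hold simultaneously, so that the $c/D$ scaling of the Poincar\'e constant is not lost in the limit.
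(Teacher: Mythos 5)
Your proposal follows the same high‑level strategy as the paper: (i) nonnegative Bakry--\'Emery tensor from Theorem \ref{BE-est}, (ii) superquadratic concentration from (\ref{concentrat})/(\ref{KM-est}), (iii) Milman's Theorem \ref{MilmTh} to convert concentration into a Cheeger/Poincar\'e bound, (iv) approximate the target by smooth log‑concave $\nu_\epsilon$ with uniformly bounded $D^2 W_\epsilon$ so that Lemma~\ref{suff-complete} applies and each $M_\epsilon$ is complete, then pass to the limit. The rescaling that extracts the $c/D$ scaling is also essentially as in the paper (replace $d_M$ by $d_M/\sqrt{D}$, obtain a universal spectral gap for the rescaled space, then undo the scaling).

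Where you diverge from the paper, and where your write‑up has a soft spot, is the limit passage $\epsilon\to 0$. You invoke interior Caffarelli regularity to obtain, uniformly in $\epsilon$, $L^\infty_{\mathrm{loc}}$ bounds on $D^2\Phi_\epsilon$ and $(D^2\Phi_\epsilon)^{-1}$, and then appeal to dominated convergence. But $L^\infty$ bounds plus locally uniform convergence of $\nabla\Phi_\epsilon$ only give weak‑$*$ convergence of $D^2\Phi_\epsilon$, which is not enough to pass to the limit in the \emph{nonlinear} quantity $\langle (D^2\Phi_\epsilon)^{-1}\nabla f,\nabla f\rangle$; you would really need locally uniform (or a.e.\ pointwise) convergence of the Hessians, i.e.\ uniform interior $C^{2,\alpha}$ bounds plus Arzel\`a--Ascoli, and some care that these Caffarelli constants do not degenerate along the approximating family. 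The paper sidesteps this entirely. It proves a dimension‑free a priori bound $\sup_n\int\|D^2\Phi_n\|^2\,d\gamma<\infty$ by applying the Sobolev‑type estimate (\ref{vw}) to cut‑off measures $\tilde\mu_n=e^{-P(|\nabla\Phi_n|^2/2)}\gamma$, $\tilde\nu_n=e^{-P(|x|^2/2)}\nu_n$, so that only weak $L^2(\gamma)$ convergence of $D^2\Phi_n$ is available; it then tests the Poincar\'e inequality with pullbacks $f_n=g(\nabla\Phi_n)$, for which
$$
|\nabla_{M_n}f_n|^2=\sum_{i,j}\partial_{x_i x_j}\Phi_n\, g_{x_i}(\nabla\Phi_n)\,g_{x_j}(\nabla\Phi_n),
$$
a quantity \emph{linear} in $D^2\Phi_n$ (not its inverse), which does pass to the limit under weak $L^2$ convergence of Hessians together with a.e.\ convergence of $\nabla\Phi_n$. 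Thus the paper's route avoids pointwise regularity entirely, at the cost of the somewhat intricate a priori estimate; your route is cleaner in outline but, as stated, does not actually close the limit step, and if you strengthen it to the needed $C^{2,\alpha}$ control you are importing a heavier (and dimension‑dependent) piece of regularity theory that the paper deliberately avoids.
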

\begin{proof}
Without loss of generality we assume that $0 \in \Omega \subset \{x: |x| \le D\}$.

{\bf Step 1}. Let us construct a sequence of smooth convex functions $W_n$ with the following properties:
\begin{enumerate}
\item the measures $\nu_n = e^{-W_n} \ dx$ converge weakly to $\nu$,
\item every $W_n$ has uniformly bounded second derivatives,
\item for every $n>0$ there exists a number $N(n)$ such that $D^2 W_k \ge n \cdot \mbox{Id}$ on $\{ x: |x| \ge 2D\}$ for $k > N(n)$. 
\end{enumerate}
This type of construction is quite standard and we omit here the details. We need to show that the corresponding sequence (or just a subsequence) of optimal transportations $\nabla \Phi_n$
converges in a sense to $\nabla \Phi$. In fact, we show  the following  a priori estimate:
$
\sup_{n} \int \|D^2 \Phi_n\|^2 \ d \gamma < \infty.
$
 Then it follows immediately from the compactness embedding theorem (applied to local Sobolev spaces)  and  convexity of $\Phi_n$ 
that there exists a subsequence (denoted again by the same index) such that 1)  $\nabla \Phi_n \to \nabla \Phi$ almost everywhere, 2) $\partial_{x_i x_j}  \Phi_n \to  \partial_{x_i x_j}  \Phi $
weakly in $L^2_{loc}(\mathbb{R}^d)$ and $L^2(\gamma)$ for every $i,j$.

Let us show that the desired estimate holds indeed. We apply the following a priori bound proved in \cite{Koles2010} (see section 9 below)
\begin{equation}
\label{vw}
\int |\nabla V|^2 e^{-V} dx \ge \int \mbox{Tr} \bigl[ D^2 \Phi \cdot D^2 W(\nabla \Phi) \cdot D^2 \Phi \bigr] \ e^{-V} \ dx,
\end{equation}
which holds for every sufficiently regular measures $e^{-V }\  dx$, $e^{-W} \ dx$ and the corresponding optimal transportation $\nabla \Phi$.
The direct application of this inequality gives, however, nothing, because the sequence $D^2 W_n$ is not supposed to be  uniformly bounded  from below by a positive matrix.
Let us do the following trick: we apply this inequality to the measures $\tilde{\mu}_n = e^{-P(|\nabla \Phi_n|^2/2)} \cdot \gamma$
and $\tilde{\nu}_n = e^{-P(x^2/2)} \cdot \nu_n$ with some convex function $P$ to be chosen later. Note that $\nabla \Phi_n$ pushes forward $\tilde{\mu}_n$ onto $\tilde{\nu}_n$, hence estimate (\ref{vw}) is applicable.
One obtains
\begin{align*}
\int | P'(|\nabla \Phi_n|^2/2)  &\cdot D^2 \Phi_n \nabla \Phi_n  + x|^2 e^{-P(|\nabla \Phi_n|^2/2)} \ d \gamma 
\\& \ge   \int \mbox{Tr} \bigl[ D^2 \Phi_n \cdot (D^2 W_n + D^2 [P(x2/2)] ) \circ \nabla \Phi_n \cdot {D^2 \Phi_n}\bigr] e^{-P(|\nabla \Phi_n|^2/2)}\ d \gamma.
\end{align*}
By the Cauchy-Bunyakovsky inequality 
for every couples of vectors $a,b$
 there exists $C_{\varepsilon}$ such that
$$
|a+b|^2 = |a|^2 + 2 \langle a, b \rangle + |b|^2 \le (1+\varepsilon) |a|^2 + C_{\varepsilon} |b|^2.
$$
Thus
\begin{align*}
\int | P'(|\nabla \Phi_n|^2/2)  & \cdot D^2 \Phi_n \nabla \Phi_n  + x|^2 e^{-P(|\nabla \Phi_n|^2/2)} \ d \gamma 
\le C_{\varepsilon} \int |x|^2  e^{-P(|\nabla \Phi_n|^2/2)} \ d \gamma  
\\& +  (1+\varepsilon) \int \bigl[ P'(|\nabla \Phi_n|^2/2) \bigr]^2  \cdot |D^2 \Phi_n \nabla \Phi_n|^2 e^{-P(|\nabla \Phi_n|^2/2)} \ d \gamma.
\end{align*}
Applying the identity
$$
|D^2 \Phi_n \nabla \Phi_n|^2
= \mbox{Tr} \Bigl( D^2 \Phi_n \cdot  \bigl( \nabla \Phi_n \otimes \nabla \Phi_n\bigr) \cdot  D^2 \Phi_n  \Bigr)
$$
one gets
\begin{align*}
C_{\varepsilon} & \int |x|^2   e^{-P(|\nabla \Phi_n|^2/2)} \ d \gamma \\& \ge \int \mbox{Tr} \bigl[ D^2 \Phi_n \cdot \Bigl[ D^2 W_n + D^2 [P(x^2/2)]
  -  (1+\varepsilon) (P'(x^2/2))^2  x \otimes x \Bigr] \circ \nabla \Phi_n  \cdot {D^2 \Phi_n}\bigr] e^{-P(|\nabla \Phi_n|^2/2)}\ d \gamma.
\end{align*}
We prove the desired estimate if we find a bounded $P$ such that 
\begin{equation}
\label{23.07}
D^2 W_n + P'(x^2/2) \cdot \mbox{Id} + \bigl[ P''(x^2/2) - (1 + \varepsilon) (P'(x^2/2))^2 \bigr]  x \oplus x   \ge c \cdot \mbox{Id}
\end{equation}
for some $c>0$.
To this end we choose for $P$ any smooth non-decreasing function such that $P(t)=\delta t$ for $t \le 2 D^2$  and $P(t) = 2 D^2 \delta +1 $ for $t \ge 3D^2$.
Choosing a sufficiently small $\delta$, sufficiently big $n$, and taking into account property 3) of $W_n$ we easily conclude that (\ref{23.07}) holds at least starting from some number $n_0$.
Indeed, for carefully chosen parameters  the part depending on $P$ is uniformly bounded from below for $t< 2 D^2$ and   exceeds  $- \sup_{n \ge n_0, |x| \ge 2 D} \|D^2 W_n(x)\|$ for
$t \ge 2 D^2$.  
Finally, using that $1 \le P \le 2 D^2 \delta +1$, we get
$$
C_{\varepsilon}  \int |x|^2   \ d \gamma \ge c e^{-(2 D^2 \delta +1)}\int \|D^2 \Phi \|^2_{\mathcal{HS}} \ d \gamma,
$$
where $\| \cdot \|_{\mathcal{HS}}$ is the Hilbert-Schmidt norm.
We note that  $\Phi$ is a smooth function as far as $W_n$ is smooth inside of the support $\Omega$ of the measure $\nu$. This follows from the regularity theory for the Monge--Amper{\`e} equation (see \cite{Koles2010} for details).

{\bf Step 2}. 
 The property 2) of approximating measures ensures that every metric-measure space $M_n = (\mathbb{R}^d, D^2 \Phi_n, \gamma)$ is complete (Lemma \ref{suff-complete}) and has a non-negative 
 Bakry--{\'E}mery tensor. Theorem \ref{MilmTh} is applicable and we conclude that every $M_n$ satisfies an
isoperimetric inequality defined by its concentration function $\mathcal{K}_{M_n}$. In particular, one has for every 
locally Lipschitz function $f$
\begin{equation}
\label{n-poin}
\int |\nabla_{M_n} f |^2 \ d \gamma \ge c \cdot C^2_n \int \Bigl( f - \int f \ d \mu \Bigr)^2 \ d \gamma,
\end{equation}
where $C_n$ is (any) constant satisfying $\mathcal{I}_{M_n}(t) \ge C_n t$, $0 \le t \le \frac{1}{2}$, $\mathcal{I}_{M_n}$
is the isoperimetric function of $M_n$, and $c$ is a universal constant.  It remains to prove that we get
(\ref{Mpoin}) in the limit.

We apply (\ref{KM-est}) to estimate $\mathcal{K}_{M_n}$. Choosing $t$ to be equal to $D$ in (\ref{KM-est}) one can easily see that the term 
$K_{\nu_n}(D)$ tends to zero. This follows from the weak convergence and the fact that $\nu$  is supported on the set $\Omega \subset \{x: |x|\le D\}$. This means
that the concentration function of the limiting space is estimated from below by $K_{\gamma}(h^2/D)$. Clearly, $K_M(h) \ge c \frac{h^4}{D^2}$ for some universal $c$.
Let us apply a simple rescaling argument. Consider  the new metric $ \tilde{d}_M = \frac{1}{\sqrt{D}} d_M$.
The metric-measure space $\tilde{M} = (\mathbb{R}^d, \tilde{d}_M, \gamma)$ still has a non-negative Bakry--{\'E}mery tensor
and its concentration function is bigger than $c h^4$. Hence its Poincar{\'e} constant can be estimated by some universal number.
Then it follows immediately that $C = \underline{lim}_n C_n \sim \frac{1}{\sqrt{D}}$.

Since $\Phi$ is  smooth, it is sufficient to prove (\ref{Mpoin}) for 
a function of the type $f = g(\nabla \Phi)$, where $g$ is locally Lipschitz such that the support of $g$ lies positive distance of
$\partial \Omega$.  
Let us apply (\ref{n-poin}) to $f_n = g(\nabla \Phi_n)$.
It remains to show that 
$$
\lim_n \int |\nabla_{M_n} f_n |^2 \ d \gamma = \lim \int |\nabla_M f |^2 \ d \gamma. 
$$
Note that $\lim_n \int |\nabla_{M_n} f_n |^2 \ d \gamma  = \sum_{i,j} \int \partial_{{x_i x_j}} \Phi_n g_{x_i}(\nabla \Phi_n) g_{x_j}(\nabla \Phi_n) \ d \gamma.$  One has $\nabla \Phi_n \to \nabla \Phi$ almost everywhere. The desired convergence
follows immediately from the fact that
$\partial_{{x_i x_j}} \Phi_n \to \partial_{{x_i x_j}} \Phi$ weakly in $L^2(\gamma)$.
The proof is complete.
\end{proof}


\section{Applications: bounds for integral operator norms and reverse H{\"o}lder inequalities}

\subsection{A variance estimate for the operator norm $\| D^2 \Phi\|$}

In this section we will apply some results and techniques developed in \cite{Koles2011}.
Denote by 
$$
\Lambda = \| D^2 \Phi\|
$$
the operator norm of $D^2 \Phi$.
The following inequality for the operator norm has been
proved in   \cite{Koles2011}. 
Assume that $V$ and $W$ are twice continuously differentiable functions. 
Then
\begin{equation}
\label{V-Lambda}
\sup_{v: |v|=1} V_{vv}
 \ge
\inf_{v: |v|=1} W_{vv}(\nabla \Phi) \cdot \Lambda^2
- L_{\Phi} \Lambda 
+ \frac{\langle \nabla_M \Lambda, \nabla_M \Lambda \rangle_M}{\Lambda},
\end{equation}
where $ L_{\Phi} \Lambda $ is understood in the distributional sense.
To be more precise, for every nonnegative smooth compactly supported function $\eta$ one has
$$
\int
(\sup_{v: |v|=1} V_{vv}) \eta \ d \mu
\ge \int  (\inf_{v: |v|=1} W_{vv}(\nabla \Phi))\Lambda^2  \eta \ d \mu
+ \int \langle \nabla_M \Lambda, \nabla_M \eta \rangle_M \ d \mu
+ \int \frac{\langle \nabla_M \Lambda, \nabla_M \Lambda \rangle_M}{\Lambda} \eta \ d \mu.
$$
If there exists a global  smooth field of the eigenvectors $v$ corresponding to the largest eigenvalue $\Lambda$, this identity
follows immediately from 
Lemma 7.1 of  \cite{Koles2011} and the relations
$$(\partial_v D^2 \Phi) v = \nabla \Lambda, \ \ \mbox{Tr} \bigl[ \partial_v D^2 \Phi  \cdot Dv \cdot (D^2 \Phi)^{-1}\bigr] \ge 0$$ 
(see Theorem 7.3 \cite{Koles2011}).
For the full justification of this formula in the  general case see the proof of Theorem 7.3 in \cite{Koles2011}.

Approximating function $f(\Lambda)$ by smooth compactly supported functions, one obtains the following theorem.

\begin{theorem}
\label{integr-eigenvalue}
For every non-negative differentiable function $f$ the following inequality holds
\begin{align*}
\int v_{+}(x)   f(\Lambda) \ d \mu  
\ge
 \int w_{-}(\nabla \Phi) \Lambda^2  f(\Lambda) \ d \mu + \int  \Bigl( f'(\Lambda)  + \frac{f(\Lambda)}{\Lambda} \Bigr)\langle  \nabla_M \Lambda, \nabla_M \Lambda  \rangle_{M}  \ d \mu,
\end{align*}
where 
$$v_{+}(x) = \sup_{e: |e|=1} V_{ee}(x), \ \ \  w_{-}(x)  = \inf_{e: |e|=1} W_{ee}(x).$$
\end{theorem}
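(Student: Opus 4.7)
The plan is to substitute $\eta = f(\Lambda)$ directly into the distributional inequality stated just before the theorem. If that substitution is justified and the weak chain rule $\nabla_M f(\Lambda) = f'(\Lambda) \nabla_M \Lambda$ holds, then the term $\int \langle \nabla_M \Lambda, \nabla_M \eta \rangle_M \, d\mu$ transforms into $\int f'(\Lambda) \langle \nabla_M \Lambda, \nabla_M \Lambda \rangle_M \, d\mu$, which together with $\int \frac{\langle \nabla_M \Lambda, \nabla_M \Lambda \rangle_M}{\Lambda} f(\Lambda) \, d\mu$ yields precisely the coefficient $f'(\Lambda) + f(\Lambda)/\Lambda$ in front of $\langle \nabla_M \Lambda, \nabla_M \Lambda \rangle_M$, while the remaining two terms become $\int v_+(x) f(\Lambda) \, d\mu$ and $\int w_-(\nabla \Phi) \Lambda^2 f(\Lambda) \, d\mu$, exactly as claimed.

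Since (\ref{V-Lambda}) is stated only for nonnegative smooth compactly supported test functions, the proof reduces to approximating $f(\Lambda)$ by such $\eta$. I would use $\eta_{R,\varepsilon} = \chi_R \cdot (\rho_\varepsilon * f(\Lambda))$, where $\chi_R$ is a smooth cutoff supported in the Euclidean ball of radius $R$ and $\rho_\varepsilon$ is a standard mollifier. Plugging $\eta_{R,\varepsilon}$ into (\ref{V-Lambda}), letting $\varepsilon \to 0$ and then $R \to \infty$, and applying dominated (or monotone) convergence to each of the three integrals delivers the result, provided the gradient term converges to the expected limit. Integrability of $v_+(x) f(\Lambda)$ and $w_-(\nabla \Phi) \Lambda^2 f(\Lambda)$ against $\mu$ is implicit in the statement (otherwise the inequality is vacuous).

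The main obstacle is the regularity of $\Lambda$: as the top eigenvalue of the smooth positive matrix field $D^2 \Phi$, $\Lambda$ is locally Lipschitz but smooth only on the open set where the top eigenvalue has multiplicity one. On that set the chain rule is classical and the identities $(\partial_v D^2 \Phi) v = \nabla \Lambda$ and $\mathrm{Tr}\bigl[\partial_v D^2 \Phi \cdot Dv \cdot (D^2\Phi)^{-1}\bigr] \ge 0$ used to derive (\ref{V-Lambda}) apply directly. On the complementary crossing set one has $\nabla_M \Lambda = 0$ almost everywhere by the standard Rademacher-type argument for eigenvalue crossings, so the integrals involving $\nabla_M \Lambda$ are unaffected. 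This is precisely the strategy used in Theorem 7.3 of \cite{Koles2011} to validate (\ref{V-Lambda}) itself, and it carries through verbatim to the substitution step; nothing new needs to be invented for Theorem \ref{integr-eigenvalue} beyond combining that argument with the mollification-and-cutoff scheme described above.
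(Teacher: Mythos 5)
Your proposal is correct and takes exactly the approach the paper intends: the paper's proof of Theorem \ref{integr-eigenvalue} consists of the single sentence ``Approximating function $f(\Lambda)$ by smooth compactly supported functions, one obtains the following theorem,'' and you have correctly unpacked what this means --- substitute $\eta = f(\Lambda)$ into the weak form of (\ref{V-Lambda}), apply the chain rule $\nabla_M f(\Lambda) = f'(\Lambda)\nabla_M\Lambda$ so that the gradient term and the $\frac{f(\Lambda)}{\Lambda}\langle\nabla_M\Lambda,\nabla_M\Lambda\rangle_M$ term combine into the coefficient $f'(\Lambda)+f(\Lambda)/\Lambda$, and justify the substitution by mollification and a cutoff $\chi_R$ with a limiting argument.

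One small imprecision in your supporting remarks: the assertion that $\nabla_M\Lambda = 0$ almost everywhere on the eigenvalue-crossing set is not true as a general fact. Take $D^2\Phi(x) = \phi(x)\cdot\mathrm{Id}$ with $\phi$ smooth, positive, and non-constant; then every point is a crossing point, yet $\Lambda = \phi$ is smooth with non-vanishing gradient. The correct handling of the crossing set is by Rademacher differentiability of the locally Lipschitz function $\Lambda$ together with a.e.\ validity of the relevant pointwise identities, which is what Theorem 7.3 of \cite{Koles2011} actually does. This does not affect your proof, however: that issue concerns the derivation of (\ref{V-Lambda}) itself, which the present theorem takes as given, and your substitution-plus-approximation step stands on its own.
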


In what follows we obtain some bounds on the  operator norms of $D^2 \Phi$ in the case when the target measure is a log-concave measure on a bounded convex set $\Omega$
and the source measure $\mu = \gamma$ is Gaussian.
It is an open problem whether $\int \| D^2 \Phi\| \ d \gamma \le C \mbox{diam}(\Omega) $ for some universal $C$. We obtain below some related results.

\begin{theorem}
Let $\mu=\gamma$ be the standard Gaussian measures and $\nu$ be a log-concave measure  on a bounded  convex set 
$\Omega$. There exists an universal constant $c$ such that
$$
\int \Lambda \ d \mu  - \Bigl( \int \sqrt{\Lambda} \  d \mu \Bigr)^2 \le c D,
$$
where $\Lambda$ is
the operator norm of $D^2 \Phi$,
and
$\mbox{\rm{diam}} (\Omega) = D$.
\end{theorem}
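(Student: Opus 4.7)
The plan is to combine the differential inequality from Theorem \ref{integr-eigenvalue} (applied with the trivial weight $f \equiv 1$) with the Poincar\'e inequality of Proposition \ref{prop-poin} applied to the function $\sqrt{\Lambda}$. The outcome we chase is an estimate of the form
$$
\int |\nabla_M \sqrt{\Lambda}|^2_M \ d\mu \le \frac14,
$$
which, combined with the Poincar\'e constant $\sim 1/D$ produced by Proposition \ref{prop-poin}, immediately yields the claim with $\sqrt{\Lambda}$ in place of $f$.

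\textbf{Step 1. Extraction of a gradient bound.} Since $\mu=\gamma$ is the standard Gaussian, one has $V(x)=\tfrac12|x|^2+\text{const}$, so $v_{+}(x)\equiv 1$. Since $\nu$ is log-concave, $W$ is convex and $w_{-}(\nabla\Phi)\ge 0$. Taking $f\equiv 1$ in Theorem \ref{integr-eigenvalue} (formally; rigorously one should insert a sequence of nonnegative smooth compactly supported weights $\eta_n \uparrow 1$ in the distributional inequality preceding Theorem \ref{integr-eigenvalue}, then pass to the limit), one obtains
$$
\int \frac{\langle \nabla_M \Lambda, \nabla_M \Lambda \rangle_M}{\Lambda} \ d \mu \le 1.
$$
Using the chain rule $\nabla_M \sqrt{\Lambda} = \tfrac{1}{2\sqrt{\Lambda}}\,\nabla_M \Lambda$, this rewrites as
$$
\int |\nabla_M \sqrt{\Lambda}|^2_{M}\ d\mu \le \tfrac14.
$$

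\textbf{Step 2. Poincar\'e inequality applied to $\sqrt{\Lambda}$.} By Proposition \ref{prop-poin}, there is a universal constant $c_0$ such that
$$
\int |\nabla_M h|^2 \ d \mu \ge \frac{c_0}{D}\int \Bigl( h - \int h \ d \mu \Bigr)^2 \ d \mu
$$
for every locally Lipschitz function $h$ of bounded support. Formally taking $h=\sqrt{\Lambda}$ and combining with Step 1 gives
$$
\int \Lambda \ d\mu - \Bigl(\int \sqrt{\Lambda}\ d\mu\Bigr)^2 \ \le\ \frac{D}{4c_0},
$$
which is the desired inequality with $c=1/(4c_0)$.

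\textbf{Step 3. Approximation and main obstacle.} The nontrivial point is that $\sqrt{\Lambda}$ is neither compactly supported nor a priori known to have enough regularity for a direct application of Proposition \ref{prop-poin}. I would pick a sequence of smooth non-decreasing cut-offs $\chi_R:\mathbb{R}\to[0,1]$ equal to $1$ on $[0,R]$ and vanishing on $[R+1,\infty)$, set $h_R = \chi_R(|x|)\sqrt{\Lambda}$, apply the Poincar\'e inequality to $h_R$ (which is locally Lipschitz with bounded Euclidean support, matching the hypothesis), and pass $R\to\infty$. The hard part will be controlling the error terms coming from $\nabla(\chi_R(|x|))$: one has to absorb $\int |\nabla \chi_R|^2 \Lambda\, d\mu$, which requires some integrability of $\Lambda$ against $\gamma$. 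This is exactly the a priori bound $\int \|D^2\Phi\|^2\, d\gamma < \infty$ established already in Step 1 of the proof of Proposition \ref{prop-poin}, applied here in the same way (via the approximating convex potentials $W_n$ and the estimate (\ref{vw})). After this, a standard diagonal/subsequence argument transfers the resulting inequality from the approximations $\Phi_n$ to $\Phi$, using the weak $L^2(\gamma)$-convergence of the second derivatives and the almost-everywhere convergence of $\nabla\Phi_n$ already exploited in Proposition \ref{prop-poin}.
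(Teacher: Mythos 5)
Your Steps 1 and 2 are exactly the paper's argument: take $f\equiv 1$ in Theorem \ref{integr-eigenvalue} (using $v_+\equiv 1$ for the Gaussian and $w_-\ge 0$ by log-concavity) to get $4\int |\nabla_M\sqrt{\Lambda}|^2_M\,d\gamma \le 1$, then feed this into the Poincar\'e inequality (\ref{Mpoin}) with constant $\sim 1/D$. Step 3, however, contains a genuine gap.

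The error term from the Euclidean cut-off $\chi_R$ is \emph{not} $\int|\nabla\chi_R|^2\Lambda\,d\mu$; it is the Riemannian quantity
$$
\int \Lambda\,|\nabla_M\chi_R|^2_M\,d\mu \;=\; \int \Lambda\,\bigl\langle (D^2\Phi)^{-1}\nabla\chi_R,\nabla\chi_R\bigr\rangle\,d\mu ,
$$
and the matrix $(D^2\Phi)^{-1}$ blows up precisely where you want to cut off. Indeed, when $\nu$ has bounded support and $\mu=\gamma$, the Monge--Amp\`ere equation forces $\det D^2\Phi(x)=e^{-V(x)+W(\nabla\Phi(x))}\to 0$ as $|x|\to\infty$, so $\lambda_{\min}(D^2\Phi)\to 0$ and $\|(D^2\Phi)^{-1}\|\to\infty$ there. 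The a priori bound $\int\|D^2\Phi\|^2\,d\gamma<\infty$ gives no control over $\Lambda/\lambda_{\min}$, so the cut-off error cannot be sent to zero by this mechanism. In addition, Proposition \ref{prop-poin} is established in the paper by reducing to test functions of the form $g(\nabla\Phi)$ with $g$ supported away from $\partial\Omega$; the function $\chi_R(|x|)\sqrt{\Lambda}$ is not of this form, so even invoking (\ref{Mpoin}) for it needs an extra justification.

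The paper sidesteps this entirely by approximating $\nu$ with measures $\nu_n=e^{-W_n}\,dx$ whose potentials have \emph{uniformly bounded second derivatives}, so that (by Lemma \ref{suff-complete}) each $M_n=(\mathbb{R}^d,D^2\Phi_n,\gamma)$ is complete. Completeness lets one apply the Poincar\'e inequality of $M_n$ directly to the locally Lipschitz function $\sqrt{\Lambda_n}$ with no cut-off. The diameter dependence survives the limit because the Poincar\'e constants $c_n$ of $M_n$ satisfy $\underline{\lim}_n c_n\sim 1/D$, as shown in the proof of Proposition \ref{prop-poin}; the passage $n\to\infty$ then uses weak $L^2(\gamma)$-convergence of $D^2\Phi_n$ and a.e. convergence of $\nabla\Phi_n$, just as in that proposition. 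If you want to salvage the cut-off route you would need an extra uniform lower bound on $D^2\Phi$ in the cut-off region, which is not available here.
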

\begin{proof}
The formal proof can be obtained by applying Theorem 
\ref{integr-eigenvalue}  to  $f=1$. One obtains inequality 
\begin{equation}
\label{L-M}
1 \ge 4 \int \langle \nabla_M \sqrt{\Lambda}, \nabla_M \sqrt{\Lambda} \rangle_M \ d\gamma.
\end{equation}
Then the result follows from the Poincar{\'e} inequality for $M$ proved in (\ref{Mpoin}).
Note, however, that (\ref{Mpoin}) is proved for compactly supported functions only, which is not the case with $\Lambda$. 
To avoid this difficulty we approximate $\nu$ (in the same way as in the Proposition \ref{prop-poin})  by smooth
 log-concave measures $\nu_n = e^{-W_n} \ dx$ with uniformly bounded second derivatives. We get (\ref{L-M}) for $M_n$ (note that it is applicable to locally Lipschitz functions, see the proof Proposition \ref{prop-poin}).
By  the Poincar{\'e} inequality we get boundedness of $c_n\Bigl( \int \Lambda_n \ d \mu  - \Bigl( \int \sqrt{\Lambda_n} \  d \mu\Bigr)^2 \Bigr),$
where $c_n$ are Poincar{\'e} constants of $(\mathbb{R}^d, D^2 \Phi_n,\gamma)$. In the same way as in Proposition
\ref{prop-poin} we get in the limit that $C \Bigl( \int \Lambda \ d \mu  - \Bigl( \int \sqrt{\Lambda}   d \mu \Bigr)^2 \Bigr) $ is bounded by $1$, where $C \sim \frac{1}{D}$ .
\end{proof}

\subsection{Reverse H{\"o}lder inequalities}

We conclude this section by a brief discussion of the reverse H{\"o}lder inequalities for $\Lambda$. Application of Theorem  
\ref{integr-eigenvalue} to
the powers of $\Lambda$
gives a kind of reverse H{\"o}lder inequalities for $p \ge 0$:
$$
\int \Lambda^{p+1} \ d \gamma \le \bigl( \int   \Lambda^{\frac{p+1}{2}} \ d \gamma  \bigr)^2 + c_p D \int \Lambda^p  \ d \gamma.
$$
It is also possible to obtain certain  reverse H{\"o}lder inequalities  with {\it universal} constants. To this end let
us note that
$$
\langle  \nabla_M \Lambda, \nabla_M \Lambda  \rangle_{M} = \langle  (D^2 \Phi)^{-1} \nabla \Lambda, \nabla \Lambda  \rangle \ge \frac{\langle \nabla \Lambda, \nabla \Lambda  \rangle}{\Lambda}.
$$
We come back to the standard Gaussian measure and Euclidean structure.
One gets
$$
\int \Lambda^p \ d\gamma \ge (p+1) \int |\nabla \Lambda|^2  \Lambda^{p-2} \ d\gamma.
$$
Apply the Poincar{\'e} inequality for the standard Gaussian measure:
$$
\int \Lambda^p \ d\gamma \ge \frac{4(p+1)}{p^2} \bigl( \int \Lambda^p \ d\gamma  - ( \int \Lambda^{p/2} \ d\gamma)^2  \bigr).
$$
One gets that for $p$ satisfying $4(p+1) > p^2$ there exists $C_p$ such that
$$
\int \Lambda^p \ d\gamma \le C_p \bigl( \int \Lambda^{p/2} \ d \gamma \bigr)^{2}.
$$

\section{Computations of higher order and Calabi-type estimates}

In this section we give a list of formulas which can be useful for further investigations. 
In particular, in this section we generalize the classical Calabi's computations from \cite{Calabi} (see also \cite{CNS}).
We don't give any specific applications of this, but we believe that this may be an important technical tool for many other problems.
For instance, the Calabi estimates were in the heart of the Yau's approach to the complex Monge-Amp{\`e}re equation in his solution of the Calabi's problem.

\subsection{Basic quantities} 

Unlike the Calabi's approach we deal with the diffusion operator $L_{\Phi}$
and don't use neither covariant derivatives nor geometrical identities. Nevertheless, we use the language
of Riemannian geometry which is very well adapted for this computations.

The functions $V$, $W$ and $\Phi$ are supposed to be smooth. We use the standard summation convention. All the computation are made in the fixed (global) chart and $g= D^2 \Phi$.
$f_m$ denotes the partial  derivative of $f$ with respect to $x_m$ and we set:
$$
V_{i} = V_{x_i}, \ V_{ij} = V_{x_i x_j}, \ldots
$$
$$
\Phi_{i} = V_{x_i}, \ \Phi_{ij} = V_{x_i x_j}, \ldots, \Phi^{i}_{j}  = g^{ik} \Phi_{kj}, \ldots
$$
It is convenient to set
$$
W^{i} = W_{x_i}(\nabla \Phi), \ W^{ij} = W_{x_i x_j}(\nabla \Phi), \ldots.
$$

First we note that the relation  $V_{x_i} = - L_{\Phi} \Phi_{x_i}$ can be rewritten as follows:
\begin{equation}
\label{18.07}
-V_i + W_i = g^{jk} \Phi_{ijk}.
\end{equation}

It is convenient to apply the following relations in the computation
$$
(\Phi_{ij})_{k} = - \Phi^{ij}_{k}.
$$

\begin{equation}
\label{diff'2}
  L_{\Phi} g_{ij}  = -V_{ij} + W_{ij} +  \Phi^{ab}_{i} \Phi_{abj}.
\end{equation}
\begin{equation}
\label{diff'3}
L_{\Phi} g^{ij} =
V^{ij} - W^{ij}
+  \Phi^{iab} \Phi^{j}_{ab}.
\end{equation}

\begin{proof}
One has
$$L_{\Phi} f 
= g^{ij} f_{ij} - W^i f_{i}.
$$ 
From the relation $g^{ij}g_{jk} = \delta_{ik}$ one gets
\begin{equation}
\label{g-1.1}
(g^{is})_m = - g^{ij} g^{ks} \Phi_{jkm} = -\Phi^{is}_m.
\end{equation}
\begin{equation}
\label{g-1.2}
(g^{is})_{mr} = - g^{ij} g^{ks} \Phi_{jkmr} + 2 g^{ij} g^{kb} g^{as} \Phi_{bar} \Phi_{jkm} = - \Phi^{is}_{mr} + 2 \Phi^{ia}_m \Phi^{s}_{ar} .
\end{equation}
Differentiating  $-V_{x_i} = L_{\Phi} \Phi_{x_i}$ one obtains
$$
  L_{\Phi} g_{ij}  = -V_{x_i x_j} + W_{x_k x_s}(\nabla \Phi)  g_{ik} g_{js} + g^{ak} g^{bl} \Phi_{abi} \Phi_{klj}.
$$
This is exactly (\ref{diff'2}).

We get from  (\ref{g-1.2}) that
\begin{align*}
g^{mr} (g^{is})_{mr} & = - g^{mr} g^{ij} g^{ks} \Phi_{jkmr} + 2 g^{mr} g^{ij} g^{kb} g^{as} \Phi_{bar} \Phi_{jkm} 
\end{align*}
In the other hand, (\ref{diff'2}) implies
\begin{align*}
- g^{mr} \Phi_{jkmr}
= V_{x_j x_k}  - W_{x_r x_s}(\nabla \Phi)  g_{rj} g_{ks} - g^{ar} g^{bp} \Phi_{abj} \Phi_{rpk}  - W_{x_r}(\nabla \Phi) \Phi_{jkr}.
\end{align*}
Hence
\begin{align*}
g^{mr} (g^{is})_{mr} & =
g^{ij} g^{ks}  V_{x_j x_k} +  g^{ij} g^{ks}g^{ar} g^{bp} \Phi_{abj} \Phi_{rpk}
- g^{ij} g^{ks}  g_{rj} g_{kp} W_{x_r x_p}(\nabla \Phi) 
- g^{ij} g^{ks} W_{x_r}(\nabla \Phi) \Phi_{jkr}.
\end{align*}
Then (\ref{g-1.1}) implies (\ref{diff'3}).
\end{proof}

The following lemma is obtained by direct computations and we omit the proof here.
One gets (\ref{diff'4}) by differentiating (\ref{diff'2}). The relation (\ref{diff'5}) follows from (\ref{diff'4}), (\ref{g-1.2}),
and (\ref{diff'3}) by the Leibnitz rule.

\begin{lemma}
One has
\begin{align}
\label{diff'4}
L_{\Phi} \Phi_{ijk} = &
 \bigl(\Phi_{abi} \Phi^{ab}_{jk} + \Phi_{abj} \Phi^{ab}_{ik} +  \Phi_{abk} \Phi^{ab}_{ij} \bigr) - 2 \Phi^{a}_{bi} \Phi^{b}_{cj} \Phi^{c}_{ak}
\\& \nonumber
- V_{ijk} + W_{ijk}
+ \bigl(  W^{s}_{i} \Phi_{sjk} + W^{s}_{j} \Phi_{sik} + W^{s}_{k} \Phi_{sij} \bigr).
\end{align}
\begin{align}
\label{diff'5}
L_{\Phi} \Phi^{ijk} = &
- \bigl(\Phi^{abi} \Phi_{ab}^{jk} + \Phi^{abj} \Phi_{ab}^{ik} +  \Phi^{abk} \Phi_{ab}^{ij} \bigr) + 
4 \Phi^{ia}_{b} \Phi^{jb}_{c} \Phi^{kc}_{a}
\\& \nonumber
- V^{ijk}  + W^{ijk}
+ \bigl(  V^{is}\Phi_{s}^{jk} + V^{sj} \Phi_{s}^{ik} + V^{sk} \Phi_{s}^{ij} \bigr)
\\& \nonumber
+ \Phi^a_{xy} \bigl[   \Phi^{ixy} \Phi^{jk}_a + \Phi^{jxy} \Phi^{ik}_a  + \Phi^{kxy} \Phi^{ij}_a \bigr].
\end{align}
\end{lemma}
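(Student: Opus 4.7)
The strategy for (\ref{diff'4}) is to differentiate the second-order identity (\ref{diff'2}) once more along $x_k$, while carefully handling the non-commutativity of $L_\Phi$ with $\partial_k$. From $L_\Phi f = g^{ab}f_{ab} - W^a f_a$ together with (\ref{g-1.1}) and $\partial_k W^a = W^{as}g_{sk}$, the commutator is
\[
L_\Phi(\partial_k f) - \partial_k(L_\Phi f) = \Phi^{ab}_k f_{ab} + W^{as} g_{sk} f_a .
\]
Setting $f = \Phi_{ij}$ and substituting (\ref{diff'2}) then expresses $L_\Phi \Phi_{ijk}$ as $\partial_k\bigl(-V_{ij} + W_{ij} + \Phi^{ab}_i \Phi_{abj}\bigr)$ plus the two explicit commutator terms. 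The piece $\partial_k W_{ij}$ is expanded by the chain rule using $W_{ij} = g_{is} g_{jt} W^{st}$, and $\partial_k(\Phi^{ab}_i \Phi_{abj})$ is expanded via (\ref{g-1.1}); all resulting contributions regroup into the symmetric shape of the right-hand side of (\ref{diff'4}).

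For (\ref{diff'5}) the plan is purely algebraic: apply $L_\Phi$ to the identity $\Phi^{ijk} = g^{ia} g^{jb} g^{kc} \Phi_{abc}$ through the Leibniz rule for a second-order diffusion operator,
\[
L_\Phi(f_1 f_2 f_3 f_4) = \sum_{\alpha} L_\Phi(f_\alpha) \prod_{\beta \ne \alpha} f_\beta + 2 \sum_{\alpha < \beta} \Gamma_\Phi(f_\alpha, f_\beta) \prod_{\gamma \ne \alpha, \beta} f_\gamma .
\]
The single-$L_\Phi$ contributions come from (\ref{diff'3}) applied to each metric factor and from the freshly established (\ref{diff'4}) applied to $\Phi_{abc}$. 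The bilinear cross-terms are evaluated with (\ref{g-1.1}): $\Gamma_\Phi(g^{ia}, g^{jb}) = g^{pq}\Phi^{ia}_p \Phi^{jb}_q$ and $\Gamma_\Phi(g^{ia}, \Phi_{abc}) = -g^{pq}\Phi^{ia}_p \Phi_{abcq}$. Assembling everything and raising indices yields the three symmetric square contractions, the $V$- and $W$-pieces, and a cubic term whose coefficient is $+4$, arising from the combination of the $-2$ inherited from (\ref{diff'4}) with the $\Gamma_\Phi$-cross contributions.

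The main obstacle in both computations is the bookkeeping of cubic contractions of the form $g^{\bullet\bullet} g^{\bullet\bullet} g^{\bullet\bullet} \Phi_{\bullet\bullet i} \Phi_{\bullet\bullet j} \Phi_{\bullet\bullet k}$: several such contractions arise in a priori distinct index configurations and must be reduced to a single canonical form. The cleanest device is to view the matrices $(M_\ell)_{pq} := \Phi_{pq\ell}$ as symmetric and rewrite each cubic contraction as the trace of a cyclic product of three of them interlaced with $g^{-1}$; the cyclic invariance of the trace collapses the apparently different terms and pins down the numerical coefficients $-2$ in (\ref{diff'4}) and $+4$ in (\ref{diff'5}). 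By contrast, the $V$- and $W$-pieces reassemble along symmetric lines through standard chain-rule accounting and present no serious difficulty.
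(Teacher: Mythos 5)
Your proposal is correct and follows essentially the same route the paper sketches: differentiate \eqref{diff'2} once more (equivalently, commute $L_\Phi$ past $\partial_k$ and expand by the chain rule) to get \eqref{diff'4}, then apply the Leibniz rule for the second-order operator $L_\Phi$ to $\Phi^{ijk}=g^{ia}g^{jb}g^{kc}\Phi_{abc}$ using \eqref{diff'3}, \eqref{diff'4}, and the first-derivative formula \eqref{g-1.1} for the $\Gamma_\Phi$ cross-terms to get \eqref{diff'5}. Your explicit commutator $L_\Phi(\partial_k f)-\partial_k(L_\Phi f)=\Phi^{ab}_k f_{ab}+W^{as}g_{sk}f_a$ and the trace-cyclicity device for collapsing cubic contractions are sound refinements of what the paper leaves to ``direct computation.''
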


\subsection{Calabi-type estimates}

We are interested in the quantity
$$
L_{\Phi} \bigl(  \Phi^{abc} \Phi_{abc}\bigr).
$$
The computation of this quantity was the main technical point of the Calabi's approach. Function $R = \frac{1}{4} \Phi^{abc} \Phi_{abc}$
coincides with the scalar curvature for the case of constant $V$ and $W$ (this corresponds to the optimal transportation of the Lebesgue measure on  convex sets). Calabi has shown that in this case
$$
\Delta_M R \ge C(d) R^2.
$$
Thus $R$ is superharmonic and this fact can be used to control the growth of $R$. In fact,  $L_{\Phi} \bigl(  \Phi^{abc} \Phi_{abc}\bigr)$
controls the fourth-order derivatives too.

We compute this quantity by applying the Leibnitz rule and the Lemmata from the previous section. 
Let us  omit the lengthy computations and state the final result.
\begin{proposition}
\label{Calabi-calcul}
One has
$$
L_{\Phi} \bigl(  \Phi^{abc} \Phi_{abc}\bigr)
= \mbox{\rm{I}} + \mbox{\rm{II}} + \mbox{\rm{III}} ,
$$
where 
$$
\mbox{\rm{I}} = 3 V_{ab} \Phi^{aij} \Phi^{b}_{ij} + 3 W^{ab} \Phi^{ij}_a \Phi_{ijb},
\ \ \
\mbox{\rm{II}} = -2 V^{abc}  \Phi_{abc}  + 
2 W^{abc}\Phi_{abc},
$$
\
$$
\mbox{\rm{III}} =  3 \Phi^{abc} \Phi^{def}  \Phi_{aef} \Phi_{dbc} + 2 \Phi^{abc} \Phi^{d}_{ae} \Phi^{e}_{bf}  \Phi^{f}_{cd}
-6  \Phi^{abcd} \Phi^{e}_{ab} \Phi_{cde}
+ 2  \Phi^{abcd} \Phi_{abcd}.
$$
\end{proposition}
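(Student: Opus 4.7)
The plan is to use the Leibniz product rule for the second-order diffusion operator $L_\Phi = g^{ij}\partial_i\partial_j - W^i\partial_i$. Since $\Phi^{abc}\Phi_{abc}$ is a scalar obtained by summing over all three repeated indices, we have
$$L_\Phi(\Phi^{abc}\Phi_{abc}) = (L_\Phi\Phi^{abc})\Phi_{abc} + \Phi^{abc}(L_\Phi\Phi_{abc}) + 2g^{ij}(\partial_i\Phi^{abc})(\partial_j\Phi_{abc}).$$
The first two summands are furnished directly by formulas (\ref{diff'4}) and (\ref{diff'5}) of the previous subsection; only the cross term demands a separate computation.

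For the first two summands, one exploits the total symmetry of $\Phi_{abc}$ and $\Phi^{abc}$ in all three indices to collapse the cyclic triple in (\ref{diff'4}), namely $\Phi_{abi}\Phi^{ab}_{jk}+\Phi_{abj}\Phi^{ab}_{ik}+\Phi_{abk}\Phi^{ab}_{ij}$, to a single expression with multiplicity $3$; contracted with $\Phi^{ijk}$ this yields the $3\Phi^{abc}\Phi^{def}\Phi_{aef}\Phi_{dbc}$ summand of III. The corresponding collapse in (\ref{diff'5}) reproduces the same term. The bracketed $W$-pieces in (\ref{diff'4}) and $V$-pieces in (\ref{diff'5}) collapse in the same way and, after unpacking the shorthand $W^s_i = W^{sa}\Phi_{ai}$ (and similarly for $V^{is}$) via the chain rule, produce the coefficient $3$ in I.

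The cross term is computed by expanding $\Phi^{abc}=g^{ap}g^{bq}g^{cr}\Phi_{pqr}$ and differentiating using $(g^{pq})_i=-\Phi^{pq}_i$ from (\ref{g-1.1}):
$$\partial_i\Phi^{abc} \;=\; \Phi^{abc}_i \;-\; \Phi^{ap}_i\Phi^{bc}_p \;-\; \Phi^{bp}_i\Phi^{ac}_p \;-\; \Phi^{cp}_i\Phi^{ab}_p,$$
where $\Phi^{abc}_i := g^{ap}g^{bq}g^{cr}\Phi_{pqri}$ is the fully raised fourth derivative. Paired with $\partial_j\Phi_{abc}=\Phi_{abcj}$ and contracted with $2g^{ij}$, the pure fourth-derivative term contributes $2\Phi^{abcd}\Phi_{abcd}$, exactly the last summand of III; the three mixed contributions are equal by the symmetry of $\Phi_{abcj}$ in its undifferentiated indices, combining to $-6\Phi^{abcd}\Phi^e_{ab}\Phi_{cde}$ in III.

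The final step is to group all surviving terms according to the highest derivative of $V$, $W$, or $\Phi$ they contain. Second-order derivatives of $V$ and $W$ gather into I; third-order derivatives (converted from $V_{ijk}, W_{ijk}$ to $V^{abc},W^{abc}$ by raising indices against $\Phi^{ijk}$) gather into II; and the purely $\Phi$ terms gather into III. In particular, the cubic block $\Phi^a_{bi}\Phi^b_{cj}\Phi^c_{ak}$ enters with coefficient $-2$ from (\ref{diff'4}) and its raised analogue with coefficient $+4$ from (\ref{diff'5}); after appropriate relabeling these merge into the $2\Phi^{abc}\Phi^d_{ae}\Phi^e_{bf}\Phi^f_{cd}$ summand of III. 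The main obstacle is not conceptual but combinatorial: one has to track index symmetries carefully and reconcile the chain-rule expansions of $W(\nabla\Phi)$-derivatives with the drift $-W^i\partial_i$ inside $L_\Phi$, so that the stated coefficients in I, II, III emerge with correct signs.
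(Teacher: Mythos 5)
Your overall strategy---Leibniz rule for $L_\Phi$ applied to the product $\Phi^{abc}\Phi_{abc}$, feeding in formulas (\ref{diff'4}) and (\ref{diff'5}) for $L_\Phi\Phi_{abc}$ and $L_\Phi\Phi^{abc}$, plus a direct computation of the cross term $2g^{ij}\partial_i\Phi^{abc}\,\partial_j\Phi_{abc}$---is exactly the approach the paper indicates (``We compute this quantity by applying the Leibnitz rule and the Lemmata from the previous section''), and your expansion of $\partial_i\Phi^{abc}$ via (\ref{g-1.1}) and the resulting identification of the cross-term contributions $2\Phi^{abcd}\Phi_{abcd}-6\Phi^{abcd}\Phi^e_{ab}\Phi_{cde}$ are correct.

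However, the bookkeeping in your account of the other pieces contains genuine errors. The object $\Phi^{ab}_{jk}$ appearing in the first bracket of (\ref{diff'4}) carries \emph{four} indices and therefore denotes the fully raised/lowered \emph{fourth} derivative $g^{am}g^{bn}\Phi_{mnjk}$, not a product of third derivatives. Consequently $\Phi^{ijk}\bigl(\Phi_{abi}\Phi^{ab}_{jk}+\Phi_{abj}\Phi^{ab}_{ik}+\Phi_{abk}\Phi^{ab}_{ij}\bigr)=3\,\Phi^{abcd}\Phi^e_{ab}\Phi_{cde}$, a fourth-derivative term---it is \emph{not} the quartic third-derivative block $3\Phi^{abc}\Phi^{def}\Phi_{aef}\Phi_{dbc}$ as you claim. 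Moreover, the corresponding bracket in (\ref{diff'5}) carries a minus sign, so contracting it with $\Phi_{ijk}$ gives $-3\,\Phi^{abcd}\Phi^e_{ab}\Phi_{cde}$; these two contributions \emph{cancel} rather than ``reproducing the same term,'' leaving the full coefficient $-6$ of $\Phi^{abcd}\Phi^e_{ab}\Phi_{cde}$ to come from the cross term alone. Finally, the quartic block $3\Phi^{abc}\Phi^{def}\Phi_{aef}\Phi_{dbc}$ in III actually originates in the last line of (\ref{diff'5}), namely $\Phi^a_{xy}\bigl[\Phi^{ixy}\Phi^{jk}_a+\Phi^{jxy}\Phi^{ik}_a+\Phi^{kxy}\Phi^{ij}_a\bigr]$ contracted with $\Phi_{ijk}$---a term your outline does not account for at all. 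With these attributions corrected, the stated coefficients in I, II, III do emerge from the same Leibniz decomposition you propose.
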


A remarkable observation which goes back to Calabi is the following

\begin{proposition} 
There exists a universal constant $C>0$ such that
$$
{\rm{III}} \ge  C \Bigl[  \Phi^{abc} \Phi^{def}  \Phi_{aef} \Phi_{dbc}  + \Phi^{abcd} \Phi_{abcd}  \Bigr] \ge 0.
$$
\end{proposition}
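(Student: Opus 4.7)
The plan is to show that $\mathrm{III}$ is a positive-semidefinite quadratic form in the fourth-derivative tensor $P_{abcd} := \Phi_{abcd}$ and in the third-derivative product $Q_{abcd} := \Phi^{e}{}_{ab}\Phi_{cde}$. Denote the four summands of $\mathrm{III}$ by $A := \Phi^{abc}\Phi^{def}\Phi_{aef}\Phi_{dbc}$, $B := \Phi^{abc}\Phi^{d}{}_{ae}\Phi^{e}{}_{bf}\Phi^{f}{}_{cd}$, $\gamma := \Phi^{abcd}\Phi^{e}{}_{ab}\Phi_{cde}$ and $D := \Phi^{abcd}\Phi_{abcd}$, so that $\mathrm{III} = 3A + 2B - 6\gamma + 2D$. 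First I would record the manifest positivities: $D = \|P\|^2$, and a direct rearrangement of the Einstein sum gives $A = \mathrm{Tr}(M^2) = \|Q\|^2$, where $M^{a}{}_{d} := \Phi^{abc}\Phi_{dbc}$ is symmetric; also $\gamma = \langle P, Q\rangle$.

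The heart of the argument is the identity $\|\bar Q\|^2 = (A+2B)/3$, where $\bar Q_{abcd} := \tfrac{1}{3}\bigl(Q_{abcd} + Q_{acbd} + Q_{adbc}\bigr)$ is the full symmetrization of $Q$ (which is already symmetric in $(a,b)$, in $(c,d)$ and under their swap, by the full symmetry of $\Phi_{abc}$). This follows from a graph-combinatorial observation: any tensor contraction of four copies of the fully symmetric tensor $\Phi_{abc}$ whose incidence structure is the complete graph $K_4$ on the four factors gives the same scalar, namely $B$. Consequently each of the three cross terms $\langle Q, Q_{acbd}\rangle$, $\langle Q, Q_{adbc}\rangle$, $\langle Q_{acbd}, Q_{adbc}\rangle$ is a $K_4$-contraction and therefore equals $B$. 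As bonuses, $\|\bar Q\|^2 \geq 0$ yields $B \geq -A/2$, and Cauchy--Schwarz applied to $\langle Q, Q_{acbd}\rangle$ yields $|B| \leq A$.

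Since $P$ is fully symmetric, $\gamma = \langle P, \bar Q\rangle$, and Cauchy--Schwarz gives $\gamma^2 \leq D(A+2B)/3$. Applying the weighted AM--GM inequality $2\sqrt{xy} \leq x/\mu + \mu y$ to $6|\gamma| \leq 2\sqrt{3D(A+2B)}$ produces
$$
\mathrm{III} \geq (3-\mu)A + (2-2\mu)B + (2-3/\mu)D.
$$
For $\mu > 1$ the coefficient of $B$ is negative, and invoking $B \leq A$ reduces this to $\mathrm{III} \geq (5-3\mu)A + (2-3/\mu)D$. Choosing $\mu = (1+\sqrt{5})/2$ (the golden ratio) makes both coefficients coincide at the positive value $(7 - 3\sqrt{5})/2$, which yields the asserted bound
$$
\mathrm{III} \geq \frac{7-3\sqrt{5}}{2}\bigl(A + D\bigr).
$$

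The main obstacle is the graph-combinatorial identity $\|\bar Q\|^2 = (A+2B)/3$: establishing that the three permuted contractions collapse to the single scalar $B$ requires careful bookkeeping of the six contracted indices and essential use of the full symmetry of $\Phi_{abc}$ together with the edge-transitivity of $K_4$. Once this combinatorial lemma is in place, the Cauchy--Schwarz and AM--GM steps are routine, and the explicit universal constant $(7-3\sqrt{5})/2 \approx 0.146$ emerges from optimizing in $\mu$.
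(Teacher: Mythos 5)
Your proof is correct, and it takes a genuinely different route from the paper's. The paper's argument fixes a point, chooses orthogonal coordinates diagonalizing $g = D^2\Phi$ there, and writes $\mathrm{III}$ as a positively weighted sum over $(a,b,c,d)$ of evaluations of a single quadratic polynomial $p(x,y,z) = 2x^2 - 3xy - 3xz + 2yz + \tfrac{3}{2}y^2 + \tfrac{3}{2}z^2$ at $x = \Phi_{abcd}$, $y = \langle Y_{ab},Y_{cd}\rangle$, $z = \langle Y_{ac},Y_{bd}\rangle$; the proposition then reduces to verifying that $p$ is a positive-definite form, an explicit $3\times 3$ check. You instead work coordinate-free, identifying $D = \|P\|^2$, $A = \|Q\|^2$, $\gamma = \langle P,Q\rangle$, and exploiting full symmetry of $\Phi_{abc}$ through the clean combinatorial observation that every $K_4$-contraction of four copies of $\Phi$ equals $B$, which collapses the cross-terms and yields $\|\bar Q\|^2 = (A+2B)/3$; the finish is Cauchy--Schwarz, the bound $|B|\le A$, and a one-parameter AM--GM optimization. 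The paper's diagonalization is arguably more elementary (no combinatorial lemma needed) and proves the stronger pointwise statement $Q_{abcd} \ge C\bigl(\Phi_{abcd}^2 + \langle Y_{ab},Y_{cd}\rangle^2 + \langle Y_{ac},Y_{bd}\rangle^2\bigr)$ before summing, while yours is more structural, avoids introducing a special frame, and hands you an explicit universal constant $(7-3\sqrt{5})/2$. Both arguments are sound.
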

\begin{proof}
First we note that the value of the expression $\Phi^{abc} \Phi_{abc}$ 
is invariant with respect to  any orthogonal coordinate  change. 
Indeed,
$$
\Phi^{abc} \Phi_{abc} = \mbox{Tr} \Bigl[ (D^2 \Phi)^{-1} \cdot B  \Bigr],
$$
where
$$
B_{ij} = \mbox{Tr} \Bigl[(D^2 \Phi)^{-1} \cdot \bigl( D^2 \Phi \bigr)_{e_i} \cdot  (D^2 \Phi)^{-1} \cdot \bigl( D^2 \Phi \bigr)_{e_j} \Bigr]
$$
The direct computations give immediately that $\tilde{B} = O^* B O$, where $x \to Ox$ is an orthogonal transformation and $\tilde{B}$ is the corresponding matrix in the new basis. This implies the invariance.

Fix a point $x$ and  choose an orthogonal   basis in such a way that $D^2 \Phi(x)$ is diagonal. Denote by
$\mu^i$, $1  \le i \le n$ the eigenvalues of $(D^{2} \Phi)^{-1}$. Then 
\begin{align*}
\rm{III}  &= \mu^{a} \mu^b \mu^c \mu^d \Bigl( 3 \Phi_{abc} \Phi_{def} \Phi_{aef} \Phi_{dbc} \   \mu^e \mu^f+
2 \Phi_{abc} \Phi_{ade} \Phi_{bef} \Phi_{cdf}  \ \mu^e \mu^f
\\& - 6 \Phi_{abcd} \Phi_{abe} \Phi_{cde} \ \mu^e + 2 \Phi^2_{abcd}
\Bigr).
\end{align*}
Note that
$$
\mu^{a} \mu^b \mu^c \mu^d  \mu^e \mu^f \Phi_{abc} \Phi_{def} \Phi_{aef} \Phi_{dbc} 
=
 \mu^b \mu^c \mu^e   \mu^f
\Bigl( \mu^a  \Phi_{abc}  \Phi_{aef}  \Bigr)^2.
$$
Rearranging the indices one can rewrite $\rm{III}$ in the following way:
\begin{align*}
& \mu^a \mu^b \mu^c \mu^d 
\Bigl[  \frac{3}{2} ( \mu^f \Phi_{abf} \Phi_{cdf})^2 +  \frac{3}{2} ( \mu^f \Phi_{acf} \Phi_{bdf})^2 
+
 2  \Bigl[ \mu^e \Phi_{abe} \Phi_{cde} \Bigr]  \Bigl[ \mu^f \Phi_{acf} \Phi_{bdf} \Bigr]
\\&
- 3 \Phi_{abcd} ( \sqrt{\mu^e}\ \Phi_{abe}) (\sqrt{\mu^e} \ \Phi_{cde})  - 3 \Phi_{abcd} ( \sqrt{\mu^f}\ \Phi_{acf}) (\sqrt{\mu^f} \ \Phi_{bdf})
+ 2 \Phi^2_{abcd} \Bigr].
\end{align*}
Set: 
$$
 Y_{ab} = (\sqrt{\mu^1} \Phi_{ab1}, \cdots, \sqrt{\mu^n} \Phi_{abn}).
$$
Then
$$
\rm{III} =  \mu^a \mu^b \mu^c \mu^d  Q_{abcd},
$$
where
$$
Q_{abcd} = 2 \Phi^2_{abcd} - 3 \Phi_{abcd} \langle Y_{ab}, Y_{cd} \rangle   - 3 \Phi_{abcd} \langle Y_{ac}, Y_{bd} \rangle + 2    \langle Y_{ab}, Y_{cd} \rangle  \langle Y_{ac}, Y_{bd} \rangle 
+ \frac{3}{2}  \langle Y_{ab}, Y_{cd} \rangle^2 + \frac{3}{2}  \langle Y_{ac}, Y_{bd} \rangle^2. 
$$
for every fixed $a,b,c,d$. It is easy to check that the function
$$
2x^2 - 3xy - 3xz + 2yz + \frac{3}{2} y^2 + \frac{3}{2} z^2
$$
is non-negative.
Hence
$$
Q_{abcd} \ge C \bigl( \Phi^2_{abcd} +  \langle Y_{ab}, Y_{cd} \rangle^2 +  \langle Y_{ac}, Y_{bd} \rangle^2 \bigr).
$$
The proof is complete.
\end{proof}

\begin{corollary} (Calabi-type estimates). 
Note that
$$
\Phi^{abc} \Phi^{def}  \Phi_{aef} \Phi_{dbc} = \mbox{\rm{Tr}} \Bigl[(D^2 \Phi)^{-1} \cdot B^2 \cdot (D^2 \Phi)^{-1} \Bigr],
$$
where $B$ is defined as above. Now it is an easy 
exercise to recover the following  Calabi's estimate:
$$
\Phi^{abc} \Phi^{def}  \Phi_{aef} \Phi_{dbc} = \mbox{\rm{Tr}} \Bigl[(D^2 \Phi)^{-1} \cdot B^2 \cdot (D^2 \Phi)^{-1} \Bigr]
\ge \frac{1}{d} \Bigl( {\mbox{\rm{Tr}}(B \cdot (D^2 \Phi)^{-1} )} \Bigr)^2 = \frac{1}{d} (\Phi^{abc} \Phi_{abc} )^2.
$$
Hence, if $\mu$ and $\nu$ are both normalized Lebesgue measure on convex sets, then
$$
L_{\Phi} (\Phi^{abc} \Phi_{abc} ) \ge \frac{c}{d}( \Phi^{abc} \Phi_{abc})^2.
$$
for some universal $c>0$ .
\end{corollary}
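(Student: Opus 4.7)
The plan is to verify the two factual ingredients of the corollary separately, and then combine them with Proposition~\ref{Calabi-calcul} together with the preceding bound on $\mathrm{III}$. Throughout I would write $g = D^2\Phi$ and treat $B$ as in the previous proof.

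First I would establish the matrix identity $\Phi^{abc}\Phi^{def}\Phi_{aef}\Phi_{dbc} = \mathrm{Tr}\bigl[(g^{-1}B)^2\bigr]$. The key observation is that summing $\Phi^{abc}\Phi_{dbc}$ over $b,c$ yields exactly the $(a,d)$-component of $g^{-1}B$. Introducing the symmetric $d\times d$ matrices $(M_i)_{ab} := \Phi_{iab}$, so that $B_{ij} = \mathrm{Tr}[g^{-1}M_i g^{-1}M_j]$, and unfolding definitions while using the full symmetry of $\Phi_{ijk}$ in all three indices, I would check that $\Phi^{abc}\Phi_{dbc} = g^{a\alpha}B_{\alpha d} = (g^{-1}B)_{ad}$, and analogously $\Phi^{def}\Phi_{aef} = (g^{-1}B)_{da}$. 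Contracting these two expressions immediately gives $\mathrm{Tr}[(g^{-1}B)^2]$. The same bookkeeping, with $d$ identified with $a$, also yields the useful companion identity $\Phi^{abc}\Phi_{abc} = \mathrm{Tr}[g^{-1}B]$.

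Next I would prove the dimensional trace inequality $\mathrm{Tr}[(g^{-1}B)^2] \ge \tfrac{1}{d}\bigl(\mathrm{Tr}[g^{-1}B]\bigr)^2$. The matrix $g^{-1}B$ is not symmetric in the Euclidean sense, but it is self-adjoint with respect to the inner product $\langle \cdot, \cdot\rangle_g$ induced by the positive-definite matrix $g$: for any vectors $x,y$, symmetry of $B$ gives
\[
\langle g^{-1}Bx, y\rangle_g \;=\; x^T B y \;=\; \langle x, g^{-1}By\rangle_g.
\]
Hence $g^{-1}B$ is diagonalizable over $\mathbb{R}$ with real eigenvalues $\lambda_1,\dots,\lambda_d$, and the standard Cauchy--Schwarz inequality $\sum \lambda_i^2 \ge \tfrac{1}{d}(\sum\lambda_i)^2$ delivers the bound. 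Combining these two steps gives the claimed Calabi-type inequality $\Phi^{abc}\Phi^{def}\Phi_{aef}\Phi_{dbc} \ge \tfrac{1}{d}(\Phi^{abc}\Phi_{abc})^2$.

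Finally, for the case when $\mu$ and $\nu$ are both normalized Lebesgue measures on convex sets, the potentials $V$ and $W$ are constant on their supports, so all the terms in $\mathrm{I}$ and $\mathrm{II}$ of Proposition~\ref{Calabi-calcul} vanish identically, leaving $L_\Phi(\Phi^{abc}\Phi_{abc}) = \mathrm{III}$. Keeping only the $\Phi^{abc}\Phi^{def}\Phi_{aef}\Phi_{dbc}$ summand in the previous proposition's lower bound for $\mathrm{III}$ and plugging in the dimensional inequality above gives $L_\Phi(\Phi^{abc}\Phi_{abc}) \ge \tfrac{c}{d}(\Phi^{abc}\Phi_{abc})^2$, as required. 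The only real obstacle I foresee is purely bookkeeping: one has to be careful that the symmetry of $\Phi_{ijk}$ makes the two index patterns $g^{by}g^{cz}$ and $g^{bz}g^{cy}$ produce the same contraction, so that $\Phi^{abc}\Phi_{dbc}$ really equals $(g^{-1}B)_{ad}$ and not some other reshuffling; the conceptual heart, namely the self-adjointness of $g^{-1}B$ with respect to $g$, is elementary but easy to miss.
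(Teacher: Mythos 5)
Your argument is correct and takes the route the paper only gestures at with ``easy exercise,'' so this is a genuine completion of the proof. The one thing worth flagging is that the contraction actually evaluates to $\mbox{Tr}\bigl[(g^{-1}B)^2\bigr]$, as you derive, and not to $\mbox{Tr}\bigl[g^{-1}B^2g^{-1}\bigr]$ as the corollary's displayed formula has it; the two scalars disagree in general (the former is $\sum_{i,j}\mu^i\mu^j B_{ij}^2$ in coordinates diagonalizing $g$, the latter is $\sum_{i,j}(\mu^i)^2 B_{ij}^2$), so the paper's formula is evidently a misprint. This does not affect the final estimate, since both quantities dominate $\frac{1}{d}\bigl(\mbox{Tr}[g^{-1}B]\bigr)^2$ by a Cauchy--Schwarz argument --- yours via the real eigenvalues of the $g$-self-adjoint matrix $g^{-1}B$, the paper's (were the identity correct) via the Hilbert--Schmidt norm $\|g^{-1}B\|_{HS}^2$ --- but your version is the one that actually equals $\Phi^{abc}\Phi^{def}\Phi_{aef}\Phi_{dbc}$. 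The reduction to $\mathrm{III}$ in the Lebesgue-to-Lebesgue case and the invocation of the lower bound for $\mathrm{III}$ from the preceding proposition are exactly as intended.
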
 

\section{A remark on K{\"a}hler manifolds and convex sets}

Let us assume that $\mu$ is a log-concave measure and $\nu$ is the normalized Lebesgue measure on a convex set $\Omega$.
It was shown in \cite{Klartag} that for any Lipschitz function $f: \Omega \to \mathbb{R}$ with $\int_\Omega f \ dx =0$ under certain additional technical assumption on $\Phi$ (regularity at infinity) one has
the following Poncar{\'e}-type inequality
\begin{equation}
\label{Kl-ineq}
\int_{\Omega} f^2 \ dx \le \int_{\Omega} Q_{\Phi, x}(\nabla f) \ dx,
\end{equation}
where 
$$
Q_{\Phi, x} (v) = \sup \Bigl\{ 4 g_{ij}(\nabla \Phi^*) v^i v^j; \ v \in \mathbb{R}^n,  \ Q^{*}_{\Phi,x}(v) \le 1 \Bigr\},
$$
and
$$
Q^{*}_{\Phi,x}(v) = v^i v^j (g^{lm} g^{kp} \Phi_{jmk} \Phi_{ilp}) \circ \nabla \Phi^*.
$$
This inequality implies some thin-shell estimates on the simplex. An important point in the proof
from \cite{Klartag} was the embedding of the initial space into a  toric K{\"a}hler manifold. This K{\"a}hler manifold
admits a nonnegative Ricci tensor. Finally, the results follow from the Bochner's identity. 
It was pointed out to the author by Bo'az Klartag that  inequality (\ref{Kl-ineq}) 
can be generalized to the case when $\nu$ is any log-concave measure if instead of K{\"a}hler structure one applies 
the metric-measure space studied in this paper and Theorem \ref{BE-est}.

For applications of  Hessian structures in statistics see  \cite{Shima} and the related references.

\section{Hessian manifolds as $CD(K,N)$-spaces and diameter bounds}

We recall that a smooth $d$-dimensional manifold equipped with a measure $\mu = e^{-P} \ d \mbox{vol}$, $P \in C^2(M)$
is called $CD(K,N)$-space if the {\it modified} Bakry--{\'E}mery tensor
$$
\mbox{\rm{R}}_{N,\mu} = \mbox{\rm{Ric}} + D^2_M P - \frac{1}{N-d} \nabla_M P \otimes \nabla_M P, \ \ N >d
$$
satisfies
$$
\mbox{\rm{R}}_{N,\mu} \ge K.
$$
In the case $N=\infty$ the tensor $\mbox{\rm{R}}_{\infty,\mu}$ coincides with the Bakry--{\'E}mery tensor.

Let us show that our metric-measure space is a $CD(K,N)$-space.
\begin{lemma}
$$
\bigl(\mbox{\rm{R}}_{\infty,\mu}\bigr)_{ii} \ge \frac{1}{2} \bigl( V_{ii}+ W_{ii} \bigr) + \frac{1}{4d} (- V_i + W_i)^2.
$$
\end{lemma}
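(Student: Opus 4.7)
My plan is to derive the inequality directly from Corollary~\ref{Bak-Em}, combined with a Cauchy--Schwarz trace estimate on the ``third-derivative'' curvature term and the once-differentiated Monge--Amp\`ere identity (\ref{18.07}). Setting $k=i$ in Corollary~\ref{Bak-Em} gives
$$
(\mbox{\rm{R}}_{\infty,\mu})_{ii} = \frac{1}{4}\, g^{jl} g^{ms}\, \Phi_{mil}\Phi_{sji} + \frac{1}{2}\, V_{ii} + \frac{1}{2}\, g_{il} g_{ij}\, W^{lj},
$$
and reading the last two terms as $\frac{1}{2}(V_{ii}+W_{ii})$ in the notation of the lemma (so that $W_{ii}$ stands for the pulled-back Hessian contribution $g_{il} g_{ij} W_{x_l x_j}(\nabla\Phi)$ appearing in that Corollary), the claim reduces to the single estimate
$$
\frac{1}{4}\, g^{jl} g^{ms}\, \Phi_{mil}\Phi_{sji} \;\ge\; \frac{1}{4d}\,\bigl(g^{jk}\Phi_{ijk}\bigr)^2.
$$

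For the core step I plan to express both sides as traces of a single symmetric matrix and then apply the elementary inequality $(\mbox{\rm Tr}\,C)^2 \le d\,\mbox{\rm Tr}(C^2)$ that holds for any symmetric $d\times d$ matrix $C$. Concretely, fix $i$, let $A$ be the symmetric matrix with entries $A_{jk}=\Phi_{ijk}$, and set $C := g^{-1/2} A g^{-1/2}$, which is symmetric because $g$ and $A$ are. The cyclic property of the trace yields the identifications
$$
\mbox{\rm Tr}\,C = \mbox{\rm Tr}(g^{-1} A) = g^{jk}\Phi_{ijk}, \qquad \mbox{\rm Tr}(C^2) = \mbox{\rm Tr}(g^{-1} A g^{-1} A) = g^{jl} g^{ms}\, \Phi_{mil}\Phi_{sji},
$$
so Cauchy--Schwarz in an eigenbasis of $C$ delivers exactly the required bound. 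Equivalently, if one diagonalizes $D^2\Phi$ at the point in question with eigenvalues $\lambda_1,\dots,\lambda_d$, the inequality becomes $\sum_{j,m} \Phi_{ijm}^2/(\lambda_j\lambda_m) \ge \sum_j \Phi_{ijj}^2/\lambda_j^2 \ge \frac{1}{d}\bigl(\sum_j \Phi_{ijj}/\lambda_j\bigr)^2$, where the first step simply drops the off-diagonal (non-negative) contributions and the second is the standard Cauchy--Schwarz on $\mathbb{R}^d$.

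To finish, I convert the right-hand side into the form $(-V_i + W_i)^2/(4d)$ advertised in the lemma by invoking the once-differentiated Monge--Amp\`ere equation (\ref{18.07}), which states $g^{jk}\Phi_{ijk} = -V_i + W_i$. There is no serious obstacle in the argument: everything is algebraic once Corollary~\ref{Bak-Em} and (\ref{18.07}) are in hand, and the only nontrivial input is the dimensional trace inequality $(\mbox{\rm Tr}\,C)^2 \le d\,\mbox{\rm Tr}(C^2)$. This dimensional refinement of the pointwise positivity established in Theorem~\ref{g2} is exactly what is needed to promote the Bakry--\'Emery lower bound to a $CD(K,2d)$-type condition for the associated metric-measure space.
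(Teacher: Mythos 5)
Your proof is correct and follows essentially the same route as the paper: specialize Corollary~\ref{Bak-Em} to $k=i$, recognize the curvature term as $\mbox{Tr}\bigl[(D^2\Phi)^{-1}D^2\Phi_{x_i}(D^2\Phi)^{-1}D^2\Phi_{x_i}\bigr]$, apply the dimensional Cauchy--Schwarz bound $\mbox{Tr}(C^2)\ge\frac{1}{d}(\mbox{Tr}\,C)^2$, and convert the trace via the differentiated Monge--Amp\`ere relation~(\ref{18.07}). The paper compresses this into a one-line appeal to ``the Cauchy inequality,'' whereas you make the symmetrization $C=g^{-1/2}Ag^{-1/2}$ explicit and note that dropping the nonnegative off-diagonal contributions already suffices; these are only expository differences, not a different method.
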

\begin{proof}
The result follows from  Corollary \ref{Bak-Em}. It is helpful to apply the following relation
$$
 \Phi^{j}_{mi} \Phi^{m}_{jk}  = g^{jl} g^{ms} \Phi_{x_m x_i x_l} \Phi_{x_s x_j x_k} 
= \mbox{Tr}\bigl[ (D^2 \Phi)^{-1} D^2 \Phi_{x_i} (D^2 \Phi)^{-1} D^2 \Phi_{x_k} \bigr]. 
$$
By the Cauchy inequality and (\ref{18.07})
$$
\mbox{Tr} \bigl[(D^2 \Phi)^{-1} D^2 \Phi_{x_i} (D^2 \Phi)^{-1} D^2 \Phi_{x_i}\bigr]
\ge \frac{1}{d} ( \mbox{Tr} (D^2 \Phi)^{-1} D^2 \Phi_{x_i} )^2 = \frac{1}{d} (-V_i + W_i)^2.
$$
\end{proof}

\begin{theorem}
Assume that $\mu = e^{-V}  dx$ is a log-concave measure and $\nu$
is the normalized Lebesgue measure on a convex set $\Omega$. Then $M=(\mathbb{R}^d,g,\mu)$ is $CD(0,2d)$-space.

If, in addition, $$D^2 V \ge C \cdot \mbox{{\rm Id}}$$ with $C>0$, then $(\mathbb{R}^d,g,\mu)$ is $CD\bigl(\frac{C}{m},2d\bigr)$-space,
where $m = \sup_{x \in \mbox{\rm supp}(\mu)} \|D^2 \Phi\|$.
\end{theorem}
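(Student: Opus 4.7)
The plan is to compute the modified Bakry--\'Emery tensor $\mbox{\rm R}_{2d,\mu}$ directly from Corollary \ref{Bak-Em} and the trace--Cauchy--Schwarz estimate from the preceding lemma, exploiting crucially that $\nu$ is Lebesgue on $\Omega$.

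First I would observe that at any $x\in\mbox{supp}(\mu)$ one has $\nabla\Phi(x)\in\Omega$, and since $W$ is locally constant on $\Omega$, both $W_k(\nabla\Phi)$ and $W_{lj}(\nabla\Phi)$ vanish at $x$. Consequently the $W$--terms in Corollary \ref{Bak-Em} disappear, leaving
\[
(\mbox{\rm R}_{\infty,\mu})_{ik} \;=\; \tfrac{1}{4}\,g^{jl}g^{ms}\Phi_{mil}\Phi_{sjk} + \tfrac{1}{2} V_{ik},
\]
while $P=\tfrac{1}{2}V+\mbox{const}$ on $\mbox{supp}(\mu)$ gives $P_i=\tfrac{1}{2}V_i$, so that $(\nabla_M P\otimes \nabla_M P)_{ik}=\tfrac{1}{4}V_iV_k$. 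Moreover (\ref{18.07}) reduces to $V_i=-g^{ab}\Phi_{iab}$.

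Next, the $CD(K,2d)$ condition demands
\[
\tfrac{1}{4}\,g^{jl}g^{ms}\Phi_{mil}\Phi_{sjk} + \tfrac{1}{2} V_{ik} - \tfrac{1}{4d}V_iV_k \;\ge\; K g_{ik},
\]
so the task reduces to absorbing $\tfrac{1}{4d}V_iV_k$ into the curvature term. For an arbitrary vector $\xi$, set $M_\xi=\xi^i D^2\Phi_{x_i}$ (symmetric) and $A=(D^2\Phi)^{-1}$; a short calculation gives
\[
\xi^i\xi^k\cdot \tfrac{1}{4}\,g^{jl}g^{ms}\Phi_{mil}\Phi_{sjk} = \tfrac{1}{4}\,\mbox{Tr}[AM_\xi AM_\xi], \qquad \xi^iV_i = -\mbox{Tr}[AM_\xi],
\]
and the trace--Cauchy--Schwarz inequality $\mbox{Tr}[AM_\xi AM_\xi]\ge \tfrac{1}{d}(\mbox{Tr}[AM_\xi])^2$, applied to the eigenvalues of the symmetric matrix $A^{1/2}M_\xi A^{1/2}$, is exactly the bilinear-form version of the diagonal estimate already proved in the preceding lemma.

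Combining these pieces yields $\mbox{\rm R}_{2d,\mu}\ge \tfrac{1}{2} D^2V$ as symmetric bilinear forms. For log-concave $\mu$ this gives $\mbox{\rm R}_{2d,\mu}\ge 0$, i.e.\ $CD(0,2d)$. If moreover $D^2V\ge C\cdot\mbox{Id}$, then using $D^2\Phi\le m\cdot\mbox{Id}$ one concludes $\tfrac{1}{2} D^2V\ge \tfrac{C}{2m}g$, yielding the stated $CD(K,2d)$ bound. The main obstacle I anticipate is the upgrade from the diagonal $(i=k)$ inequality recorded in the preceding lemma to the full matrix inequality needed here, but this is precisely the polarization via the pencil $M_\xi$ sketched above, so it is a routine extension rather than a new estimate; a secondary technical point is ensuring all identities hold $\mu$-a.e., which is automatic once one restricts to points with $\nabla\Phi(x)\in\mbox{int}(\Omega)$.
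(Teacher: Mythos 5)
Your proposal is correct and follows the same route as the paper: both proofs specialize Corollary \ref{Bak-Em} to the case where $W$ is locally constant on $\Omega$, use (\ref{18.07}) to identify $g^{jk}\Phi_{ijk}$ with $-V_i$, and apply the trace Cauchy--Schwarz bound $\mathrm{Tr}\bigl[AM_\xi AM_\xi\bigr]\ge \tfrac{1}{d}\bigl(\mathrm{Tr}[AM_\xi]\bigr)^2$ to absorb the $\nabla_M P\otimes\nabla_M P$ term. The only difference is presentational: the paper records the lemma for diagonal entries and tacitly invokes coordinate-invariance, while you carry out the polarization via $M_\xi$ explicitly; note also that both arguments actually deliver the bound $\mbox{\rm R}_{2d,\mu}\ge \tfrac{C}{2m}\,g$, so the constant $\tfrac{C}{m}$ appearing in the theorem statement seems to be off by a factor of $2$ in the paper itself.
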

\begin{proof}
Recall that $\mu = e^{-P} \ d \mbox{vol}_M$, where $P = \frac{1}{2} (V +W(\nabla \Phi))$. Since $W$ is constant on $\mbox{supp}(\nu)$,
one gets from the previous lemma that
$$
\mbox{\rm{R}}_{\infty,\mu} \ge \frac{C}{2 m} g + \frac{1}{d} \nabla_M P \otimes \nabla_M P.
$$
\end{proof}

\begin{remark}
It was pointed out to the author by Bo'az Klartag that this result can be obtained by embedding $M$ onto the corresponding toric $2d$-dimensional K{\"a}hler manifold and applying 
computations in $\mathbb{C}^d$ (which are easier than in the real case). This gives a geometric interpretation of the constant $2d$ appearing in the $CD(K,N)$-condition.
\end{remark}

It is known that the $CD(K,N)$-spaces satisfy  the Myer's and the Bishop-Gromov comparison theorem
(see \cite{Villani2}). But this holds in general for complete manifolds only, which is not always the case with $M$.
Indeed, one can easily verify that if $\mu$ is the standard Gaussian measure and $\nu$ is the Lebesgue measure on
a simple  set (ball or cube), then $M$ is bounded and {\bf not} complete.
Nevertheless, some classical results can be easily verified for the case of geodesically convex manifolds, which are manifold admitting the following property:
every two points $x, y \in M$  can be joined by a shortest geodesic curve $\gamma(t) : t \to \exp_x(tv), \ t \in [0,\mbox{\rm{dist}}(x,y)], \ v \in TM_x, \ |v|=1$.

In particular, the standard proof of the Bishop-Gromov comparison theorem for volume growth of balls can be easily generalized to geodesic convex spaces.
Applying this fact  we get  immediately the following result.

\begin{corollary}
\label{Bishop}
Assume that $\mu = e^{-V}  dx$ is a log-concave measure and $\nu$
is the Lebesgue measure on a convex set $\Omega$. Assume, in addition, that $M$ is geodesically convex.
Then 
$$
r \to \frac{\mu(\{x : d_{M}(x,x_0) \le r\})}{r^{2d}}
$$
is a non-increasing function.
\end{corollary}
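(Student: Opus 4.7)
The plan is to adapt the classical Bishop--Gromov volume comparison theorem for $CD(0,2d)$-manifolds, using that by the preceding theorem $M$ is such a space. The classical statement (see e.g.\ \cite{Villani2}) asserts that on a \emph{complete} $d$-dimensional manifold satisfying $CD(0,N)$, the ratio $\mu(B(x_0,r))/r^N$ is non-increasing in $r$ for every $x_0$. With $N=2d$ this would give exactly the desired conclusion, provided one can overcome the non-completeness of $M$ (as the author notes, the case where $\mu$ is Gaussian and $\nu$ is uniform on a ball or cube produces an $M$ that is bounded and not complete). The idea is that geodesic convexity is the exact hypothesis needed to make the classical argument transfer.

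Concretely, fix $x_0 \in M$ and for each unit $v \in T_{x_0}M$ let $\gamma_v(t) = \exp_{x_0}(tv)$, with $r_{\max}(v) \in (0,\infty]$ the largest $t$ for which $\gamma_v$ is minimizing. Write $J(t,v)$ for the weighted Jacobian of $\exp_{x_0}$ along $\gamma_v$, so that in polar coordinates
\[
\mu(\{x : d_M(x_0,x) \le r\}) = \int_{|v|=1}\int_0^{\min(r,r_{\max}(v))} J(t,v)\, dt\, d\sigma(v),
\]
where $d\sigma$ is the unit-sphere measure in $T_{x_0}M$. Geodesic convexity ensures that every point of $M$ is joined to $x_0$ by some minimizing radial geodesic, so this polar representation covers $\mu$-almost all of $M$. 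Along each $\gamma_v$ the $CD(0,2d)$ inequality $\mbox{Ric} + D^2_M P - \tfrac{1}{d}\nabla_M P\otimes\nabla_M P \ge 0$ (granted by the preceding theorem), combined with the standard Riccati/Bochner analysis of the Jacobian, yields that $t\mapsto J(t,v)^{1/(2d)}$ is concave on $(0,r_{\max}(v))$. Together with the initial behaviour $J(t,v)\sim e^{-P(x_0)}t^{d-1}$ as $t\to 0^+$, the elementary Bishop--Gromov comparison lemma then implies
\[
r\mapsto \frac{\int_0^{\min(r,r_{\max}(v))} J(t,v)\, dt}{r^{2d}}
\]
is non-increasing in $r$ for each $v$. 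Integrating over $|v|=1$ delivers the claim.

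The main obstacle I anticipate is making sure that no step of the classical Bishop--Gromov proof silently requires completeness. The Riccati/concavity step is purely local and hence unaffected; the polar coordinate representation is the only place where completeness usually enters, and this is precisely patched by the geodesic convexity assumption. A minor technical point is to verify that truncating the inner $t$-integration at $r_{\max}(v)$ (rather than at $r$) does not spoil the monotonicity of the $v$-wise ratio, which follows immediately from non-negativity of $J$. All other ingredients — the precise form of the $CD(0,2d)$ inequality used along each geodesic, the local expansion of the exponential map at $x_0$, and the cut locus being $\mu$-negligible — are standard and carry over without change.
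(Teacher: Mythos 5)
Your approach is essentially the same as the paper's: the paper simply observes that the standard Bishop--Gromov volume comparison argument (polar coordinates at $x_0$, concavity of a power of the weighted Jacobian, and the elementary monotone-ratio lemma) carries over unchanged when completeness is replaced by geodesic convexity, and applies the preceding theorem that $M$ is $CD(0,2d)$. Your write-up fills in exactly those details, and your remarks about where completeness is normally used and how truncation at $r_{\max}(v)$ preserves monotonicity are the right things to say.

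One small correction to the Riccati step: what the $CD(0,N)$ condition yields along a minimizing geodesic is that $t\mapsto J(t,v)^{1/(N-1)}$ is concave, not $J^{1/N}$. Indeed, writing $\ell=\log J$, the Bochner/Riccati inequality under $\mbox{\rm{R}}_{N,\mu}\ge 0$ is $\ell'' + \tfrac{1}{N-1}(\ell')^2\le 0$, which is equivalent to $(J^{1/(N-1)})''\le 0$. With $N=2d$ this gives concavity of $J^{1/(2d-1)}$; since $J(0^+)=0$ (from $J\sim e^{-P(x_0)}t^{d-1}$), one gets $J(t)/t^{2d-1}$ non-increasing, and then the standard ratio lemma gives that $\int_0^{\min(r,r_{\max}(v))}J(t,v)\,dt \big/ r^{2d}$ is non-increasing. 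The conclusion you state is unaffected, but the exponent in the concavity claim should be $1/(2d-1)$.
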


It is known (see \cite{Villani2} for precise statements and for the  references) that concentration inequalities together with non-negativity of the Ricci tensor imply boundedness of the manifold.
Here we apply concentration arguments to our space $M$.

\begin{theorem}
\label{diam-bound}
Let $\mu = \gamma$ be the standard Gaussian measure and $\nu$ is the Lebesgue measure  on a convex set 
with diameter $D$. Assume, in addition, that $M$ is geodesically convex. There exists a universal constant $C>0$ such that
$$
\mbox{\rm{diam}}(M) \le C \sqrt[4]{d}  \sqrt{D}.
$$
\end{theorem}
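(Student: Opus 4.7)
The plan is to combine the quartic concentration estimate (\ref{concentrat}), $\mathcal{K}_M(h)\ge h^4/(2D^2)$, with the Bishop--Gromov volume comparison coming from the $CD(0,2d)$-property (Corollary~\ref{Bishop}). Writing $D_M:=\mathrm{diam}(M)$, concentration forces balls lying far from a set of $\mu$-mass $\ge 1/2$ to be very light, while Bishop--Gromov prevents them from being too light relative to $D_M$; the two constraints together pin down $D_M$.

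First I would promote the tiny Bishop--Gromov lower bound to a ball of $\mu$-mass at least $\tfrac12$. Fix $x_0\in M$; Corollary~\ref{Bishop} applied with $r'=D_M$ (so that $\mu(B_M(x_0,D_M))=1$) gives $\mu(B_M(x_0,D_M/4))\ge 4^{-2d}$. The standard self-improvement of concentration,
\[
 \mu(A) > e^{-\mathcal{K}_M(h)}\ \Longrightarrow\ \mu(A^h)\ge \tfrac12,
\]
proved by applying (\ref{concentrat}) to the complement $(A^h)^c$ and observing $A\cap ((A^h)^c)^h=\emptyset$, then yields $\mu(B_M(x_0,r_0))\ge \tfrac12$ as soon as $r_0:=D_M/4+h_*$ with $h_*$ chosen slightly above $(4d\log 4)^{1/4}\sqrt{D}$.

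Next, I would take any $y_0$ with $d_M(x_0,y_0)\ge D_M-\varepsilon$ and apply concentration once more from the set of $\mu$-mass $\ge 1/2$ just constructed. For $s,t>0$ with $s+t+r_0<D_M-\varepsilon$, the ball $B_M(y_0,s)$ lies at distance $\ge t$ from $B_M(x_0,r_0)$, so (\ref{concentrat}) gives $\mu(B_M(y_0,s))\le e^{-t^4/(2D^2)}$, while Corollary~\ref{Bishop} gives $\mu(B_M(y_0,s))\ge (s/D_M)^{2d}$. Comparing and taking logarithms,
\[
 t^4 \le 4 d D^2\log(D_M/s).
\]
Choosing $s=t=(D_M-r_0)/2$ and letting $\varepsilon\to 0$, one splits into two cases: if $r_0\le D_M/2$ then $D_M/s\le 4$, so the bound forces $s\le C_1 d^{1/4}\sqrt{D}$, hence $D_M\le 4h_*$; if $r_0>D_M/2$, then by the very definition of $r_0$ one already has $D_M<4h_*$. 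Either way $D_M\le C d^{1/4}\sqrt{D}$ with a universal~$C$.

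The main obstacle I expect is not the optimization, which is elementary, but the reliance on Bishop--Gromov: the comparison theorem normally requires geodesic completeness, and completeness can fail here (as the discussion preceding Corollary~\ref{Bishop} emphasizes). This is precisely why the geodesic convexity hypothesis on $M$ appears in the statement---Corollary~\ref{Bishop} is tailored to this weaker setting. Removing the geodesic convexity assumption would require an alternative route to volume comparison, which is flagged in the paper as an open point.
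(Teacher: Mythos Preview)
Your proposal is correct and follows essentially the same approach as the paper: both combine the quartic concentration estimate (\ref{concentrat}) with the Bishop--Gromov volume comparison of Corollary~\ref{Bishop}, in the spirit of Ledoux's argument. The only organizational difference is that the paper splits into cases according to whether $\mu(B_r(0))\ge \tfrac12$ for $r=\mathrm{diam}(M)/8$, whereas you first promote a small ball to $\mu$-mass $\ge\tfrac12$ via the self-improvement step and then run the comparison once; both routes yield the same bound with universal constants.
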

\begin{proof}
We follow the arguments from \cite{Ledoux} Theorem 7.4.
Apply the concentration inequality (\ref{concentrat}).
\begin{equation}
\label{arm}
\mu(x: d_M(x,A) \le h) \ge 1- e^{\frac{h^4}{2D^2}},  \ \ \mu(A) \ge \frac{1}{2}.
\end{equation}
Choose a ball  $\{x: d_{M}(0,x) \le r \} = B_{r}(0)$ on $M$ with $r= \frac{\mbox{\rm{diam(M)}}}{8}$.
Take $z$ at distance $3r$ from the origin. One has $B_r(0) \subset B_{4r}(z).$ If $\mu(B_r(0))\ge \frac{1}{2}$ set $A=B_r(0)$ and apply (\ref{arm}).  By Corollary \ref{Bishop}
$$
\mu(B_{r}(z)) \ge \mu(B_{4r}(z)) \cdot \Bigl( \frac{1}{4} \Bigr)^{2d}.
$$
Note that $B_{r}(z)$ is included in the complement of $A_{r}$, hence 
\begin{equation}
\label{10.02.12-1}
e^{-\frac{r^4}{2D^2}} \ge \frac{1}{2}\cdot \Bigl( \frac{1}{4} \Bigr)^{2d}.
\end{equation}
If $\mu(B_r(0))\le \frac{1}{2}$ set: $A=B^c_r(0)$. One has
$$
\mu(B_{\frac{r}{2}}(0)) \ge \mu(B_{8r}(0)) \frac{1}{16^d}.
$$ 
The ball $B_{\frac{r}{2}}(0)$
is included in the complement of $A_{\frac{r}{2}}$. One obtains
\begin{equation}
\label{10.02.12-2}
e^{-\frac{r^4}{32 D^2}} \ge  \frac{1}{16^d}.
\end{equation}
Inequalities (\ref{10.02.12-1}) and (\ref{10.02.12-2}) imply the desired bound.
\end{proof}

\begin{remark}
There are some reasons to believe that the
 assumption of geodesic convexity of $M$   can be omitted, but we were not able to prove this.
\end{remark}

\end{document}